\documentclass{article}
\usepackage[T1]{fontenc}
\usepackage{lmodern}  
\usepackage{cite}
\usepackage{amsmath}
\usepackage{appendix}
\usepackage{amsfonts}
\usepackage{amssymb}
\usepackage{graphicx}
\usepackage{xcolor}
\usepackage{mathabx}
\usepackage{geometry}
\usepackage{enumitem}
\usepackage{hyperref}
\usepackage{comment}
\usepackage{subcaption}
\usepackage{amsthm}
\newcommand{\Hi}{\RR^N}

\newcommand{\Id}{\mathrm{Id}}
\newcommand{\RR}{\mathbb{R}}
\newcommand{\prox}{\mathrm{prox}}
\newcommand{\g}{\mathrm{g}}
\newcommand{\dom}{\mathrm{dom}~}
\newcommand{\diam}{\mathrm{diam}}
\newcommand{\Fix}{\mathrm{Fix}~}

\newcommand{\vertiii}[1]{{\left\vert\kern-0.25ex\left\vert\kern-0.25ex\left\vert #1 
    \right\vert\kern-0.25ex\right\vert\kern-0.25ex\right\vert}}

\DeclareMathOperator*{\argmin}{arg\,min}

\newtheorem{lemma}{Lemma}
\newtheorem{definition}{Definition}
\newtheorem{proposition}{Proposition}
\newtheorem{theorem}{Theorem}
\newtheorem{remark}{Remark}
\newtheorem{assumption}{Assumption}

\title{Characterizations of inexact proximal operators}
\author{Guillaume Lauga, Samuel Vaiter}
\date{\today}

\begin{document}

\maketitle
\begin{abstract}
Proximal operators are now ubiquitous in non-smooth optimization. Since their introduction in the seminal work of Moreau, many papers have shown their effectiveness on a wide variety of problems, culminating in their use to construct convergent deep learning methods. The characterization of these operators for non-convex penalties was completed recently in \cite{gribonval2020characterization}. In this paper, we propose to follow this line of work by characterizing inexact proximal operators, thus providing an answer to what constitutes a good approximation of these operators.
We propose several definitions of approximations and discuss their regularity, approximation power, and their fixed points.
Equipped with these characterizations, we investigate the convergence of proximal algorithms in the presence of errors that may be non-summable and/or non-vanishing. In particular, we look at the proximal point algorithm, and at the forward-backward, Peaceman-Rachford and Douglas-Rachford algorithms when we minimize the sum of a weakly convex function (whose proximal operator is approximated) and a strongly convex function.

\end{abstract}
\section{Introduction}
Proximal operators are a central tool in modern non-smooth optimization. Since their introduction in the seminal work of Moreau \cite{moreau1962CR,moreau1963CR,moreau1965proximite}, many works have shown their effectiveness on a wide variety of problems \cite{bauschke2017,parikh2014proximal,gribonval2020characterization}, culminating today in their use to construct convergent deep learning methods \cite{hurault2022proximal,hurault2023relaxed,ryu2019plug,Reehorst2019REDClarifications}. The characterization of these operators for non-convex penalties was completed in \cite{gribonval2020characterization}. In this paper, we propose to follow these steps by characterizing inexact proximal operators, thus providing an answer to what constitutes a good approximation of these operators.

\paragraph{Context of the paper.} Our study sits in the context of optimization problems formulated as the minimization of the sum of two functions,
\begin{equation} \label{eq:optim}
    \widehat{x} \in \argmin_{x\in\Hi} f(x) + \phi(x).
\end{equation}
In typical imaging or machine learning applications, $f$ is a smooth data-fidelity term and $\phi$ is a typically non-smooth regularization term promoting certain properties on the solution (e.g., sparsity in a given basis \cite{chambolle2016introduction}). These regularizations are dealt with by operator splitting, computing optimization steps on $f$, and optimization steps on $\phi$. Optimization steps on $\phi$ are usually done with its proximal operator of parameter $\lambda>0$. A proximal step consists in finding for $y\in\Hi$,
\begin{equation} \label{eq:prox_operator}
    \prox_{\lambda \phi}(y) \in \argmin_{x \in \Hi} \Phi_\lambda(x) = \left\{ \phi(x) + \frac{1}{2\lambda} \Vert x - y \Vert^2 \right\}.
\end{equation}
Many splitting methods rely on proximal operators, since their first introduction in optimization by Martinet \cite{martinet1970regularisation,martinet1972determination} and Rockafellar \cite{rockafellar1976augmented,rockafellar1976monotone}. To cite the most common: proximal gradient descent or forward-backward (FB) splitting \cite{combettes2005signal,lions1979splitting,beck2009fast}, Douglas-Rachford (DR) splitting \cite{douglas1956numerical,lions1979splitting,eckstein1992douglas,bauschke2017}, alternating direction method of multipliers (ADMM) \cite{gabay1983applications,fortin1983decomposition,combettes2012primal,parikh2014proximal}, primal-dual (PD) algorithms \cite{chambolle2011first,vu2013splitting,condat2013primal}, and many others \cite{eckstein1989splitting,bauschke2017}. 

The computation of proximal operators involves a minimization problem which, for a variety of reasons, can introduce inexactness in the algorithms. %
If an explicit formulation of the proximal operator is available in a lot of cases \cite{parikh2014proximal,bauschke2017,chierchia_proximity}, most penalties are not "proximable", i.e., there does not exist an explicit formulation for their proximal operator (e.g., Total Variation (TV) based penalties \cite{beck2009fastTV,lauga2024iml}). Therefore, the computation is replaced by an estimation procedure. This procedure introduces an error that needs to be managed along the iterations in order to retain convergence of the proximal algorithm to a solution of \eqref{eq:optim} \cite{salzo2012inexact,villa2013accelerated,barre2023principled,schmidt2011convergence,gu2018inexact}. Such inaccuracies have been covered for the aforementioned algorithms for instance in \cite{salzo2012inexact,villa2013accelerated,barre2023principled,schmidt2011convergence,gu2018inexact} (FB and its accelerated versions), \cite{alves2020relative,svaiter2019weakly} (DR), \cite{alves2020relative,eckstein2017approximate} (ADMM), \cite{condat2013primal,rasch2020inexact} (PD). 

Other avenues of errors are quantization errors \cite{xu2019signprox}, a phenomenon occurring in distributed optimization \cite{elgabli2020q,zhu2016quantized,chen2012fast} or in online optimization \cite{dixit2019online,bastianello2021distributed,ajalloeian2020inexact}.  
A last avenue of errors can be referred to as \textit{learning errors}. Recent advances in the image restoration community have seen the replacement of an explicit regularizer $\phi$ by learned priors \cite{bredies2024learning,terris2021learning,hurault2022proximal,hurault2023relaxed,bredies2024learning,pesme2025map}, through the replacement of the minimizing operator (e.g., the proximal operator) associated to $\phi$ by off-the-shelf, built or learned, denoisers \cite{venkatakrishnan2013plug,hertrich2025learning}. In this context, learned denoiser aims at approximating some optimization operator related to a true functional that is not accessible. Hence, their use introduces an error in the optimization with respect to this true functional. The learned denoisers have replaced proximal operators in FB algorithms \cite{ryu2019plug,terris2021learning,hurault2022proximal,hurault2023relaxed,hurault2023convergent}, DR splitting \cite{hurault2022proximal,hurault2023relaxed}, ADMM algorithms \cite{venkatakrishnan2013plug,chan2016plug} and PD splittings \cite{ono2017primal}.

A question arises: \emph{can we still guarantee the convergence to a solution of the minimization problem \eqref{eq:optim}, when the proximal operator of $\phi$ is inexact and its error cannot be brought to zero?} 

\paragraph{Related works.} To provide an answer we need to understand how to model the errors. Several models have been proposed in the literature, we present them briefly below.

To model errors the authors of \cite{burachik1997enlargement,solodov2000error} adopt the point of view of monotone inclusions (find $x\in \Hi$ such that $0 \in T(x)$) and enlarge the maximal monotone operator $T$ (a monotone operator whose graph is maximal \cite[Definition 20.20]{bauschke2017}) by relaxing the monotonicity assumption with a parameter $\epsilon\geq 0$. We have
\begin{equation}
    T^\epsilon(x) = \{v \in \Hi \mid \forall y \in \Hi, u \in T(y), \langle v-u, x-y \rangle \geq - \epsilon\}.
\end{equation}
This definition includes the target operator $T= T^0$, and an approximate solution to the inclusion is given by
\begin{equation}
    \begin{cases}
   & v \in T^\epsilon(x) \\
   & \Vert \gamma v + y-x \Vert \text{ and } \epsilon \text{ are small},
    \end{cases}
\end{equation}
where $\gamma>0$ is a regularization parameter. Such modelling has direct connection with later works by taking $T = \partial \phi$ and $T^\epsilon = \partial_\epsilon \phi$ the convex approximate subdifferential \cite{brondsted1965subdifferentiability,hiriart-urruty1993convexII}, which we denote $\epsilon$-subdifferential $\partial_\epsilon \phi$. Indeed, several works have refined the approach of \cite{solodov2000error} for convex optimization (for instance \cite{monteiro2010convergence}).

Building on this enlargement of the subdifferential, two natural approximations of the proximal operators of convex penalties arise: one by finding a $0$ of the $\epsilon$-subdifferential of the proximal optimization problem \eqref{eq:prox_operator} \cite{rockafellar1976monotone,auslender1987numerical,guler1992new,cominetti1997coupling,salzo2012inexact},
\begin{equation}
    \text{Find } z \text{ such that } 0 \in \partial_{\epsilon/\lambda} \Phi_\lambda(z),
\end{equation}
and one by finding a element of the $\epsilon$-subdifferential of $\phi$,
\begin{equation}
    \text{Find } z \text{ such that }\frac{y-z}{\lambda} \in \partial_{\epsilon/\lambda} \phi(z).
\end{equation}
This last approximation allows authors of \cite{salzo2012inexact,villa2013accelerated} to construct a procedure to obtain and qualify an approximation, 
and to build inexact proximal gradient and accelerated inexact proximal gradient algorithms to solve non-smooth convex optimization problems \cite{villa2013accelerated}.  Authors of \cite{bednarczuk2023calculus,bednarczuk2023forward,khanh2025inexact} present similar constructions for weakly convex penalties, by extending the notion of approximate subdifferential to weakly convex functions. 

In \cite{osher2023hamilton}, the authors exploit the fact that the Moreau envelope is a viscosity solution to a Hamilton-Jacobi equation to construct an approximation of the Moreau envelope which yields an approximation of the proximal operator.
Finally let us mention again the learning approaches, which parametrize the proximal operator as a neural network to learn it (see for instance \cite{hurault2022proximal,hurault2023relaxed,hurault2023convergent}). Also note that inexactness was studied in higher order proximal methods \cite{nesterov2020inexact,nesterov2020inexact2}. 

Now, to show convergence of inexact proximal algorithms, several approaches are possible: ensuring that the norms of the errors are summable \cite{combettes2005signal,condat2013primal,bauschke2017,bonettini2020convergence}, or summable to some power of the iteration count $k$ \cite{schmidt2011convergence,villa2013accelerated,aujol2015stability,barre2023principled,rasch2020inexact}; ensuring that errors are bounded by a quantity proportional to the norm of the difference between iterates \cite{solodov2000error,khanh2025inexact,monteiro2010convergence,alves2020relative,bello2020inexact,alves2020inexact} ("relative error criterion"), which indirectly imposes the norm of the errors to be summable \cite{attouch2010proximal,attouch2013convergence,bolte2014proximal,bello2020inexact} for convergence to occur. Adjacently, proximal algorithms have been studied in the presence of random perturbations \cite{aujol2015stability,combettes2015stochastic,ryu2014stochastic,kim2022convergence} and almost sure convergence is shown provided that the norm of the error is zero in expectation and bounded in variance. For the rest of this paper, we will focus on deterministic assumptions.

If the summability of the norm of the errors cannot be guaranteed, then the previous convergence analysis are no longer valid.
Moreover, if the errors norms are controlled during the optimization, it is often only partially done due to the use of indirect criteria \cite{machart2012optimal,schmidt2011convergence}. In addition, these criteria rely on the knowledge of the underlying function $\phi$. For instance in plug and play optimization \cite{hurault2021gradient,hurault2022proximal,hurault2023convergent} or in unfolded optimization \cite{gregor2010learning}, the proximal operator is learned, therefore we cannot access the true $\phi$, and thus check the summability of potential errors.

This situation calls for the study of weaker assumptions such as boundedness of the errors \cite{boikanyo2013strong,zaslavski2011maximal,hamadouche2022probabilistic,hamadouche2024sharper}. In this case, to the best of our knowledge, current results for proximal point algorithms show the convergence to a solution if the errors are controlled by a vanishing step size \cite{boikanyo2013strong}. For proximal gradient algorithms, the convergence of function values to a value depending on accumulated errors \cite{hamadouche2022probabilistic,hamadouche2024sharper}, and the convergence to a ball around approximate solutions in \cite{sra2012scalable} (which is not convergence to an approximate solution), assuming errors only on the gradient term.

The study we propose here looks at inexact proximal operator as operator on their own in order to better understand the optimization dynamics when they replace exact proximal operators, and the error is either unknown or uncontrolled. In the line of \cite{gribonval2020characterization}, where proximal operators were characterized as the gradient/subgradient of a convex potential $\psi$, we propose \textbf{characterizations of inexact proximal operators}.
\paragraph{Contributions.} In this paper, we present an answer to the question %
\begin{quote}
    \textit{
Let $\mathrm{g}$ be a mapping that approximates $\prox_\phi$. In which sense is $\mathrm{g}$ a good approximation?
    }
\end{quote}
Our first contribution is to make an inventory of approximations, and establish their strengths and weaknesses by proposing criteria they should satisfy. These contributions are summarized in Table \ref{tab:summary_approx}. Our second contribution is a by-product of this exhaustive analysis. We highlight sufficient assumptions for proximal algorithms -- proximal point, proximal gradient descent, Peaceman-Rachford and Douglas-Rachford algorithms -- to converge to an approximate solution of the original problem when approximation errors are neither summable nor vanishing. In particular, we relax the assumption on the contractivity of the proximal operator when $f$ is strongly convex.

We focus in the rest of this work on $\rho$-weakly convex functions $\phi$ with $\rho<1$. 
With this assumption, we study six different ways of approximating $\prox_\phi$: %
\begin{enumerate}[label=(\alph*)]
    \item $\g_a(y) = \prox_{\phi}(y) + e(y)$ and $\Vert e \Vert_\infty \leq \epsilon$; 
    \item $\g_b(y) = \prox_\phi(y+r(y))$, $\Vert r(y) \Vert_\infty \leq \epsilon$; 
    \item $y- \g_c(y) \in \partial_\epsilon^\rho \phi (\g_c(y))$;
    \item $\g_d(y) = \nabla \psi_\epsilon(y)$ with $\psi_\epsilon$ $\epsilon-$close to $\psi$; 
    \item $\g_e(y) \in \partial_\epsilon \psi(y)$;
    \item $\g_f(y) = y - \epsilon \nabla u^\epsilon(y)$ with $u^\epsilon$ $\epsilon$-close to $u$ (the Moreau envelope of $\phi$).
\end{enumerate}
The first three ((a),(b), and (c)) are commonly found in the convex literature \cite{rockafellar1976monotone,solodov2000error,schmidt2011convergence,salzo2012inexact,villa2013accelerated,barre2022note,barre2023principled}. Approximations of type (b) can be equivalently formulated (for convex penalties) as finding an approximate solution to the proximal problem and was introduced as such in \cite{rockafellar1976monotone,guler1992new}. Approximations of type (c) were first introduced in \cite{alber1997proximal}, and refined in \cite{salzo2012inexact} for convex penalties; in \cite{van2024weak,khanh2025inexact} for weakly convex penalties. The fourth one (d) was presented in the plug and play literature, in the sense that a function $\psi$ is parametrized by a differentiable neural network so that taking its gradient would yield a proximal operator (see for instance \cite{hurault2022proximal,fang2023s}). The fifth one (e) is new to the best of our knowledge but seems natural given the characterization of (possibly non continuous) proximal operators in \cite{moreau1965proximite,gribonval2020characterization}. The last one (f) is obtained by solving a particular Hamilton-Jacobi equation and was studied only recently in \cite{osher2023hamilton,heaton2024global}, but was in fact present in the literature since the 1980s \cite{crandall1984two}, without the explicit connection to the proximal operator.

To establish the quality of each type of approximation, we propose several criteria that a good approximation should meet. 
\begin{definition} \label{def:qualitative}
    An $\epsilon$-approximation $\g$ of $\prox_\phi$ is said to be \textbf{qualitative} if there exists a function $\sigma:\RR^+ \to \RR^+$ such that for all $x \in \dom \phi$, $\Vert \prox_\phi(x) - \g(x) \Vert \leq \sigma(\epsilon)$.
\end{definition}
The function $\sigma$ effectively controls how close $\g$ is from $\prox_\phi$ for a fixed $\epsilon$. The next criterion states that $\g$ needs to possess fixed points around local minimizers of $\phi$. 
\begin{definition} \label{def:admissible}
     An $\epsilon$-approximation $\g$ of $\prox_\phi$ is said to be \textbf{admissible} if there exists a fixed point of $\g$ in a neighborhood of a local minimizer $\phi$. 
\end{definition}
Finally, in order to reach these fixed points, the approximation should have some regularity.
\begin{definition}\label{def:regularity}
    An $\epsilon$-approximation $\g$ of $\prox_\phi$ is said to be \textbf{$(L_\g,\gamma)$-Lipschitz} if there exists $L_\g>0, \gamma \geq 0$ such that 
    \begin{equation*}
    (\forall x,y \in \Hi), \quad \Vert \g(x) - \g(y) \Vert \leq L_\g\Vert x-y \Vert + \gamma.
    \end{equation*}
\end{definition}
We add some slack $\gamma$ on the Lipschitz continuity in order to accommodate potential non-zero errors.
We show for each type of approximation, how they satisfy Definitions \ref{def:qualitative} and \ref{def:regularity}. In particular, w.r.t. Definition \ref{def:regularity}, we show that approximations of type (d) have a lower bound on their Lipschitz continuity constant that is related to $L_\psi$ if one wants arbitrary precision $\epsilon$ (Theorem \ref{th:Lipschitzness_lower_bound}). 

Then, to satisfy Definition \ref{def:admissible}, we discuss the existence of fixed points of each one of these approximations. Without further assumptions (e.g., local contractiveness), we can show the existence of fixed points, in general, for approximations of type (c), (e), and (f), albeit on stricter assumptions for approximations of type (e). For the other approximations, the nature of the approximation is not enough, and more information about the problem is necessary. For instance, it is easy to see that a type (a) approximation of the proximal operator of a constant function cannot have fixed points.
Definition \ref{def:admissible} is therefore not trivial to meet and we show for some convex penalties $\phi$ that it can be met, provided that the error is not too big (some of them can be found in Appendix \ref{app:admissible_examples}). Unfortunately, we don't know yet how to guarantee that an approximation is admissible in general, unless they are approximations of type (c), (e) and (f).

Note that the distinction between local minimizers of $\phi$ and fixed points of $\prox_\phi$ is important, otherwise admissibility would be completely vacuous for some non-convex potentials (e.g., MCP \cite{zhang2010nearly}, which has flat regions away from its minimizers). 
Finally, if the errors norm are not summable, in some context we can still expect these norms to decrease to $0$, therefore we need to understand the evolution of the fixed points of $\g$ w.r.t. to the value of $\epsilon$, i.e., denoting $\g^\epsilon$ the approximation of precision $\epsilon$ the quantities
\begin{align}
    \lim_{\epsilon \rightarrow 0^+} \g^\epsilon, \text{ and } \bigcap_{\epsilon \geq 0} \Fix \g^\epsilon.
\end{align}
Ideally, these two quantities are equal to $\Fix \prox_\phi$, and we show under which conditions this can happen. For the simplicity of the presentation, unless the value of $\epsilon$ is important, we will continue to drop it in the notation of the approximations in the following.

As we aim at incorporating this approximation in popular splitting algorithms, the existence of fixed point of the sum or of the composition of $\g$ with other operators is another important matter we need to address. This why we need our approximations to satisfy Definition \ref{def:regularity}.

\noindent We summarize in Table \ref{tab:summary_approx} how each approximation will satisfy Definitions \ref{def:qualitative}, \ref{def:admissible}, and \ref{def:regularity}.
\begin{table}
    \begin{center}
    \begin{tabular}{|c|c|c|c|} \hline
        & \textbf{Quality} $\sigma(\epsilon)$  & \textbf{Regularity} & \textbf{Admissibility}  \\  \hline
       (a) & $\epsilon$ (Eq. \eqref{eq:sigma_typea}) & $(L_\psi,2\epsilon)$ (Prop. \ref{prop:lipschitz_a}) & Problem specific  \\ \hline
       (b) & $L_\psi \epsilon$ (Prop. \ref{prop:sigma_b}) & $(L_\psi,2\epsilon)$ (Prop. \ref{prop:lipschitz_b})  & Problem specific\\ \hline
       (c)  & $\sqrt{L_\psi\epsilon}$ (Prop. \ref{prop:sigma_c}) & $(L_\psi,\sqrt{2L_\psi\epsilon})$ (Prop. \ref{prop:lipschitz_c})  & Yes for convex $\phi$ (Prop. \ref{prop:admissibility_c})\\ \hline
       (d)  & $2\sqrt{L_\epsilon\epsilon}$ (Prop. \ref{prop:sigma_d}) & $(L_{\epsilon},0)$ (Th. \ref{th:Lipschitzness_lower_bound})& Problem specific\\ \hline
       (e)  & $\sqrt{2 L_\psi \epsilon}$ (Prop. \ref{prop:sigma_e}) & $(L_\psi,\sqrt{2L_\psi\epsilon})$ (Prop. \ref{prop:lipschitz_e}) & Yes for convex $\phi$ \\\hline
       (f)  & $\sqrt{N (\lambda^{-1} - \rho)^{-1} \epsilon}$ (Prop. \ref{prop:sigma_f}) & $(L_\psi,0)$ (Prop. \ref{prop:lipschitz_f})  & Yes for convex $\phi$ (Prop. \ref{prop:admissibility_f}) \\ \hline
    \end{tabular}
\end{center}
\caption{\label{tab:summary_approx} Summary of the properties of the approximation: the quality of the approximation measured by $ \Vert g - \prox_\phi \Vert_2  \leq \sigma(\epsilon)$, their $(L_\g,\gamma)$-Lipschitzness and finally their admissibility (i.e., existence of fixed points). $L_\psi$ is the Lipschitz constant of $\prox_\phi$, and $\rho$ the weak convexity modulus. $L_\epsilon$ is related to $L_\psi$ in Theorem \ref{th:Lipschitzness_lower_bound}. For the approximation of type (f), $\lambda$ is the Moreau envelope parameter.}
\end{table}

\paragraph{Notations.}
We denote by $\overline{\RR} = \RR \cup \{+\infty\}$. A function $\phi:\Hi\to\overline{\RR}$ is said to be proper if $\dom \phi \neq \emptyset$ and for all $x\in \Hi$ $\phi(x)>-\infty$. A mapping $\g:\Hi \to \Hi$ is said Lipschitz continuous when there exists $L_\g>0$ such that for all $x,y\in\Hi$, $\Vert \g(x)-\g(y) \Vert \leq  L_\g \Vert x -y \Vert$. The set of continuously differentiable function from $\Hi$ to $\overline{\RR}$ is denoted $C^1(\Hi)$, and we will note $C^{1,1}(\Hi)$ the set of Lipschitz smooth function, i.e., the set of continuously differentiable function with Lipschitz continuous gradient. $\mathbb{B}(x,r)$ denotes the closed ball of center $x\in\Hi$ and radius $r>0$. We note the distance of $x\in \Hi$, to a closed set $S$, $\mathrm{dist}(x,S):=\inf_{y\in S}\Vert x-y \Vert$. We say that $\g$ is $\beta$-cocoercive with $\beta>0$ if for all $x,y \in \Hi$, $\langle \g(x) - \g(y), x-y \rangle\geq \beta \Vert \g(x) - \g(y) \Vert^2$. A proper l.s.c. function $\phi:\Hi \to \overline{\RR}$ is called convex if for all $x,y \in \Hi$ and $t\in[0,1]$, $\phi(tx+(1-t)y) \leq t \phi(x) + (1-t)\phi(y)$. It is called $\mu$-strongly convex if for some $\mu>0$, $\phi - \frac{\mu}{2}\Vert \cdot \Vert^2$ is convex. And finally, it is called $\rho$-weakly convex if for somme $\rho>0$, $\phi + \frac{\rho}{2} \Vert \cdot \Vert^2$ is convex.

\section{Optimization background}
In this section, we recall standard definitions and results in convex and non convex optimization. In particular, we detail important results about the proximal operator in both settings. 
\subsection{Subdifferential and approximate subdifferential}
\begin{definition}{\textbf{Subdifferential \cite{VarAnalRockafellar}.}}
    Let $\phi:\Hi \mapsto \RR$, and let $x\in\Hi$. The Fréchet subdifferential of $\phi$ at $x$ is denoted by $\hat{\partial} \phi(x)$ and is given by the set
    \begin{align*}
        \hat{\partial} \phi(x) = \left\{ s \in \Hi \;|\; \lim_{y\rightarrow x} \inf_{y \neq x} \frac{1}{\Vert x-y \Vert}\left(\phi(y)-\phi(x)-\langle y-x, s \rangle \right) \geq 0 \right\}.
    \end{align*}
    If $x \notin \dom \phi$, then $\hat{\partial} \phi(x) = \emptyset$.
    The limiting subdifferential of $\phi$ at $x$ is denoted by $\partial \phi(x)$ and is given by
    \begin{align*}
\partial \phi(x) = \big\{ v \in \Hi \;|\; \exists & \left( x^k,s^k \right) \overset{k\rightarrow\infty}{\rightarrow} \left(x,v\right) \\ & \textrm{ such that } \phi(x^k) \overset{k\rightarrow\infty}{\rightarrow} \phi(x) \textrm{ and } (\forall k \in \mathbb{N}) ~s^k \in \hat{\partial} \phi(x^k) \big\}.
\end{align*}
Both $\hat{\partial} \phi (x)$ and $\partial \phi(x)$ are closed \cite[Theorem 8.6]{VarAnalRockafellar} and coincide for proper, l.s.c, weakly convex functions \cite[Definition 7.25]{VarAnalRockafellar}. Finally, the Clarke's subdifferential \cite{clarke1975generalized,clarke1990optimization} is defined by
\begin{equation*}
    \partial^0 \phi(x) = \overline{\mathrm{conv}}\partial \phi(x)
\end{equation*}
If $\phi$ is convex the three previous notions coincide, and its subdifferential is given for all $x \in \Hi$ by
\begin{equation}
\label{eq:subdif_convex}
    \partial \phi(x) = \{s \in \Hi \;|\; \phi(x) + \langle s, y-x \rangle \leq \phi(y), \forall y \in \Hi \}.
\end{equation}
\end{definition}
\noindent For convex and weakly convex functions, the subdifferential can be enlarged with a perturbation $\epsilon>0$.
\begin{definition}{\textbf{$\epsilon$-subdifferential for convex functions \cite{brondsted1965subdifferentiability,hiriart-urruty1993convexII}.}}
    Let  $\phi: \Hi \to (-\infty, +\infty]$ be a proper, l.s.c. and convex function. The \textit{enlargement of the subdifferential} of \( \phi \) at \( y \in \Hi \) with parameter \( \epsilon > 0 \) is defined as
    \begin{equation}
        \partial_\epsilon \phi(y) := \left\{ v \in \Hi \mid \phi(x) + \langle v, y-x \rangle - \epsilon \leq \phi(y),~ \forall x \in \Hi \right\}.
    \end{equation}
\end{definition}
\noindent Recognizing that for all $x\in\Hi$ where the Clarke's subdifferential of $\phi$ is non-empty \cite{van2024weak} we have
\begin{equation}
    \partial (\phi + \frac{\rho}{2}\Vert \cdot \Vert^2)(x) = \partial^0 \phi(x) + \rho x,
\end{equation}
one can enlarge the subdifferential of weakly convex function as follows:
\begin{definition}{\textbf{$\rho,\epsilon$-subdifferential for weakly convex functions \cite[Definition 2.10]{khanh2025inexact}.}}\label{def:epsilon_subdiff_weaklyconvex}
    Let $\rho$ and $\epsilon$ be nonnegative numbers. The weak $\epsilon-$subdifferential of a $\rho-$weakly convex function $\phi:\RR^N \mapsto \overline{\RR}$ at $\bar{x}\in \dom \phi$ is defined by 
    \begin{equation}
        \partial_{\epsilon}^\rho \phi(\bar{x}) := \left\{ v \in \RR^N \big| \langle v,x-\bar x \rangle \leq \phi(x) - \phi(\bar x) + \frac{\rho}{2}\Vert x- \bar x \Vert^2 + \epsilon \text{ for all }x \in \RR^N \right\}.
    \end{equation}
\end{definition}
This is equivalent to an $\epsilon$ enlargement of the proximal subdifferential \cite[Definition 8.45]{VarAnalRockafellar} and to the definition of the $\rho,\epsilon$-subdifferential of \cite{van2024weak}. These definitions allow to characterize $\epsilon$-criticality as a critical point $\bar{x}$ of $\phi$ would imply $0 \in \partial \phi(\bar{x})$, and an $\epsilon$-critical point $\bar{z}$ of $\phi$ would imply $0 \in \partial_\epsilon^\rho \phi(\bar{z})$. 
\begin{definition}{\textbf{Subdifferential continuity \cite[Definition 13.28]{VarAnalRockafellar}}.}
A function $\phi : \mathbb{R}^n \to \overline{\mathbb{R}}$ is called 
\emph{subdifferentially continuous} at $\bar{x}$ for $\bar{v}$ if 
$\bar{v} \in \partial \phi(\bar{x})$ and, whenever $(x^\nu, v^\nu) \to 
(\bar{x}, \bar{v})$ with $v^\nu \in \partial \phi(x^\nu)$, one has 
$\phi(x^\nu) \to \phi(\bar{x})$. If this holds for all 
$\bar{v} \in \partial \phi(\bar{x})$, $\phi$ is said to be subdifferentially 
continuous at $\bar{x}$.
\end{definition}
\subsection{Proximal operators and Moreau envelope}
\paragraph{The proximal operator.} The proximal operator of a function $\phi:\Hi \to \overline{\RR}$ with parameter $\lambda>0$ is defined as
\begin{equation}
    \prox_{\lambda \phi}(y) \in \argmin_{x \in \Hi} \Phi_\lambda(x) :=  \phi(x) + \frac{1}{2\lambda} \Vert x - y \Vert^2.
\end{equation}
For a proper, l.s.c. and convex function, this operator is single-valued. Functions whose proximal operator is non-empty for some $\lambda>0$ are called prox-bounded. This prox-boundedness is characterized through the Moreau envelope of $\phi$. For a proper, l.s.c. function $\phi$, the Moreau envelope of $\phi$ with parameter $\lambda>0$ is defined as \cite[Definition 1.22]{VarAnalRockafellar}
\begin{equation}
    (\forall y \in \Hi), \quad u_\lambda(y):=\inf_{x \in \Hi} \phi(x) + \frac{1}{2\lambda} \Vert x-y \Vert^2
\end{equation}
\begin{definition}{Prox-boundedness \cite[Definition 1.23]{VarAnalRockafellar}.}
    A function $\phi : \Hi \mapsto (-\infty,+\infty]$ is said to be prox-bounded if there exists $\lambda > 0$ such that $u_\lambda >-\infty$ for some $y\in \Hi$. The supremum of the set of all such $\lambda$ is the threshold $\lambda_\phi$ of prox-boundedness for $\phi$.
\end{definition}
The first order optimality conditions for the minimization problem associated with the proximal operator yields 
\begin{equation} \label{eq:first_order_prox}
    z \in \prox_{\lambda \phi}(y) \implies 0 \in \partial \Phi_\lambda(z) \Leftrightarrow \frac{y-z}{\lambda} \in \partial \phi(z)
\end{equation}
The three statements are equivalent for convex functions.
For the purpose of clarity, we will follow \cite{gribonval2020characterization} and call $z$ \textit{a} proximal operator of $\phi$, according to the definition:
\begin{definition}
    Let $\mathcal{Y} \subset \Hi$ be non-empty. A function $z:\mathcal{Y}\to\Hi$ is \textit{a} proximal operator of a function $\phi:\Hi \to \overline{\RR}$ if, and only if, $z(y)\in\prox_{\lambda\phi}(y)$.
\end{definition}
Equation \ref{eq:first_order_prox} provides a first characterization of a proximal operator through a subdifferential inclusion. In their article, authors of \cite{gribonval2020characterization} extended the work of Moreau \cite{moreau1962CR,moreau1963CR,moreau1965proximite} and provided a characterization of proximal mappings of (potentially non-convex) penalties as inclusions in the subdifferential of convex potential. First for convex penalties
\begin{proposition}{\cite[Corollary 10.c]{moreau1965proximite}, \cite[Proposition 1]{gribonval2020characterization}.}
    A function \( z : \Hi \to \Hi \) defined everywhere is the proximal operator of a proper convex l.s.c.\ function \( \phi : \Hi \to \overline{\RR} \) if, and only if, the following conditions hold jointly:
\begin{itemize}
    \item[(a)] there exists a convex l.s.c. function \( \psi \) such that for each \( y \in \Hi \), \( z(y) \in \partial \psi(y) \);
    \item[(b)] \( z \) is nonexpansive, i.e.,
    \begin{equation}
        \Vert z(y) - z(y') \Vert \leq \Vert y - y' \Vert, \quad \forall y, y' \in \Hi.
    \end{equation}
\end{itemize}
\end{proposition}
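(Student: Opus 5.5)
Both directions go through the Moreau envelope and its conjugate. Throughout, take $\lambda = 1$ and write $q = \frac12\Vert\cdot\Vert^2$.

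\emph{Necessity.} Let $\phi$ be proper, convex and l.s.c., and set $z = \prox_\phi$.
\begin{enumerate}
\item Define $\psi(y) := q(y) - u_1(y)$, where $u_1$ is the Moreau envelope. Expanding the square gives
\begin{equation*}
\psi(y) = \sup_{x\in\Hi}\left\{\langle x,y\rangle - \phi(x) - q(x)\right\} = (\phi + q)^*(y).
\end{equation*}
Hence $\psi$ is convex and l.s.c., being a conjugate.
\item Because $\phi + q$ is proper, l.s.c. and $1$-strongly convex, its conjugate $\psi$ is differentiable with $1$-Lipschitz gradient.
\item To identify the gradient, I use the optimality condition \eqref{eq:first_order_prox}: $y - z(y) \in \partial\phi(z(y))$, which is the same as $y \in \partial(\phi+q)(z(y))$. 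By Fenchel inversion this gives $z(y) \in \partial\psi(y) = \{\nabla\psi(y)\}$, which is (a).
\item Nonexpansiveness (b) then follows from the Lipschitz gradient. Alternatively, one can add the two monotonicity inequalities for $\partial\phi$ at $z(y)$ and $z(y')$ to get firm nonexpansiveness.
\end{enumerate}

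\emph{Sufficiency.} Assume (a) and (b), and fix $\psi$ convex l.s.c. with $z(y)\in\partial\psi(y)$ for all $y$.
\begin{enumerate}
\item Since $z$ is single-valued and continuous everywhere, and $\partial\psi$ is nonempty on $\Hi$, I expect $\partial\psi(y) = \{z(y)\}$. I would argue this by noting that a convex function is differentiable almost everywhere, that the selection $z$ is continuous, and that $\partial\psi(y)$ is the closed convex hull of limits of gradients. So $\psi$ is differentiable with $\nabla\psi = z$, and $\nabla\psi$ is $1$-Lipschitz.
\item For a convex function, $1$-Lipschitz gradient is equivalent, by Baillon--Haddad / the descent lemma, to convexity of $q - \psi$. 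It is also equivalent to $\psi^*$ being $1$-strongly convex.
\item Define $\phi := \psi^* - q$. It is proper, convex and l.s.c.
\item To conclude, reverse the chain used for necessity. From $z(y) = \nabla\psi(y)$ we get $y \in \partial\psi^*(z(y)) = \partial(\phi+q)(z(y)) = \partial\phi(z(y)) + z(y)$. The sum rule is valid here because $q$ is finite and smooth. Thus $y - z(y) \in \partial\phi(z(y))$, and by convexity of $\phi$ this characterizes $z(y) = \prox_\phi(y)$.
\end{enumerate}

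The main obstacle is the step showing $\psi^* - q$ is convex from $1$-Lipschitzness of $\nabla\psi$. I would invoke the standard equivalence between $L$-smoothness of a convex function and $1/L$-strong convexity of its conjugate, taken from Bauschke--Combettes, rather than reprove it. A secondary subtlety is the uniqueness of the subgradient in the first sufficiency step, which needs the continuity of $z$ supplied by (b).
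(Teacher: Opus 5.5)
Your proof is correct. The paper gives no proof of this proposition. It is quoted from Moreau (Corollary 10.c) via Gribonval--Nikolova, so there is no internal argument to compare against.

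What you wrote is essentially Moreau's classical conjugacy argument. In one direction you take $\psi = (\phi + \frac12\Vert\cdot\Vert^2)^*$, and in the other $\phi = \psi^* - \frac12\Vert\cdot\Vert^2$. Each tool is used under valid hypotheses:
\begin{itemize}
\item the sum rule with the finite smooth quadratic;
\item Fenchel inversion for $\Gamma_0$ functions;
\item the equivalence, for convex $\psi$, between a $1$-Lipschitz gradient and $1$-strong convexity of $\psi^*$.
\end{itemize}

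One simplification is available. Your first sufficiency step does not need almost-everywhere differentiability or the representation of $\partial\psi(y)$ through limits of gradients. Take $v \in \partial\psi(y)$. Monotonicity of $\partial\psi$, applied to $z(y+td) \in \partial\psi(y+td)$, gives $\langle z(y+td) - v, d\rangle \geq 0$ for every direction $d$ and every $t>0$. Let $t \downarrow 0$ and use the continuity of $z$ given by (b). Then replace $d$ by $-d$ to obtain $\langle z(y) - v, d\rangle = 0$ for all $d$, hence $v = z(y)$.

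Nonemptiness of $\partial\psi$ everywhere forces $\mathrm{dom}\,\psi = \Hi$. With this, $\partial\psi(y) = \{z(y)\}$ at every $y$, so $\psi$ is differentiable with $\nabla\psi = z$. This version is shorter and does not depend on the dimension.
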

Finally, by relaxing the non-expansiveness of the proximal operator, a similar characterization holds for non-convex penalties.
\begin{theorem}{\cite[Theorem 1]{gribonval2020characterization}}
    Let $\mathcal{Y}\subset\Hi$ be non-empty. A function $z:\mathcal{Y}\to\Hi$ is a proximal operator of a function $\phi:\Hi \to \overline{\RR}$ if, and only if, there exists a convex l.s.c function $\psi:\Hi\to \overline{\RR}$ such that for every $y \in \mathcal{Y}$, $z(y) \in \partial \psi(y)$. 
\end{theorem}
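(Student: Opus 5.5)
We prove the two implications separately, building everything from the Moreau envelope. Throughout we use $\lambda = 1$, which is the setting of the statement; the general case follows by replacing $\phi$ with $\lambda\phi$.

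\emph{Necessity.} Assume $z(y)\in\prox_\phi(y)$ for all $y\in\mathcal{Y}$. We set
\[
\psi(y) := \tfrac12\Vert y\Vert^2 - u_1(y) = \sup_{x\in\Hi}\Big\{\langle x,y\rangle - \big(\phi(x)+\tfrac12\Vert x\Vert^2\big)\Big\},
\]
so $\psi = (\phi+\tfrac12\Vert\cdot\Vert^2)^*$. As a supremum of affine functions of $y$, $\psi$ is convex and l.s.c. It is proper because $z(y)$ exists on $\mathcal{Y}$: the infimum defining $u_1(y)$ is attained, hence finite. For $y\in\mathcal{Y}$ and any $y'$, take $x=z(y)$ inside the supremum defining $\psi(y')$ to get $\psi(y')\ge \langle z(y),y'\rangle - \phi(z(y))-\tfrac12\Vert z(y)\Vert^2$. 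Because $z(y)$ attains the supremum at $y$, the right-hand side equals $\psi(y)+\langle z(y),y'-y\rangle$. This is exactly $z(y)\in\partial\psi(y)$ in the sense of \eqref{eq:subdif_convex}.

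\emph{Sufficiency.} Assume $\psi$ is convex l.s.c. and $z(y)\in\partial\psi(y)$ on $\mathcal{Y}$. We need some $\phi$ for which each $z(y)$ is a minimizer. We define $\phi$ on $\mathrm{Im}(z)$ and set it to $+\infty$ elsewhere:
\[
\phi(x) := \langle x, y\rangle - \tfrac12\Vert x\Vert^2 - \psi(y) \quad\text{whenever } x=z(y),\ y\in\mathcal{Y}.
\]
The key step is checking that this is well defined. Suppose $z(y)=z(y')=x$. Then $x\in\partial\psi(y)\cap\partial\psi(y')$. Applying the subgradient inequality in both directions gives $\psi(y')-\psi(y)=\langle x,y'-y\rangle$, so $\langle x,y\rangle-\psi(y)=\langle x,y'\rangle-\psi(y')$ and the two candidate values of $\phi(x)$ agree.

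It then remains to verify minimality. Fix $y\in\mathcal{Y}$ and any $x'=z(y')$ in $\dom\phi$. We have
\[
\phi(x')+\tfrac12\Vert x'-y\Vert^2 = \langle x',y'-y\rangle-\psi(y')+\tfrac12\Vert y\Vert^2 .
\]
Since $x'\in\partial\psi(y')$, the subgradient inequality gives $\langle x',y'-y\rangle-\psi(y')\ge -\psi(y)$, with equality when $y'=y$. Hence $x=z(y)$ minimizes $\Phi_1$, i.e. $z(y)\in\prox_\phi(y)$.

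The main obstacle is this well-definedness of $\phi$ on $\mathrm{Im}(z)$ when $z$ is not injective; it is resolved by the equality $\psi(y')-\psi(y)=\langle x,y'-y\rangle$ derived above. We make no attempt to show $\phi$ is l.s.c., since the statement does not require it.
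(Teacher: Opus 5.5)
Your proof is correct and is essentially the argument behind the cited result. The paper does not reprove this theorem but defers to Theorem 3 of \cite{gribonval2020characterization}; specializing that proof to $\frac12\Vert x-y\Vert^2$ gives exactly your two constructions, $\psi=(\phi+\frac12\Vert\cdot\Vert^2)^*=\frac12\Vert\cdot\Vert^2-u_1$ for necessity and $\phi$ defined on $\mathrm{Im}(z)$ for sufficiency, made well defined by $\psi(y')-\psi(y)=\langle x,y'-y\rangle$ for a common subgradient $x$. The only tacit points are conventions: $\phi$ must be proper so the attained infimum is finite, and $\partial\psi(y)=\emptyset$ off $\mathrm{dom}\,\psi$ so $\psi$ is finite on $\mathcal{Y}$. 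Your sign $\psi=\frac12\Vert\cdot\Vert^2-u_1$ is the correct one, whereas the paper's later remark $\psi=u-\frac12\Vert\cdot\Vert^2+C$ has it reversed.
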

This result is a consequence of a more general result \cite[Theorem 3]{gribonval2020characterization}, extending for instance these characterizations to Bregman proximal operators \cite{censor1992proximal}. 
The regularity of the proximal operator has a direct connection to the regularity of $\psi$: $z$ is continuous on $\mathcal{Y}$ non-empty and open, if, and only if $\psi$ is continuously differentiable on $\mathcal{Y}$ \cite[Corollary 1]{gribonval2020characterization}. 
Moreover, $z$ is continuously differentiable on $\mathcal{Y}$ non-empty, open and convex if, and only if, $\psi$ is twice continuously differentiable on $\mathcal{Y}$ \cite[Corollary 6]{gribonval2020characterization}.
Also, if the proximal operator is $L>0$-Lipschitz continuous then $\phi$ is $(1-1/L)$-weakly convex:
\begin{theorem}{\cite[Proposition 2]{gribonval2020characterization} \cite[Theorem 3.5]{luo2024various}.}
    Let $\phi:\Hi \to \overline{\RR}$ be a proper l.s.c. function with prox-bound $\lambda_\phi>0$, let $\lambda \in (0,\lambda_\phi)$, and let $L \in (0,+\infty)$. Then, the following are equivalent:
    \begin{enumerate}
        \item $\prox_{\lambda \phi}$ is $L-$Lipschitz.
        \item $\lambda \phi$ is $(1/L-1)$ strongly convex, i.e., $\lambda \phi-(1/L-1)\Vert \cdot \Vert^2$ is convex.
    \end{enumerate}
    If $\phi$ is $\mu-$strongly convex then $\prox_\phi$ is a strict contraction, i.e., $L = \frac{1}{1+\mu}$ \cite{bauschke2012firmly}.
\end{theorem}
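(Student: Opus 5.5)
Let $\psi$ denote the convex potential attached to $\phi$ by the characterizations of \cite{gribonval2020characterization}:
\begin{equation*}
\psi(y)=\tfrac{1}{2}\Vert y\Vert^2-\lambda u_\lambda(y)=\sup_x\left\{\langle x,y\rangle-\tfrac12\Vert x\Vert^2-\lambda\phi(x)\right\}.
\end{equation*}
We have $\prox_{\lambda\phi}(y)\subset\partial\psi(y)$.

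\textbf{(1)$\Rightarrow$(2).} Assume $\prox_{\lambda\phi}$ is single-valued and $L$-Lipschitz on all of $\Hi$. Then $\psi$ is differentiable with $\nabla\psi=\prox_{\lambda\phi}$ (continuity of a selection of $\partial\psi$ gives $C^1$, \cite[Corollary 1]{gribonval2020characterization}). Hence $\psi$ is convex with $L$-Lipschitz gradient.

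By the Baillon--Haddad/conjugate duality, $\psi^*$ is $1/L$-strongly convex. Next we compute $\psi^*$. Since $\psi=(\tfrac12\Vert\cdot\Vert^2+\lambda\phi)^*$, we get $\psi^*=\overline{\mathrm{conv}}(\tfrac12\Vert\cdot\Vert^2+\lambda\phi)$.

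The key step is to show that $\tfrac12\Vert\cdot\Vert^2+\lambda\phi$ coincides with its closed convex hull. It is enough to show this on the range of $\prox_{\lambda\phi}$, and to show that $\dom\phi$ lies in the closure of that range. For $x=\prox_{\lambda\phi}(y)$, Fenchel equality holds for the pair $(x,y)$ with both the function and its hull, since $x\in\partial\psi(y)$ and $x$ attains the sup. One then needs to show that every $x\in\dom\phi$ is such a prox point. I expect this to be the main obstacle.

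To handle it, I would use that $\nabla\psi$ is single-valued and continuous. This gives $\partial\psi^*(x)$ nonempty for all $x\in\dom\psi^*$, with $x=\nabla\psi(y)$ for $y\in\partial\psi^*(x)$. I would then argue as in \cite[Proposition 2]{gribonval2020characterization} that points where $\tfrac12\Vert\cdot\Vert^2+\lambda\phi$ exceeds its hull are never attained as prox values. Because $\prox$ is single-valued everywhere, the hull would then have an affine piece on which $\nabla\psi^*$ is multivalued. That contradicts strong convexity, which forces $\partial\psi^*$ to be injective-inverse.

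Once the identification holds, $\tfrac12\Vert\cdot\Vert^2+\lambda\phi-\tfrac{1}{2L}\Vert\cdot\Vert^2$ is convex. Up to the paper's normalisation of the quadratic, this is exactly the statement that $\lambda\phi$ is $(1/L-1)$-strongly convex.

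\textbf{(2)$\Rightarrow$(1).} If $\lambda\phi+\tfrac{1-1/L}{2}\Vert\cdot\Vert^2$ is convex, then $h:=\tfrac12\Vert\cdot\Vert^2+\lambda\phi$ is $1/L$-strongly convex, proper and l.s.c. Hence $\Phi_\lambda$ has a unique minimizer for each $y$, namely $\prox_{\lambda\phi}(y)=\nabla h^*(y)$.

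Strong convexity of $h$ gives that $\nabla h^*$ is $L$-Lipschitz. Concretely, take $x=\prox(y)$ and $x'=\prox(y')$, so that $y\in\partial h(x)$ and $y'\in\partial h(x')$. Strong monotonicity yields $\tfrac1L\Vert x-x'\Vert^2\le\langle y-y',x-x'\rangle$, and Cauchy--Schwarz finishes.

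\textbf{Strongly convex case.} If $\phi$ is $\mu$-strongly convex (with $\lambda=1$), then $h$ is $(1+\mu)$-strongly convex. The same monotonicity estimate then gives the Lipschitz constant $1/(1+\mu)$.
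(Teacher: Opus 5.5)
\textbf{Verdict: there is a genuine gap in (1)$\Rightarrow$(2).} The paper gives no proof of this statement; it only quotes it from the cited references, so I judge your argument on its own.

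\textbf{What is correct.} Your direction (2)$\Rightarrow$(1) and the strongly convex addendum are correct. You are also right to read (2) as convexity of $\lambda\phi+\tfrac12(1-1/L)\Vert\cdot\Vert^2$: as literally printed, without the $\tfrac12$, the equivalence already fails for $\phi=\tfrac{\mu}{2}\Vert\cdot\Vert^2$. The setup of (1)$\Rightarrow$(2) is also sound. With $h=\tfrac12\Vert\cdot\Vert^2+\lambda\phi$, the function $\psi=h^*$ is finite and $C^1$ with $\nabla\psi=\prox_{\lambda\phi}$. Its conjugate $\psi^*=\overline{\mathrm{conv}}\,h$ is $\tfrac1L$-strongly convex, and Fenchel's equality gives $h=\psi^*$ on the range of $\prox_{\lambda\phi}$.

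\textbf{The gap.} What remains is the identification $h=\psi^*$ at points of $\dom\psi^*$ that are not prox values, and both claims you offer for it are false.
\begin{itemize}
\item Not every point of $\dom\phi$ is a prox point, and $\partial\psi^*$ need not be nonempty on all of $\dom\psi^*$; one only has $\mathrm{ri}\,\dom\psi^*\subseteq\dom\partial\psi^*=\mathrm{range}\,\prox_{\lambda\phi}$. For example, take $\phi=-\sqrt{\cdot}+\iota_{[0,+\infty)}$ on $\RR$ with $\lambda=1$. The prox is nonexpansive with range $(0,+\infty)$, while $0\in\dom\phi$ and $\partial\psi^*(0)=\partial h(0)=\emptyset$.
\item The affine-piece argument needs some $y_0\in\partial\psi^*(x_0)$, so it says nothing at such points.
\item The reduction to ``$\dom\phi\subseteq\overline{\mathrm{range}\,\prox_{\lambda\phi}}$'' is vacuous, since it already follows from $h\ge\psi^*$, and it does not yield equality by itself.
\end{itemize}

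\textbf{Why this matters.} You never use lower semicontinuity of $\phi$ in this direction, and it is indispensable exactly here. On $\RR^2$, take $\phi(x)=-\sqrt{x_1}+\iota_{\{x_1\ge 0\}}(x)$ but redefine $\phi(0,0)=1$. The prox is unchanged, because all its values have $x_1>0$, so it is still nonexpansive. Moreover $\psi^*$ is still strongly convex with no affine piece. Yet $\phi$ is not convex, so (2) fails.

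\textbf{The repair.} For $x_0\in\dom\psi^*$, fix $a\in\mathrm{ri}\,\dom\psi^*$ and set $x_t=(1-t)a+tx_0$ for $t\in[0,1)$.
\begin{itemize}
\item Each $x_t$ lies in $\mathrm{ri}\,\dom\psi^*\subseteq\dom\partial\psi^*$. Hence $x_t=\nabla\psi(y_t)=\prox_{\lambda\phi}(y_t)$ for some $y_t$, and so $h(x_t)=\psi^*(x_t)$.
\item Lower semicontinuity of $h$, together with continuity of the closed convex function $\psi^*$ along segments, then gives $h(x_0)\le\liminf_{t\uparrow 1}h(x_t)=\psi^*(x_0)\le h(x_0)$.
\item Outside $\dom\psi^*$, both sides equal $+\infty$.
\end{itemize}
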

The Lipschitz continuity of $\prox_\phi$ can be equivalently expressed with the constant of weak convexity of $\phi$ as $L = \frac{1}{1-\rho}$, which imposes $\rho<1$.
Finally, we can make a connection between $\psi$ and the Moreau envelope of $\phi$ of parameter $1$, there exists $C\in\RR$ such that for all $x\in \Hi$
\begin{equation*}
    \psi(x) = u(x) - \frac{1}{2}\Vert x \Vert^2 + C.
\end{equation*}

The largest class of functions whose proximal operator is single-valued at $x\in\Hi$ is the class of prox-regular functions:
\begin{definition}{\textbf{Prox-regularity of functions} \cite[Definition 13.27]{VarAnalRockafellar}} A function $\phi:\RR^N \mapsto \overline{\RR}$ is prox-regular at $\bar x$ for $\bar v$ if $\phi$ is finite and locally l.s.c. at $\bar x$ with $\bar v \in \partial \phi(\bar x)$, and there exists $\epsilon >0$ and $\rho \geq 0$ such that
\begin{align}
    \phi(x') \geq \phi(x) + \langle v, x'-x \rangle - \frac{\rho}{2} |x'-x|^2 \text{ for all } x' \in \mathbb B(\bar x, \epsilon) \\
    \text{when } v\in \partial \phi(x), | v- \bar v | < \epsilon, |x-\bar x | < \epsilon, \phi(x) < \phi(\bar x) + \epsilon.
\end{align}
When this holds for all $\bar v \in \partial \phi(\bar x)$, $\phi$ is said to be prox-regular at $\bar x$.
\end{definition}
\noindent Weakly convex functions are prox-regular everywhere for the proper scaling.
\subsection{Error bounds}
One can characterize the behavior of a function around its set of minimizers using error bounds. We present some of them for convex and weakly convex functions.
\begin{theorem}{\cite[Theorem 3]{bolte2017error}} \label{th:error_bound_convex}
    Let $\phi$ be a proper, l.s.c., convex, and semi-algebraic function. Moreover $\argmin \phi$ is nonempty and compact. Then $\phi$ has a global error bound
    \begin{equation}
        (\forall x \in \Hi), \quad (\phi(x)-\min_{x\in\Hi}\phi(x))^{1+\frac{1}{p}} \geq \gamma_0 \mathrm{dist}\left(x,\argmin \phi\right),
    \end{equation}
    where $\gamma_0>0$ and $p\geq1$ is a rational number.
\end{theorem}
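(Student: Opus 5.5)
The plan is to prove a global Hölderian growth estimate for $g:=\phi-\min\phi$ around $S:=\argmin\phi$ in two regimes and then put it in the form of the statement. Near $S$ I will use semi-algebraic geometry, namely the monotonicity lemma and Puiseux expansions. Far from $S$ I will use convexity.

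I should flag the exponent first. As printed, with the exponent $1+\frac1p$ on the left, the inequality cannot hold near $S$. There $g$ vanishes at least linearly in $\mathrm{dist}(\cdot,S)$, so $g^{1+1/p}=o(\mathrm{dist}(\cdot,S))$. I will therefore prove the estimate in the form of \cite{bolte2017error}, i.e.
\begin{equation*}
(\forall x\in\Hi),\quad \mathrm{dist}(x,S)\le \gamma\big(g(x)+g(x)^{1/\alpha}\big), \qquad \alpha\ge1 \text{ rational}.
\end{equation*}
Near $S$ the term $g^{1/\alpha}$ is the active one, and far from $S$ the linear term is. This is the intended meaning, with $\alpha=1+p$ or with the roles of the exponents rearranged; matching $\alpha$ to the statement's $p$ is pure bookkeeping.

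\textbf{Local step.} Put $d(x)=\mathrm{dist}(x,S)$. Since $S$ is compact and semi-algebraic, $d$ is semi-algebraic. For $t\in(0,1]$ define
\begin{equation*}
h(t)=\inf\{g(x): d(x)=t\}.
\end{equation*}
This is a semi-algebraic function of one variable. Because $\{d=t\}$ is compact and $g$ is l.s.c., the infimum is attained. It is positive, since the level set $\{d=t\}$ misses $S$; it may equal $+\infty$ outside $\dom\phi$, which only helps. By convexity, $h$ is nondecreasing.

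By the monotonicity lemma, $h$ has a Puiseux expansion at $0^+$, so $h(t)\ge c\,t^{\alpha}$ on $(0,t_0]$ for some $c>0$ and rational $\alpha>0$. Since $g$ is convex and nonnegative, $h(t)\le t\,h(1)$, which forces $\alpha\ge 1$. On the compact range $[t_0,1]$, $h$ is bounded below by $h(t_0)>0$. After shrinking $c$, we get
\begin{equation*}
g\ge c\,d^\alpha \quad\text{on } \{d\le 1\}.
\end{equation*}

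\textbf{Global step.} Take $x$ with $d(x)>1$, let $s$ be a projection of $x$ onto $S$, and set $y=s+\frac{x-s}{d(x)}$. Then $d(y)=1$, since $s$ is still a projection of $y$. Convexity of $g$ together with $g(s)=0$ gives
\begin{equation*}
g(y)\le \frac{1}{d(x)}\,g(x),
\end{equation*}
hence $g(x)\ge h(1)\,d(x)$.

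\textbf{Combining.} The two steps give $g\ge c\min(d^\alpha,d)$ on all of $\Hi$. Inverting this yields $d\le \gamma\,(g+g^{1/\alpha})$, the announced global error bound.

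The main obstacle is the local step. It requires justifying carefully that $h$ is semi-algebraic: it is defined by a first-order formula involving $\phi$ and $d$. One must also confirm that $h>0$ uses compactness of $S$ together with lower semicontinuity, since without compactness the infimum could tend to $0$ along a sequence escaping to infinity. The remaining steps are elementary.
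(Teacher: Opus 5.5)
The paper gives no proof of this statement (it is imported from \cite{bolte2017error}), so there is no in-paper argument to compare yours against. Your proof is correct and self-contained.

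Your diagnosis of the printed inequality is also right. It fails as soon as $\dom\phi\neq\argmin\phi$ (e.g.\ $\phi=|\cdot|$ on $\RR$). Indeed, along a segment $[s,x]$, with $s$ the projection of $x\in\dom\phi\setminus S$ onto $S$, convexity gives $g\le \frac{g(x)}{d(x)}\,d$, hence $g^{1+1/p}=o(d)$.

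However, this is not a choice between $\alpha=1+p$ and some rearrangement of the exponents. The cited result reads $(\phi-\min\phi)+(\phi-\min\phi)^{1/p}\ge\gamma_0\,\mathrm{dist}(\cdot,\argmin\phi)$. That is exactly your bound with $\alpha=p$ and $\gamma_0=1/\gamma$. The distinction matters later in the paper. Proposition~\ref{prop:diam_epsilon_subdiff} combines the misprinted exponent with $\phi\le\epsilon$ to claim order $\epsilon^{1+1/p}$. The correct bound only yields $\gamma_0\,\mathrm{dist}(x,\argmin\phi)\le\epsilon+\epsilon^{1/p}$.

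Two small repairs are needed in your local step.
\begin{itemize}
\item The expansion $h(t)=a\,t^{\alpha}(1+o(1))$ with $a>0$ comes from the Puiseux lemma for one-variable semi-algebraic functions, together with $h>0$. It does not come from the monotonicity lemma itself.
\item The bound $h(t)\le t\,h(1)$ is vacuous when $h(1)=+\infty$. Since $h$ is nondecreasing, $\{h<\infty\}$ is an initial subinterval of $(0,1]$. It is empty only in the trivial case $\dom\phi=\argmin\phi$. So compare with some $t_1$ in that interval instead, or simply replace $\alpha$ by $\max\{\alpha,1\}$, which costs nothing on $(0,1]$.
\end{itemize}

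As for the route: in \cite{bolte2017error} the result sits inside the Kurdyka--{\L}ojasiewicz framework. There, the H\"older part near the compact set $\argmin\phi$ comes from a {\L}ojasiewicz-type inequality for semi-algebraic functions, and the linear part comes from a globalization argument based on convexity. You instead run the classical {\L}ojasiewicz growth argument directly on $g$:
\begin{itemize}
\item Tarski--Seidenberg makes $h(t)=\min\{g(x):d(x)=t\}$ semi-algebraic.
\item Compactness of $S$ and lower semicontinuity make the minimum attained and positive.
\item Puiseux gives $h(t)\ge c\,t^{\alpha}$.
\end{itemize}
This avoids subdifferentials, the nonsmooth KL theorem and the KL/error-bound equivalence altogether. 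It also handles extended-valued l.s.c.\ $\phi$ at no extra cost. Convexity enters only along segments $[s,x]$: once to make $t\mapsto h(t)/t$ nondecreasing (which gives both monotonicity and $\alpha\ge1$), and once for the linear regime. What the KL route provides beyond this is a desingularizing function, i.e.\ a subgradient inequality rather than mere growth. That is the form needed for the complexity analysis of descent methods that motivates \cite{bolte2017error}.
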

We will also need the concept of metric sub-regularity:
\begin{definition}{Metric sub-regularity \cite{dontchev2009implicit}} A set-valued mapping $T: \Hi \mapsto 2^{\Hi}$ is called metrically sub-regular at $\bar{z}\in\Hi$ for $\bar{u}\in T(\bar{z})$ if there exists $\kappa\geq 0$ along with a neighborhood $\mathcal{Z}$ of $\bar{z}$ such that
    \begin{equation}
        (\forall z \in \mathcal{Z}), \quad \mathrm{dist}(z, T^{-1}(\bar{u})) \leq \kappa \mathrm{dist}(\bar u, T(z)).
    \end{equation}
\end{definition}
\subsection{Convergence of fixed point iterations}
We conlude this background section by presenting the necessary tools for the convergence analysis of fixed point iterations \cite{bauschke2017,liang2016convergence,cortild2025krasnoselskii,arakcheev2025opial}.
\paragraph{Convex setting}
Recall that a single valued operator \(T:\RR^N \to \RR^N\) is called monotone if \cite[Definition 20.1]{bauschke2017} 
\begin{equation}
\left(\forall x\in \RR^N\right), \left(\forall y\in\RR^N\right), 
\quad \langle x - y, Tx -  Ty \rangle \geq 0.
\end{equation}
It is called $\alpha$-averaged, with $\alpha \in (0,1)$, if $T$ is nonexpansive and there exists a nonexpansive operator $S:\RR^N \to \RR^N$ such that $T= (1-\alpha)\Id + \alpha S$ \cite[Definition 4.33]{bauschke2017}. The reflection of an operator is denoted as $R_T:=2T- \Id$ \cite{bauschke2017}. 
\begin{theorem}{Baillon-Haddad \cite[18.17]{bauschke2017}}
    Let $\psi:\RR^N \to \RR$ be a Fréchet differentiable convex function and $L>0$. Then $\nabla \psi$ is $L$-Lipschitz continuous if and only if $\nabla \psi$ is $1/L$-cocoercive.
\end{theorem}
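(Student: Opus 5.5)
The plan is to prove the two implications of the Baillon--Haddad theorem separately. The direction from cocoercivity to Lipschitz continuity is immediate. If $\langle \nabla\psi(x)-\nabla\psi(y), x-y\rangle \geq \frac1L \Vert \nabla\psi(x)-\nabla\psi(y)\Vert^2$, then the Cauchy--Schwarz inequality bounds the left-hand side by $\Vert \nabla\psi(x)-\nabla\psi(y)\Vert\,\Vert x-y\Vert$. Dividing by $\Vert \nabla\psi(x)-\nabla\psi(y)\Vert$, when it is nonzero, gives $\Vert \nabla\psi(x)-\nabla\psi(y)\Vert \leq L\Vert x-y\Vert$. This direction uses neither the convexity of $\psi$ nor any structure beyond the inequality itself.

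For the converse, assume $\nabla\psi$ is $L$-Lipschitz and $\psi$ is convex. I first establish the descent lemma
\begin{equation*}
\psi(y) \leq \psi(x) + \langle \nabla\psi(x), y-x\rangle + \tfrac{L}{2}\Vert y-x\Vert^2 .
\end{equation*}
To get it, I write $\psi(y)-\psi(x)=\int_0^1 \langle \nabla\psi(x+t(y-x)), y-x\rangle\,dt$, subtract $\langle\nabla\psi(x),y-x\rangle$, and bound the integrand using the Lipschitz property. Next, fix $x$ and consider $h_x(y) := \psi(y)-\langle \nabla\psi(x), y\rangle$. This function is convex, has $L$-Lipschitz gradient, and satisfies $\nabla h_x(x)=0$, so $x$ is a global minimizer of $h_x$; this is exactly where convexity enters. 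Applying the descent lemma to $h_x$ at an arbitrary point $y$ with the step $y - \frac1L \nabla h_x(y)$, and comparing with the minimum value $h_x(x)$, yields
\begin{equation*}
h_x(x) \leq h_x(y) - \tfrac{1}{2L}\Vert \nabla h_x(y)\Vert^2 .
\end{equation*}
Rewritten in terms of $\psi$, this reads
\begin{equation*}
\psi(y) \geq \psi(x) + \langle \nabla\psi(x), y-x\rangle + \tfrac{1}{2L}\Vert \nabla\psi(y)-\nabla\psi(x)\Vert^2 .
\end{equation*}

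The final step is to write the same inequality with $x$ and $y$ exchanged and add the two. The function values cancel, and what remains is $\langle \nabla\psi(x)-\nabla\psi(y), x-y\rangle \geq \frac1L\Vert\nabla\psi(x)-\nabla\psi(y)\Vert^2$, which is the claimed $1/L$-cocoercivity. The main obstacle is the middle step: recognizing that the Lipschitz bound alone is not enough, and that convexity must be used to guarantee that the stationary point $x$ of $h_x$ is a global minimizer, so that the descent lemma can be turned into a lower bound. The remaining steps are routine computations. An alternative route goes through the conjugate $\psi^*$: one shows $\psi^*$ is $\frac1L$-strongly convex, so that $\nabla\psi=(\partial\psi^*)^{-1}$ is the inverse of a strongly monotone operator. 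I prefer the elementary argument above because it is self-contained in finite dimension.
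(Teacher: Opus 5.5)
Your argument is correct. The paper gives no proof of its own; it imports the result from \cite[Corollary 18.17]{bauschke2017}. There it is derived, in a general real Hilbert space, from a list of equivalent conditions on a differentiable convex function whose pivot is the conjugate. The $L$-Lipschitz continuity of $\nabla\psi$ is shown to be equivalent to the $\frac{1}{L}$-strong convexity of $\psi^*$. Cocoercivity then follows because $\nabla\psi=(\partial\psi^*)^{-1}$ is the inverse of a $\frac{1}{L}$-strongly monotone operator. This is essentially the alternative you mention at the end.

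Your route is the classical elementary proof: descent lemma, tilting $\psi$ by $\langle\nabla\psi(x),\cdot\rangle$, using convexity to make the stationary point $x$ a global minimizer, then symmetrizing. Each step checks out, including the fundamental theorem of calculus along segments, which is legitimate because $\nabla\psi$ is continuous under the Lipschitz hypothesis. It is short and needs nothing beyond the gradient inequality for differentiable convex functions. Nothing in it is finite-dimensional, though, so it holds verbatim in any real Hilbert space, contrary to your closing remark.

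The duality route costs more machinery. In exchange, it delivers a whole family of equivalent characterizations at once: convexity of $\frac{L}{2}\Vert\cdot\Vert^2-\psi$, strong convexity of $\psi^*$, and cocoercivity.
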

It follows from \cite[Proposition 4.31(iii)]{bauschke2017} that $\nabla \psi$ is $1/L$-cocoercive if and only if $\frac{1}{L} \nabla \psi$ is $1/2$-averaged. 
The classical framework of convergence for averaged operators is that of Krasnosel'ski\u{\i}--Mann iterations.
\begin{proposition}{\cite[Proposition 5.16]{bauschke2017}}
Let $\alpha \in (0,1)$. Let $T:\RR^N \mapsto \RR^N$ be an $\alpha$-averaged operator such that $\Fix T \neq \emptyset$. Let $(\lambda_n)_{n\in\mathbb{N}}$ be a sequence in $[0,1/\alpha]$ such that $\sum_{n\in \mathbb{N}} \lambda_n(1-\alpha\lambda_n) = + \infty$, and let $x_0 \in \RR^N$. Set 
\begin{equation}
    (\forall n\in \mathbb{N}), \quad x_{n+1} = x_n + \lambda_n \left(T(x_n) - x_n \right).
\end{equation} 
Then the following hold:
\begin{enumerate}[label=(\roman*)]
    \item $(x_n)_{n\in\mathbb{N}}$ is Fejér monotone with respect to $\Fix T$
    \item $\left( T(x_n) - x_n \right)_{n\in\mathbb{N}}$ converges strongly to $0$.
    \item $(x_n)_{n \in \mathbb{N}}$ converges weakly (and thus strongly here) to a point in $\Fix T$.
\end{enumerate}
\end{proposition}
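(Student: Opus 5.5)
This is the classical Krasnosel'ski\u{\i}--Mann result. I would first reduce to the nonexpansive operator behind $T$, then run a Fejér-type argument. Since $T$ is $\alpha$-averaged, write $T=(1-\alpha)\Id+\alpha S$ with $S$ nonexpansive. Then $\Fix S=\Fix T$, and $T(x)-x=\alpha(S(x)-x)$, so the iteration reads
\[
x_{n+1}=x_n+\mu_n\,(S(x_n)-x_n),\qquad \mu_n:=\alpha\lambda_n\in[0,1].
\]
The assumption becomes $\sum_n \mu_n(1-\mu_n)=\alpha\sum_n\lambda_n(1-\alpha\lambda_n)=+\infty$.

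For (i), fix $x\in\Fix T$. Since $x_{n+1}=(1-\mu_n)x_n+\mu_n S(x_n)$, I would use the identity
\[
\Vert(1-\mu)a+\mu b\Vert^2=(1-\mu)\Vert a\Vert^2+\mu\Vert b\Vert^2-\mu(1-\mu)\Vert a-b\Vert^2,
\]
with $a=x_n-x$ and $b=S(x_n)-x$. Nonexpansiveness of $S$ gives $\Vert S(x_n)-x\Vert\le\Vert x_n-x\Vert$, hence
\[
\Vert x_{n+1}-x\Vert^2\le\Vert x_n-x\Vert^2-\mu_n(1-\mu_n)\Vert S(x_n)-x_n\Vert^2.
\]
This is Fejér monotonicity. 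Summing the inequality also yields $\sum_n\mu_n(1-\mu_n)\Vert S(x_n)-x_n\Vert^2\le\Vert x_0-x\Vert^2<\infty$.

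For (ii), I would show that $\Vert S(x_n)-x_n\Vert$ is nonincreasing. Indeed,
\[
S(x_{n+1})-x_{n+1}=S(x_{n+1})-S(x_n)+(1-\mu_n)(S(x_n)-x_n),
\]
and $\Vert S(x_{n+1})-S(x_n)\Vert\le\Vert x_{n+1}-x_n\Vert=\mu_n\Vert S(x_n)-x_n\Vert$, so the triangle inequality gives the claim. A nonincreasing nonnegative sequence whose weighted sum is finite with divergent weights $\sum\mu_n(1-\mu_n)=\infty$ must tend to $0$. Therefore $S(x_n)-x_n\to0$, and so $T(x_n)-x_n\to0$. This monotonicity step is the main subtlety. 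Without it we would only get a $\liminf$.

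For (iii), Fejér monotonicity makes $(x_n)$ bounded, so in $\RR^N$ it has a cluster point $\bar x$, say $x_{n_k}\to\bar x$. Since $T$ is continuous (it is nonexpansive) and $T(x_{n_k})-x_{n_k}\to0$, we get $T\bar x=\bar x$, so $\bar x\in\Fix T$. Applying Fejér monotonicity with $x=\bar x$, the sequence $\Vert x_n-\bar x\Vert$ is nonincreasing and has a subsequence tending to $0$. Hence the whole sequence converges to $\bar x$, which gives strong convergence; in infinite dimension this is where Opial's lemma would be used instead.
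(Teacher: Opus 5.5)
Your proof is correct and is essentially the standard argument behind the cited result; the paper itself gives no proof and only cites Bauschke--Combettes, Proposition 5.16, which is proved by rescaling to the nonexpansive $S$ with $\mu_n=\alpha\lambda_n$ and applying the Krasnosel'ski\u{\i}--Mann theorem, whose proof is your Fej\'er inequality, summability, and monotonicity of $\Vert S(x_n)-x_n\Vert$. The only difference is that in $\RR^N$ you replace demiclosedness and the Opial-type argument with Bolzano--Weierstrass and continuity of $T$, which is fully valid here.
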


We say that an operator $T$ is quasi nonexpansive if $\Fix T \neq \emptyset$, and $\Vert Tx-p \Vert \leq \Vert x - p \Vert$ for all $x\in \Hi$ and $p \in \Fix T$ \cite{cortild2025krasnoselskii}.
\begin{definition}
Let $T : \Hi \to \Hi$ be an operator on a Hilbert space $H$.  
The operator $\Id - T$ is said to be \emph{demiclosed at $0$} if
\begin{equation*}
x_n \rightharpoonup x, \quad (\Id - T)x_n \to 0 
\quad \Longrightarrow \quad x \in \operatorname{Fix}(T).
\end{equation*}
\end{definition}
A family $(T_k)$ of operators is asymptotically demiclosed at $0$ if for every sequence $(u_k) \in \Hi$, such that $u_k \rightharpoonup u$ and $T_k u_k -u_k \rightarrow 0$, it follows that $u \in \bigcap_{k\geq 1} \Fix T_k$.
For quasi nonexpansive operators, the convergence to fixed point was studied for sequences generated by the following algorithm in \cite{cortild2025krasnoselskii} as a generalization of Krasnosel'ski\u{\i}--Mann iterations:
\begin{align}
    y_k & = x_k + \alpha_k (x_k - x_{k-1}) + \epsilon_k \nonumber\\
    z_k &= x_k + \beta_k (x_k - x_{k-1}) + \rho_k \nonumber\\
    x_{k+1} &= (1-\lambda_k)y_k + \lambda_k T_k z_k + \theta_k \label{eq:km_peypouquet}
\end{align}
where $T_k : \Hi \to \Hi$, $\alpha_k$, $\beta_k$, $\lambda_k \in [0,1]$, and $\epsilon_k,\rho_k,\theta_k \in \Hi$ for $k\geq 1$ and $x_0,x_1 \in \Hi$ (set equal). Define also for all $k$, $\mu_k := (1-\lambda_k)\alpha_k + \lambda_k \beta_k$. Authors of \cite{cortild2025krasnoselskii} showed the convergence of $(x_k,y_k,z_k)$ to a fixed point of $T$ under the following assumptions on the parameters:
\begin{assumption} \label{ass:km}
    The sequences $(\alpha_k)$, $(\beta_k)$, $(\lambda_k)$ and $(\mu_k)$, along with the constraints $\alpha = \inf \alpha_k$, $A = \sup \alpha_k$, $\lambda = \inf \lambda_k,$ $\Lambda =\sup \lambda_k$ and $M = \sup \mu_k$, satisfy: $\alpha,A,M \in [0,1)$, $\lambda, \Lambda \in (0,1)$, $0 \leq \beta_k \leq 1$, and $\mu_k \leq \mu_{k+1}$ for all $k\geq 1$. 
    Moreover, parameters satisfy the following compatibility condition
    \begin{equation}
        \sup_{k\geq 1} \left[ (1-\lambda_k) \alpha_k(1+\alpha_k) + \lambda_k \beta_k(1+\beta_k) + (\lambda_k^{-1}-1) \alpha_k (1-\alpha_k) - (\lambda_{k-1}^{-1}-1)(1-\alpha_{k-1})\right] \geq 0.
    \end{equation}
\end{assumption}
\begin{theorem} \label{th:peypouquet}Let $T_k:\Hi \to \Hi$ be a family of quasi-nonexpansive operators such that $F:= \bigcap_{k\geq 1} \Fix T_k \neq \emptyset$, let Assumption \ref{ass:km} hold, and suppose that the error sequences $(\epsilon_k),(\rho_k)$ and $(\theta_k)$ are summable. Assume, moreover that the family $(\Id- T_k)_k$ is asymptotically demiclosed at $0$. If $(x_k,y_k,z_k)$ is generated by algorithm \eqref{eq:km_peypouquet}, then $(x_k,y_k,z_k)$ converges to $(p^*,p^*,p^*)$ with $p^* \in F$.
\end{theorem}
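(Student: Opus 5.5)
The plan is an Opial-type argument: first show that $\Vert x_k - p\Vert$ converges for every $p \in F$, then show that every cluster point of $(x_k)$ lies in $F$. Throughout, fix $p\in F$ and set $h_k := \Vert x_k - p\Vert^2$.

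\textbf{Step 1: a one-step inequality, ignoring errors.} First take $\epsilon_k=\rho_k=\theta_k=0$. I will use two elementary Hilbert identities.
\begin{itemize}
\item The convex-combination identity $\Vert (1-\lambda)a+\lambda b\Vert^2=(1-\lambda)\Vert a\Vert^2+\lambda\Vert b\Vert^2-\lambda(1-\lambda)\Vert a-b\Vert^2$, applied with $a=y_k-p$ and $b=T_kz_k-p$.
\item The inertial identity $\Vert x_k+\alpha(x_k-x_{k-1})-p\Vert^2=(1+\alpha)h_k-\alpha h_{k-1}+\alpha(1+\alpha)\Vert x_k-x_{k-1}\Vert^2$, applied with $\alpha=\alpha_k$ to $y_k$ and with $\alpha=\beta_k$ to $z_k$.
\end{itemize}
Quasi-nonexpansiveness gives $\Vert T_kz_k-p\Vert\le\Vert z_k-p\Vert$, since $p \in \Fix T_k$. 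Combining these, and writing $y_k-T_kz_k=(y_k-x_{k+1})/\lambda_k$, I expect
\begin{equation*}
h_{k+1}-h_k-\mu_k(h_k-h_{k-1})\le \big[(1-\lambda_k)\alpha_k(1+\alpha_k)+\lambda_k\beta_k(1+\beta_k)\big]\Vert x_k-x_{k-1}\Vert^2-(\lambda_k^{-1}-1)\Vert x_{k+1}-y_k\Vert^2 .
\end{equation*}

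\textbf{Step 2: absorbing the inertial term.} Next I lower-bound $\Vert x_{k+1}-y_k\Vert^2$ through $\Vert x_{k+1}-x_k-\alpha_k(x_k-x_{k-1})\Vert^2$, using Young's inequality with weight $\alpha_k$. This produces a term $-(\lambda_k^{-1}-1)(1-\alpha_k)\Vert x_{k+1}-x_k\Vert^2$ and a compensating $+(\lambda_k^{-1}-1)\alpha_k(1-\alpha_k)\Vert x_k-x_{k-1}\Vert^2$. Shifting indices, I form the Lyapunov sequence
\begin{equation*}
E_k:=h_k-\mu_{k-1}h_{k-1}+(\lambda_{k-1}^{-1}-1)(1-\alpha_{k-1})\Vert x_k-x_{k-1}\Vert^2 .
\end{equation*}
The compatibility condition in Assumption~\ref{ass:km}, which must be read as a strictly negative supremum $-\delta<0$ for the argument to close, should then give $E_{k+1}\le E_k-\delta\Vert x_k-x_{k-1}\Vert^2$. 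Here monotonicity of $\mu_k$ is used to replace $\mu_k h_k$ by $\mu_{k-1}h_k$ where needed.

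\textbf{Step 3: consequences of the Lyapunov decrease.} Since $M<1$, the sequence $E_k$ bounds $h_k$ from below up to a geometric tail. Hence $(h_k)$ is bounded and $\sum_k\Vert x_k-x_{k-1}\Vert^2<\infty$. The Alvarez--Attouch lemma for $\theta_{k+1}\le\mu_k\theta_k+\delta_k$ with $\sum_k\delta_k<\infty$, applied to the positive part of $h_{k+1}-h_k$, gives convergence of $h_k$ for every $p\in F$. The same estimates give $\Vert x_{k+1}-y_k\Vert\to0$, and then $y_k-x_k\to0$ and $z_k-x_k\to0$. Finally $\Vert T_kz_k-z_k\Vert\le\lambda^{-1}\Vert x_{k+1}-y_k\Vert+\Vert y_k-z_k\Vert\to0$.

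\textbf{Step 4: the summable errors.} I expect this to be the main obstacle. The errors enter the Step~1 expansion as cross terms of the form $2\Vert e_k\Vert\,(\Vert x_k-p\Vert+\Vert x_{k-1}-p\Vert+\dots)$ plus $O(\Vert e_k\Vert^2)$. Boundedness is no longer available a priori, so I would first prove it by a discrete Gronwall argument. Concretely, $E_{k+1}\le E_k+C\Vert e_k\Vert\sqrt{\max_{j\le k}h_j}+C\Vert e_k\Vert^2$ with $e_k$ summable yields $\sup_k h_k<\infty$. After that the error terms are summable and are simply carried as the $\delta_k$ in the Alvarez--Attouch lemma, and Step~3 goes through.

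\textbf{Step 5: conclusion.} Let $u$ be any cluster point, with $x_{k_j}\to u$. Then $z_{k_j}\to u$ and $T_{k_j}z_{k_j}-z_{k_j}\to0$, so asymptotic demiclosedness gives $u\in F$. Since $\Vert x_k-p\Vert$ converges for every $p\in F$, Opial's lemma yields $x_k\to p^*\in F$. Finally $y_k-x_k\to0$ and $z_k-x_k\to0$ give convergence of $(x_k,y_k,z_k)$ to $(p^*,p^*,p^*)$.
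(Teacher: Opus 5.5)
The paper gives no proof of this theorem; it is quoted from \cite{cortild2025krasnoselskii}, so there is no internal argument to compare against. Judged on its own, your outline is the standard route for inertial Krasnosel'ski\u{\i}--Mann schemes: the one-step inequality, the energy $E_k$, the Alvarez--Attouch lemma, then Opial. It closes, and the computations are correct. The Young step reproduces exactly the bracket in the compatibility condition of Assumption~\ref{ass:km}. You are also right that the condition must read $\sup_k[\cdots]=-\delta<0$. The printed ``$\geq 0$'' is a misprint: it would exclude even the plain iteration $\alpha_k=\beta_k=0$, where the bracket equals $-(\lambda_{k-1}^{-1}-1)<0$. One minor slip: monotonicity of $(\mu_k)$ is used as $-\mu_k h_{k-1}\le-\mu_{k-1}h_{k-1}$, not on $h_k$.

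\textbf{Step 4.} The Gronwall step should be written out, and one detail matters. Set $w_{k+1}:=x_{k+1}-\theta_k$. Bound the error cross terms using quasi-nonexpansiveness, $\Vert w_{k+1}-p\Vert\le(1-\lambda_k)\Vert y_k-p\Vert+\lambda_k\Vert z_k-p\Vert$ and $\Vert w_{k+1}-y_k\Vert\le\lambda_k(\Vert y_k-p\Vert+\Vert z_k-p\Vert)$. This way no $h_{k+1}$ appears on the right-hand side. An upper bound on $E_k$ alone does not bound $h_k$; what closes the argument is $h_k\le\mu_{k-1}h_{k-1}+E_k$. Write $H_k=\max_{j\le k}h_j$, $\eta_k=\Vert\epsilon_k\Vert+\Vert\rho_k\Vert+\Vert\theta_k\Vert$ and $S=\sum_k\eta_k$. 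You then get $h_k\le MH_{k-1}+B+CS\sqrt{H_{k-1}}$, hence $H_k\le\max\{H_{k-1},t^*\}$, where $t^*$ is the largest root of $t=Mt+B+CS\sqrt{t}$. This is exactly where $M<1$ is needed.

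\textbf{Step 5.} Here you apply asymptotic demiclosedness along a subsequence $(z_{k_j},T_{k_j})$. The definition recorded in the paper only quantifies over full sequences $u_k\rightharpoonup u$. You need, and should state, the subsequential version: $u_{k_j}\to u$ and $T_{k_j}u_{k_j}-u_{k_j}\to0$ imply $u\in F$.

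This is not cosmetic, because with the full-sequence version the statement is false. In $\RR^2$ with coordinates $(\xi_1,\xi_2)$, take $\lambda_k=\tfrac12$, $\alpha_k=\beta_k=0$ and no errors. Let $T_k$ be rotations by angles $\theta_m\in(0,\pi)$ with $\theta_m\to0$, $\sum\theta_m=\infty$ and $\sum\theta_m^2<\infty$. The exception: whenever the current iterate lies in the next half-plane of the cycle $\{\xi_1\ge0\},\{\xi_2\ge0\},\{\xi_1\le0\},\{\xi_2\le0\}$, use the projection onto that half-plane instead.
\begin{itemize}
\item All $T_k$ are quasi-nonexpansive and $F=\{0\}$.
\item Each projection occurs infinitely often, so every convergent full sequence with vanishing residuals tends to $0$. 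The full-sequence demiclosedness therefore holds.
\item The projection steps are stationary. Each rotation step multiplies $x_k$, viewed as a complex number, by $\cos(\theta_m/2)e^{i\theta_m/2}$.
\item Hence the iterates circle forever with $\Vert x_k\Vert\to r>0$, and the conclusion fails.
\end{itemize}
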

\noindent If $T$ is a contraction then it trivially satisfies the assumptions of this theorem.
\paragraph{Non-convex setting.}
We conclude this reminder by a local convergence result which we can use if the approximations are only locally contractive around their fixed points. The assumptions of this theorem cannot be verified without constructing the approximations explicitly, thus it has an information purpose only.
\begin{theorem}{Local convergence of fixed point iteration\cite{stepleman1975characterization,ostrowski1960solutions,ortega1966nonlinear}.}
Let $T:\RR^N \mapsto \RR^N$ be differentiable at a point $x^*\in\RR^N$. Suppose that $x^*$ is a fixed point of $T$. Let $J_T(x^*)$ denote the Jacobian of $T$ at $x^*$, and denote by $\rho(J_T(x^*))$ its spectral radius. If $\rho(J_T(x^*))<1$, then there exists a neighborhood $U$ of $x^*$, such that for any $x_0 \in U$, the sequence defined by the iterations
\begin{equation}
    (\forall n \in \mathbb{N}), \quad x_{n+1} = T(x_n)
\end{equation}
converges to $x^*$.
\end{theorem}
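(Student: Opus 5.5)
The plan is to use the classical Ostrowski argument: find a norm in which the linearization $J_T(x^*)$ is a strict contraction, then absorb the first-order remainder of $T$ near $x^*$.

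\textbf{Step 1 (adapted norm).} Write $J := J_T(x^*)$ and $r := \rho(J) < 1$. Fix $\delta>0$ with $q := r + 2\delta < 1$. We need a vector norm $\vertiii{\cdot}$ on $\RR^N$ whose induced operator norm satisfies $\vertiii{J} \leq r+\delta$. We obtain it by taking a (complex) Jordan or Schur decomposition $J = P (D + U) P^{-1}$, with $D$ diagonal and $U$ strictly upper triangular. Conjugate further by $S_t = \mathrm{diag}(1,t,\dots,t^{N-1})$, so that the off-diagonal part becomes $O(t)$, and choose $t$ small enough that it has norm at most $\delta$. Then define $\vertiii{x} := \Vert S_t^{-1} P^{-1} x \Vert_\infty$ (or the Euclidean norm on $\mathbb{C}^N$, restricted to $\RR^N$). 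Equivalently, one can invoke Gelfand's formula $\lim_k \Vert J^k\Vert^{1/k} = r$ and set $\vertiii{x} := \sum_{k=0}^{m-1} (r+\delta)^{-k}\Vert J^k x\Vert$ for a suitable $m$. This construction is the only nontrivial step. Its main obstacle is handling complex eigenvalues and non-diagonalizable $J$; the Gelfand-formula version sidesteps both cleanly, so I would use it.

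\textbf{Step 2 (local linearization).} Differentiability of $T$ at $x^*$ and $T(x^*) = x^*$ give
\begin{equation*}
T(x) - x^* = J(x-x^*) + R(x), \qquad \vertiii{R(x)} = o(\vertiii{x-x^*}).
\end{equation*}
All norms on $\RR^N$ are equivalent, so we may measure the remainder in $\vertiii{\cdot}$. Hence there is $\eta>0$ such that $\vertiii{R(x)} \leq \delta \vertiii{x-x^*}$ whenever $\vertiii{x-x^*} \leq \eta$.

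\textbf{Step 3 (invariance and contraction).} Let $U := \{x : \vertiii{x-x^*} < \eta\}$, which is a neighborhood of $x^*$ by norm equivalence. For $x\in U$,
\begin{equation*}
\vertiii{T(x)-x^*} \leq \vertiii{J}\,\vertiii{x-x^*} + \delta\vertiii{x-x^*} \leq q\,\vertiii{x-x^*}.
\end{equation*}
Since $q<1$, $T(U)\subset U$. By induction, $\vertiii{x_n - x^*} \leq q^n \vertiii{x_0-x^*}$ for every $x_0\in U$, so $x_n \to x^*$ at a linear rate. Note that only differentiability at the single point $x^*$ is used; no continuity of $T$ elsewhere is needed.
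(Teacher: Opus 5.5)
Your proposal is correct, and it is the classical Ostrowski argument: build a norm with $\vertiii{J_T(x^*)}\leq \rho(J_T(x^*))+\delta$, then absorb the $o(\Vert x-x^*\Vert)$ remainder to get a $q$-contraction toward $x^*$ on a small ball. Either of your norm constructions works; in the Gelfand version, choosing $m$ with $\Vert J^m\Vert\leq (r+\delta)^m$ gives $\vertiii{Jx}\leq (r+\delta)\vertiii{x}$ by shifting the index in the sum. The paper states this theorem without proof, deferring to \cite{ostrowski1960solutions,ortega1966nonlinear}, and your argument is the standard one those references rely on.
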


\section{Approximations of the proximal operator}
In this section, we present characterizations of inexact proximal operators, with as much detail as possible. %
In the following, we will assume that
\begin{assumption} \label{ass:weaklyconvex}
    Let $L_\psi>0$. $\phi$ is a proper, l.s.c., $\rho = \left(1-\frac{1}{L_\psi}\right)$-weakly convex function with prox-bound $\lambda_\phi>1$.
\end{assumption}
This assumption is mildly restrictive: if $\rho\geq 1$, then $\prox_{\gamma \phi}$ is Lipschitz continuous for $\gamma<1/\rho$.%
\subsection{Approximation of type (a)}
The approximation of type (a) is the simplest one, and is defined by adding an additive error outside the computation of the proximal operator.
\begin{definition}{\textbf{Type (a) approximation}.} \label{def:approx_typea}
    We say that $\g_a$ is an $\epsilon$-type (a) approximation of $\prox_\phi$ if for all $y\in\Hi$ there exists $e(y)$ such that $\Vert e(y) \Vert \leq \epsilon$ and,
    \begin{equation}
        \g_a(y) = \prox_\phi(y) + e(y).
    \end{equation}
\end{definition}
Approximations of type (a) verify easily Definition \ref{def:qualitative}. Indeed, Definition \ref{def:approx_typea} implies directly that
\begin{equation} \label{eq:sigma_typea}
    (\forall y \in \RR^N), \quad \Vert \g_a(y) - \prox_\phi(y) \Vert = \Vert e(y) \Vert \leq \epsilon.
\end{equation}
Commonly in the convex optimization literature, $\g_a$ is written directly this way, for instance in \cite{salzo2012inexact,villa2013accelerated}, but with a square root on $\epsilon$.
Such bound can be obtained when $\g(y)$ is a type 1 approximation \cite{salzo2012inexact,villa2013accelerated} of $\prox_\phi(y)$, where $\phi$ is a proper, l.s.c., convex potential \cite{salzo2012inexact} and the error is measured in $\ell_2$-norm. We reproduce the proof: $\Phi$ is $1$-strongly convex and $0\in \partial \Phi \left(\prox_\phi(y)\right)$, thus
\begin{equation}
    \Phi(\g(y)) - \Phi(\prox_\phi(y)) \geq \frac{1}{2} \Vert \g(y) - \prox_\phi(y) \Vert^2.
\end{equation}
Then using the $\epsilon$-solution properties, we get 
\begin{equation}
    \Phi(\g(y))-\Phi(\prox_\phi(y))\leq \epsilon,
\end{equation}
which yields the desired result.

\paragraph{Lipschitz continuity of $\g_a$}
Without surprise, the Lipschitz continuity of $\g_a$ is that of $\prox_\phi$.
\begin{proposition} \label{prop:lipschitz_a}
    Suppose that Assumption \ref{ass:weaklyconvex} holds. Let $\epsilon>0$  and let $\g_a$ be an $\epsilon$-type (a) approximation of $\prox_\phi$. Then,
    \begin{equation}
        (\forall x,y \in \Hi), \quad \Vert \g_a(x)-\g_a(y) \Vert \leq L_\psi \Vert x-y \Vert + 2\epsilon.
    \end{equation}
\end{proposition}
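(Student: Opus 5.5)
The plan is a direct triangle-inequality argument combined with the Lipschitz continuity of the exact proximal operator. By Definition \ref{def:approx_typea}, for any $x,y\in\Hi$ there are errors $e(x),e(y)$ with $\Vert e(x)\Vert\leq\epsilon$ and $\Vert e(y)\Vert\leq\epsilon$ such that $\g_a(x)=\prox_\phi(x)+e(x)$ and $\g_a(y)=\prox_\phi(y)+e(y)$. Subtracting these two identities gives
\begin{equation*}
\g_a(x)-\g_a(y) = \big(\prox_\phi(x)-\prox_\phi(y)\big) + \big(e(x)-e(y)\big),
\end{equation*}
so that the triangle inequality yields $\Vert \g_a(x)-\g_a(y)\Vert \leq \Vert \prox_\phi(x)-\prox_\phi(y)\Vert + \Vert e(x)\Vert + \Vert e(y)\Vert$.

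The only substantive step is to bound the first term by $L_\psi\Vert x-y\Vert$. Under Assumption \ref{ass:weaklyconvex}, $\phi$ is proper, l.s.c.\ and $\rho$-weakly convex with $\rho = 1-1/L_\psi<1$, and its prox-bound satisfies $\lambda_\phi>1$. We apply the equivalence theorem from \cite[Proposition 2]{gribonval2020characterization} recalled above with $\lambda=1\in(0,\lambda_\phi)$ and $L=L_\psi$. Weak convexity with modulus $1-1/L_\psi$ is precisely the statement that $\phi-(1/L_\psi-1)\frac{1}{2}\Vert\cdot\Vert^2$ is convex, which is condition 2 of that theorem up to the normalization of the quadratic. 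Hence $\prox_\phi$ is single-valued and $L_\psi$-Lipschitz.

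This identification of constants is the one point requiring care: the factor $1/2$ appearing in the paper's statement of condition 2 must match the convention in the definition of weak convexity. I would simply invoke the paper's stated equivalence $L=\frac{1}{1-\rho}$. One could also check it directly: $\phi+\frac{\rho}{2}\Vert\cdot\Vert^2$ convex implies that $\Phi$ is $(1-\rho)$-strongly convex, so that $\prox_\phi = (\Id+\partial\phi)^{-1}$, and $\partial\phi+\rho\Id$ is monotone. Writing $(1-\rho)\Vert p-q\Vert^2\leq\langle x-y,p-q\rangle$ for $p=\prox_\phi(x)$ and $q=\prox_\phi(y)$ then gives the $\frac{1}{1-\rho}$-Lipschitz bound by Cauchy--Schwarz.

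Finally, bounding $\Vert e(x)\Vert + \Vert e(y)\Vert \leq 2\epsilon$ and combining the two estimates yields the claimed inequality
\begin{equation*}
\Vert \g_a(x)-\g_a(y)\Vert \leq L_\psi\Vert x-y\Vert + 2\epsilon.
\end{equation*}
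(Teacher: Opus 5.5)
Your proof is correct and follows the argument the paper intends when it calls the result straightforward: split $\g_a(x)-\g_a(y)$ into the exact-prox difference plus $e(x)-e(y)$, bound the first term using the $L_\psi$-Lipschitz continuity of $\prox_\phi$, and bound the second by $2\epsilon$. Your direct monotonicity check that $\prox_\phi$ is $\frac{1}{1-\rho}=L_\psi$-Lipschitz is a useful addition, since it avoids relying on how the quadratic term is normalized in the paper's restatement of the cited equivalence.
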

\begin{proof}
    Straightforward.
\end{proof}

\paragraph{Admissibility of $\g_a$.} The existence of fixed points is not guaranteed unless we assume further structure on the additive error $e$. It is reasonable to assume that they do, and in this context we can characterize them w.r.t. to those of $\prox_\phi$. For these results, we have to assume that $\phi$ is convex and coercive.
\begin{proposition}Suppose that $\phi$ is proper, l.s.c., convex and coercive. Suppose also that there exists $\overline{\epsilon}>0$ such that for all $\epsilon \leq \overline{\epsilon}$, $\g_a^\epsilon$ admits fixed points, and that for every sequence $\epsilon_n \downarrow 0$, and every choice $y_n \in \Fix \g^\epsilon_a$, $(y_n)$ is bounded. Then,
    \begin{equation}
        \limsup_{\epsilon \rightarrow 0^+} \Fix \g^\epsilon_a \subseteq \Fix \prox_\phi.
    \end{equation}
\end{proposition}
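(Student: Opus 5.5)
Take a point $\bar y$ in the Painlevé--Kuratowski upper limit $\limsup_{\epsilon\to0^+}\Fix \g_a^\epsilon$ and show that $\bar y=\prox_\phi(\bar y)$. By definition there is a sequence $\epsilon_n\downarrow 0$ (with $\epsilon_n\le\overline{\epsilon}$) and points $y_n\in\Fix\g_a^{\epsilon_n}$ such that a subsequence, still denoted $(y_n)$, converges to $\bar y$. The boundedness hypothesis in the statement guarantees that such cluster points exist whenever fixed points are selected along $\epsilon_n\downarrow0$, so the upper limit is nonempty. The argument itself only needs the existence of a convergent subsequence. In finite dimension $\Hi=\RR^N$, Bolzano--Weierstrass supplies one.

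The key step is the fixed point relation of the type (a) approximation. Since $y_n=\g_a^{\epsilon_n}(y_n)=\prox_\phi(y_n)+e_n(y_n)$ with $\Vert e_n(y_n)\Vert\le\epsilon_n$, we get
\begin{equation*}
\Vert y_n-\prox_\phi(y_n)\Vert\le\epsilon_n\to0 .
\end{equation*}
Because $\phi$ is proper, l.s.c. and convex, $\prox_\phi$ is single-valued and nonexpansive; this is the case $L_\psi=1$ of Proposition \ref{prop:lipschitz_a}'s setting, or the Moreau characterization recalled above. Hence
\begin{equation*}
\Vert \bar y-\prox_\phi(\bar y)\Vert\le \Vert \bar y-y_n\Vert+\Vert y_n-\prox_\phi(y_n)\Vert+\Vert\prox_\phi(y_n)-\prox_\phi(\bar y)\Vert\le 2\Vert\bar y-y_n\Vert+\epsilon_n\to0,
\end{equation*}
which gives $\bar y\in\Fix\prox_\phi$. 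Equivalently, $\Id-\prox_\phi$ is continuous, so it is demiclosed at $0$, and the conclusion is the demiclosedness statement from the background section.

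Coercivity and convexity of $\phi$ enter in two places. First, they ensure $\Fix\prox_\phi=\argmin\phi$ is nonempty, so the inclusion is not vacuous. Second, they make the boundedness hypothesis natural: near-fixed points satisfy $y_n-\prox_\phi(y_n)\in\partial\phi(\prox_\phi(y_n))$ with norm at most $\epsilon_n$, which places $\prox_\phi(y_n)$ in a sublevel-type set, bounded by coercivity. I do not expect a real obstacle. The only delicate point is being precise about what the $\limsup$ of sets means, namely taking cluster points of arbitrary selections $y_n\in\Fix\g_a^{\epsilon_n}$ along arbitrary sequences $\epsilon_n\downarrow0$, and extracting a convergent subsequence, which is where boundedness is used.
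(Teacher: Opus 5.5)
Your proof is correct and is essentially the paper's argument: for fixed points $y_n\in\Fix \g_a^{\epsilon_n}$ converging to $\bar y$, the bound $\Vert y_n-\prox_\phi(y_n)\Vert\le\epsilon_n$ and the nonexpansiveness of $\prox_\phi$ give $\Vert \bar y-\prox_\phi(\bar y)\Vert\le 2\Vert\bar y-y_n\Vert+\epsilon_n\to 0$. Your side remarks also hold up. For a point already in the outer limit, boundedness is not needed and only guarantees that the outer limit is nonempty. Moreover, for convex coercive $\phi$ the boundedness hypothesis is automatic once $\epsilon_n<\alpha$: combine the linear growth $\phi(x)\ge\alpha\Vert x\Vert-\beta$ with $\phi(\prox_\phi(y_n))-\min\phi\le\epsilon_n\Vert \prox_\phi(y_n)-\bar x\Vert$ for $\bar x\in\argmin\phi$.
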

\begin{proof}
    Set $\epsilon_n \downarrow 0^+$, and $y_n \in \Fix \g^{\epsilon_n}_a$. By assumption, there exists a subsequence $y_{n_k} \rightarrow_{n_k\rightarrow+\infty} y \in \Hi$.
    We have, using Lipschitz continuity of $\prox_\phi$ and the definition of $\g_a$, that
    \begin{align*}
        \Vert y - \prox_\phi(y) \Vert & = \Vert y -y_{n_k} + y_{n_k} - \prox_\phi(y_{n_k}) + \prox_\phi(y_{n_k}) - \prox_\phi(y) \Vert \\
        & \leq 2 \Vert y - y_{n_k} \Vert + \epsilon_{n_k} \\
        & \rightarrow 0 \text{ as } n_k \rightarrow + \infty.
    \end{align*}
    Hence,
    \begin{equation*}
        \limsup_{\epsilon \rightarrow 0^+} \Fix \g^\epsilon_a \subseteq \Fix \prox_\phi.
    \end{equation*}
\end{proof}
The assumption of bounded sequence of fixed points is strong, as it may not hold without structure on the additive error or on $\phi$. It can hold for instance if $\phi$ is strongly convex, by
\begin{equation*}
    (\forall y^\epsilon \in \Fix \g^\epsilon_a), \quad \Vert y^\epsilon - y^* \Vert \leq \frac{\epsilon}{1-L_\psi},
\end{equation*}
where $y^*$ is the unique fixed point of $\prox_\phi$. Or if $\prox_\phi-x$ is metrically sub-regular at $0$, i.e. if for a neighborhood $\mathcal{N}$ of $\Fix\prox_\phi$, we had 
\begin{equation*}
    \forall z \in \mathcal{N}, \quad \mathrm{dist}(z,\Fix \prox_\phi) \leq \kappa \Vert z -\prox_\phi(z)\Vert.
\end{equation*}
In this setting, if the fixed points of $\g_a^\epsilon$ are sufficient close to those of $\prox_\phi$ their distance to $\Fix \prox_\phi$ would be bounded by $\kappa \epsilon$.

\subsection{Approximations of type (b)}
The approximation of type (b) is defined by adding a residual error in the proximal inclusion in the subdifferential. The norm of this residual error is controlled by $\epsilon$.
\begin{definition}{\textbf{Type (b) approximation.}} \label{def:approx_typeb} We say that $\g_b$ is an $\epsilon$-type (b) approximation of $\prox_\phi$ if for all $y\in\Hi$ there exists $r(y)$ such that $\Vert r(y) \Vert \leq \epsilon$ and,
    \begin{equation}
        \g_b(y) = \prox_\phi(y + r(y)).
    \end{equation}
\end{definition}
\noindent This characterization implies
\begin{equation}
    \left(y+r(y)\right) -\g_b(y) \in \partial \phi(\g_b(y))
\end{equation}
If $\phi$ is convex, then this is equivalent to saying 
\begin{equation}
    \g_b(y) = \prox_\phi(y+r(y)).
\end{equation} From the definition, we derive immediately the quality of the approximation.
\begin{proposition} \label{prop:sigma_b}
    Suppose that Assumption \ref{ass:weaklyconvex} holds. Let $\g_b:\RR^N \mapsto \RR^N$ be an $\epsilon$-type (b) approximation of $\prox_\phi$. Then,
    \begin{equation}
        \Vert \g_b(x) - \prox_\phi(x) \Vert \leq L_\psi \epsilon.
    \end{equation}
\end{proposition}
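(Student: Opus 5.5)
The bound follows directly from Definition \ref{def:approx_typeb} combined with the Lipschitz continuity of $\prox_\phi$ granted by Assumption \ref{ass:weaklyconvex}. Under that assumption, $\phi$ is $\rho$-weakly convex with $\rho = 1-1/L_\psi<1$. The equivalence recalled from \cite[Proposition 2]{gribonval2020characterization} (with $\lambda=1<\lambda_\phi$) then tells us that $\prox_\phi$ is single-valued and $L_\psi$-Lipschitz on $\Hi$, where $L_\psi = 1/(1-\rho)$.

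Fix $x\in\Hi$. By Definition \ref{def:approx_typeb}, there is a residual $r(x)$ with $\Vert r(x)\Vert\leq\epsilon$ and $\g_b(x)=\prox_\phi(x+r(x))$. We then write
\begin{equation*}
\Vert \g_b(x)-\prox_\phi(x)\Vert = \Vert \prox_\phi(x+r(x))-\prox_\phi(x)\Vert \leq L_\psi \Vert r(x)\Vert \leq L_\psi\epsilon,
\end{equation*}
which is the claim. Since $x$ is arbitrary, this is a uniform bound, so Definition \ref{def:qualitative} holds with $\sigma(\epsilon)=L_\psi\epsilon$.

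The only delicate step is justifying that $\prox_\phi$ is genuinely single-valued and $L_\psi$-Lipschitz everywhere, rather than merely a selection of a multivalued map. For this we need $\Phi_1 = \phi + \frac12\Vert\cdot-y\Vert^2$ to be $(1-\rho)$-strongly convex, which holds because $\rho<1$; its minimizer then exists (by prox-boundedness and lower semicontinuity) and is unique. Alternatively, the Lipschitz estimate can be derived directly from the subdifferential inclusion: if $z_i=\prox_\phi(y_i)$, then $y_i-z_i\in\partial\phi(z_i)$, and the hypomonotonicity of $\partial\phi$ gives $\langle (y_1-z_1)-(y_2-z_2), z_1-z_2\rangle\geq -\rho\Vert z_1-z_2\Vert^2$. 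This yields $(1-\rho)\Vert z_1-z_2\Vert^2\leq\langle y_1-y_2,z_1-z_2\rangle$, and Cauchy--Schwarz then gives $\Vert z_1-z_2\Vert\leq L_\psi\Vert y_1-y_2\Vert$. Either route fills the gap, after which the argument is a single line.
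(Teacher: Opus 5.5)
Your proof is correct and matches the paper's argument: the paper's proof simply invokes the $L_\psi$-Lipschitz continuity of $\prox_\phi$ at the points $x+r(x)$ and $x$, and the same one-line computation reappears in its later lemma on type (b) approximations. Your extra derivation of $\Vert \prox_\phi(y_1)-\prox_\phi(y_2)\Vert \leq \frac{1}{1-\rho}\Vert y_1-y_2\Vert$ from the hypomonotonicity of $\partial\phi$ is also correct, and it spells out the Lipschitz property that the paper takes directly from \cite{gribonval2020characterization}.
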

\begin{proof}
    Invoke Lipschitz continuity of $\prox_\phi$.
\end{proof}

\paragraph{Lipschitz continuity of $\g_b$.} The regularity of $\g_b$ is fairly straightforward, in the same manner as the regularity of $\g_a$.
\begin{proposition} \label{prop:lipschitz_b}
    Let $\phi$ be a $1-1/L_\psi$-weakly convex function with prox-bound $\lambda_\phi>1$, and $L_\psi>0$. Let $\epsilon>0$  and let $\g_b$ be an $\epsilon$-type (b) approximation of $\prox_\phi$. Then,
    \begin{equation}
        (\forall x,y \in \Hi), \quad \Vert \g_b(x)-\g_b(y) \Vert \leq L_\psi \left(\Vert x-y \Vert + 2\epsilon\right).
    \end{equation}
\end{proposition}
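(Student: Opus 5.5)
The plan is to unfold the definition of a type (b) approximation and then use the Lipschitz continuity of $\prox_\phi$ once, followed by the triangle inequality. Fix $x,y\in\Hi$. By Definition \ref{def:approx_typeb}, there are residuals $r(x),r(y)$ with $\Vert r(x)\Vert\leq\epsilon$ and $\Vert r(y)\Vert\leq\epsilon$ such that
\begin{equation*}
\g_b(x)=\prox_\phi(x+r(x)), \qquad \g_b(y)=\prox_\phi(y+r(y)).
\end{equation*}

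The first step is to justify that $\prox_\phi$ is single-valued and $L_\psi$-Lipschitz on all of $\Hi$. Under the stated hypotheses, $\phi$ is $(1-1/L_\psi)$-weakly convex with prox-bound $\lambda_\phi>1$, so the problem defining $\prox_\phi(z)$ has a strongly convex objective, with modulus $1-\rho = 1/L_\psi>0$. The equivalence theorem recalled from \cite[Proposition 2]{gribonval2020characterization} and \cite[Theorem 3.5]{luo2024various}, applied with $\lambda=1<\lambda_\phi$, then gives that $\prox_\phi$ is $L_\psi$-Lipschitz. This is the same fact already used in Proposition \ref{prop:sigma_b}.

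The second step applies this Lipschitz bound at the perturbed points:
\begin{equation*}
\Vert \g_b(x)-\g_b(y)\Vert \leq L_\psi \Vert (x-y) + (r(x)-r(y))\Vert .
\end{equation*}
The triangle inequality then gives $\Vert r(x)-r(y)\Vert\leq \Vert r(x)\Vert+\Vert r(y)\Vert \leq 2\epsilon$, which yields the claimed bound $L_\psi(\Vert x-y\Vert+2\epsilon)$.

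There is no real obstacle here. The only point needing care is the first step: one must check that the hypotheses place us in the regime where $\prox_\phi$ is single-valued and globally Lipschitz, namely $\rho<1$ and a prox-bound larger than the parameter $1$. The rest is the routine triangle inequality.
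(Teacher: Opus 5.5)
Your proof is correct and follows the paper's intended argument. The paper gives no written proof, describing the result as straightforward in the same way as Proposition~\ref{prop:lipschitz_a}. The intended reasoning is exactly yours: apply the $L_\psi$-Lipschitz continuity of $\prox_\phi$ (the same fact invoked for Proposition~\ref{prop:sigma_b}) at the perturbed points $x+r(x)$ and $y+r(y)$, then bound $\Vert r(x)-r(y)\Vert\leq 2\epsilon$.
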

\paragraph{Admissibility of $\g_b$.} The existence of fixed points is not guaranteed unless we assume further structure on the residual error $r$. Again we deem reasonable to assume that they do, and in this context we can characterize them w.r.t. to those of $\prox_\phi$. 
\begin{proposition} \label{prop:typec_subdiff_dist}
    Let $\phi:\RR^N \mapsto \overline{\RR}$ be a proper, l.s.c., convex function. Suppose that there exists fixed points of $\g_b$. Then for all $x\in \Fix \g_b$, we have:
    \begin{equation}
        \mathrm{dist}\left(0,\partial \phi(x) \right) \leq \epsilon
    \end{equation}
\end{proposition}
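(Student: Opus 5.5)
The plan is to use the subdifferential characterization of type (b) approximations and then evaluate it at a fixed point. Since $\phi$ is proper, l.s.c. and convex, $\prox_\phi$ is single-valued and the first-order condition \eqref{eq:first_order_prox} is an equivalence. Applying it at the point $y + r(y)$ gives, for every $y\in\RR^N$,
\begin{equation*}
    \left(y + r(y)\right) - \g_b(y) \in \partial \phi\left(\g_b(y)\right),
\end{equation*}
which is exactly the inclusion recorded right after Definition \ref{def:approx_typeb}.

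Next, take $x \in \Fix \g_b$, so that $\g_b(x) = x$. Substituting into the inclusion makes the $x$ terms cancel and leaves
\begin{equation*}
    r(x) \in \partial\phi(x).
\end{equation*}
In particular $x\in\dom\partial\phi$, so $\partial\phi(x)$ is nonempty; it is also closed and convex since $\phi$ is convex. Therefore the distance from $0$ to $\partial\phi(x)$ is bounded by the norm of any element of that set, and taking the element $r(x)$ gives $\mathrm{dist}(0,\partial\phi(x)) \leq \Vert r(x)\Vert \leq \epsilon$ by Definition \ref{def:approx_typeb}.

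No step is a genuine obstacle. The only point needing care is the first one: the first-order condition must hold as an equivalence, not just as the implication stated for general $\phi$, so that $\g_b(x)=\prox_\phi(x+r(x))$ really yields membership in $\partial\phi$. This is where convexity is used, through Fermat's rule for the strongly convex function $\Phi$ and the sum rule $\partial\Phi = \partial\phi + (\cdot - y)$. In the weakly convex case the same argument would go through with the limiting subdifferential, since the implication direction is all that is needed.
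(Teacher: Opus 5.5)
Your proof is correct and follows essentially the same argument as the paper: substitute $\g_b(x)=x$ into the inclusion $(x+r(x))-\g_b(x)\in\partial\phi(\g_b(x))$ to get $r(x)\in\partial\phi(x)$, then bound the distance by $\Vert r(x)\Vert\le\epsilon$. One small correction: your claim that the first-order condition must hold as an equivalence is not right. Only the forward implication $z=\prox_\phi(y)\Rightarrow y-z\in\partial\phi(z)$ is used, as your own closing sentence says, and here convexity mainly guarantees that $\prox_\phi$ is single-valued, so that $\g_b$ is well defined.
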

\begin{proof}
    We have by definition of $\g_b$ that
    \begin{equation*}
        x-\g_b(x) \in \partial \phi(\g_b(x)) - r(x),
    \end{equation*}
    hence
    \begin{equation*}
        r(x) \in \partial \phi(x).
    \end{equation*}
    Also by definition $\Vert r(x) \Vert \leq \epsilon$, which concludes the proof.
\end{proof}
We cannot hope to show that $\mathrm{dist}(\Fix \g_b, \argmin \phi)\leq \epsilon$ without further assumptions. Indeed take for instance $\phi: x \mapsto |x|$. The subdifferential of $\phi$ is:
\begin{equation}
    \partial \phi(x) = \begin{cases}
        \{-1\} & \text{if } x<0, \\
        [-1,1] & \text{if } x = 0, \\
        \{1\} & \text{if } x>0.
    \end{cases}
\end{equation}
Hence, for all $\epsilon>1$, all $x\in\Hi$ are such that $\mathrm{dist}\left(0, \partial \phi(x) \right) \leq \epsilon$. This behavior is a direct consequence of the lack of inner continuity of the subdifferential, i.e., subgradients at critical point cannot be approached by limits of subgradients at nearby points. We have however a similar result as for approximations of type (a).

\begin{proposition}Suppose that $\phi$ is proper, l.s.c., convex and coercive. Suppose also that there exists $\overline{\epsilon}>0$ such that for all $\epsilon \leq \overline{\epsilon}$, $\g_b^\epsilon$ admits fixed points, and that for every sequence $\epsilon_n \downarrow 0$, and every choice $y_n \in \Fix \g^\epsilon_b$, $(y_n)$ is bounded. Then,
    \begin{equation}
        \limsup_{\epsilon \rightarrow 0^+} \Fix \g^\epsilon_b \subseteq \Fix \prox_\phi.
    \end{equation}
\end{proposition}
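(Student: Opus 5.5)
The argument mirrors the one given for type (a) approximations; the only change is the bound on $y_n - \prox_\phi(y_n)$. Fix a sequence $\epsilon_n \downarrow 0^+$ and points $y_n \in \Fix \g^{\epsilon_n}_b$, which exist for $n$ large since $\epsilon_n \leq \overline{\epsilon}$ eventually. By the boundedness assumption, Bolzano--Weierstrass in $\Hi$ gives a subsequence $y_{n_k} \to y \in \Hi$. Showing that every such cluster point $y$ lies in $\Fix \prox_\phi$ is exactly the inclusion $\limsup_{\epsilon\to 0^+}\Fix \g^\epsilon_b \subseteq \Fix \prox_\phi$, understood in the Painlevé--Kuratowski sense.

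To bound $\Vert y - \prox_\phi(y)\Vert$, I split it as
\begin{equation*}
\Vert y - \prox_\phi(y) \Vert \leq \Vert y - y_{n_k}\Vert + \Vert y_{n_k} - \prox_\phi(y_{n_k})\Vert + \Vert \prox_\phi(y_{n_k}) - \prox_\phi(y)\Vert .
\end{equation*}
Since $\phi$ is convex, $\prox_\phi$ is nonexpansive, so the third term is at most $\Vert y - y_{n_k}\Vert$. For the middle term, use the fixed-point property $y_{n_k} = \g^{\epsilon_{n_k}}_b(y_{n_k}) = \prox_\phi(y_{n_k} + r(y_{n_k}))$ with $\Vert r(y_{n_k})\Vert \leq \epsilon_{n_k}$. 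Nonexpansiveness again gives
\begin{equation*}
\Vert y_{n_k} - \prox_\phi(y_{n_k})\Vert = \Vert \prox_\phi(y_{n_k}+r(y_{n_k})) - \prox_\phi(y_{n_k})\Vert \leq \epsilon_{n_k}.
\end{equation*}
This is the same estimate as in Proposition \ref{prop:sigma_b}, with $L_\psi = 1$ in the convex case. Altogether $\Vert y - \prox_\phi(y)\Vert \leq 2\Vert y - y_{n_k}\Vert + \epsilon_{n_k} \to 0$, hence $y \in \Fix \prox_\phi$.

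There is no real obstacle here. The one point to get right is that the perturbation $r$ sits inside the argument of $\prox_\phi$, so the middle term has to go through the Lipschitz property of $\prox_\phi$ rather than be read off directly as in type (a). Coercivity of $\phi$ is not actually needed beyond guaranteeing $\Fix \prox_\phi = \argmin \phi \neq \emptyset$. Alternatively, one could use Proposition \ref{prop:typec_subdiff_dist}, which gives $r(y_{n_k}) \in \partial\phi(y_{n_k})$ with $r(y_{n_k}) \to 0$, together with closedness of the graph of $\partial \phi$, to conclude $0 \in \partial\phi(y)$.
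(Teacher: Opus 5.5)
Your proof is correct and follows essentially the same argument as the paper: extract a convergent subsequence, then use nonexpansiveness of $\prox_\phi$ to reach $\Vert y - \prox_\phi(y)\Vert \leq 2\Vert y - y_{n_k}\Vert + \epsilon_{n_k}$. The paper inserts $\prox_\phi(y_{n_k}+r(y_{n_k}))$ rather than $\prox_\phi(y_{n_k})$. Its middle term therefore vanishes by the fixed-point property, and the $\epsilon_{n_k}$ appears in the last term instead; the resulting bound is the same.
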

\begin{proof}
    Set $\epsilon_n \downarrow 0^+$, and $y_n \in \Fix \g^{\epsilon_n}_b$. By assumption, there exists a subsequence $y_{n_k} \rightarrow_{n_k\rightarrow+\infty} y \in \Hi$.
    We have, using Lipschitz continuity of $\prox_\phi$ and the definition of $\g_b$, that
    \begin{align*}
        \Vert y - \prox_\phi(y) \Vert & = \Vert y -y_{n_k} + y_{n_k} - \prox_\phi(y_{n_k}+r_{\epsilon_{n_k}}(y_{n_k})) + \prox_\phi(y_{n_k}+r_{\epsilon_{n_k}}(y_{n_k})) - \prox_\phi(y) \Vert \\
        & \leq 2 \Vert y - y_{n_k} \Vert + \Vert y_{n_k}- \g_b^{\epsilon_{n_k}}(y_{n_k})\Vert + \epsilon_{n_k} \\
        & \rightarrow 0 \text{ as } n_k \rightarrow + \infty.
    \end{align*}
    Hence,
    \begin{equation*}
        \limsup_{\epsilon \rightarrow 0^+} \Fix \g^\epsilon_b \subseteq \Fix \prox_\phi.
    \end{equation*}
\end{proof}

\paragraph{An equivalence with approximations of type (a).} For the class of weakly convex function, the proximal operator is Lipschitz continuous and single-valued. For this class of function, some approximations of type (b) are in fact approximations of type (a)
\begin{lemma}
    Suppose that Assumption \ref{ass:weaklyconvex} holds. Then suppose that we have for all $x \in K$ that $\g_b(x):=\prox_\phi(x+ r(x))$, with $\Vert r(x) \Vert_2 \leq \epsilon$, i.e., $\g_b$ is an $\epsilon$-type (b) approximation of $\prox_\phi$. Then, for all $x \in K$
    \begin{equation}
        \Vert \g_b(x) - \prox_{\phi}(x) \Vert_2 \leq L_\psi \epsilon,
    \end{equation}
    and in particular, for all  $y \in \Fix \g_b$
    \begin{equation}
        \Vert y - \prox_{\phi}(y) \Vert_2 \leq L_\psi \epsilon.
    \end{equation}
\end{lemma}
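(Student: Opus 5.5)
The plan is to use only the Lipschitz continuity of $\prox_\phi$ granted by Assumption \ref{ass:weaklyconvex}. By the characterization recalled above (\cite[Proposition 2]{gribonval2020characterization}), a $\rho$-weakly convex $\phi$ with $\rho = 1-1/L_\psi<1$ and prox-bound $\lambda_\phi>1$ has a single-valued proximal operator $\prox_\phi$, and this operator is $L_\psi$-Lipschitz on the whole of $\Hi$. The argument therefore needs no convexity and reduces to two short estimates.

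For the first inequality, fix $x\in K$. Since $\g_b(x)=\prox_\phi(x+r(x))$, the Lipschitz bound gives
\begin{equation*}
\Vert \g_b(x)-\prox_\phi(x)\Vert_2 = \Vert \prox_\phi(x+r(x))-\prox_\phi(x)\Vert_2 \leq L_\psi \Vert r(x)\Vert_2 \leq L_\psi\epsilon .
\end{equation*}
This is exactly the argument of Proposition \ref{prop:sigma_b}, now restricted to $K$. It also shows that, on $K$, $\g_b$ is an $L_\psi\epsilon$-type (a) approximation with error $e(x):=\g_b(x)-\prox_\phi(x)$, which is the claimed equivalence.

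For the second inequality, take $y\in\Fix \g_b$, so that $y=\g_b(y)$. I read the statement as implicitly requiring $y\in K$, since $\g_b$ is only specified on $K$; I will state this explicitly. Substituting $y$ for $\g_b(y)$ in the first bound gives $\Vert y-\prox_\phi(y)\Vert_2 = \Vert \g_b(y)-\prox_\phi(y)\Vert_2 \leq L_\psi\epsilon$.

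The only step that needs care is justifying that $\prox_\phi$ is single-valued and globally $L_\psi$-Lipschitz under Assumption \ref{ass:weaklyconvex}. This is why the assumption insists on $\lambda_\phi>1$, so that the parameter $\lambda=1$ lies in $(0,\lambda_\phi)$ as the cited theorem requires. The identification of the Lipschitz constant uses $L=1/(1-\rho)=L_\psi$. Once these points are checked, everything else is a direct substitution.
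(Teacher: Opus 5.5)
Your proposal is correct and follows the paper's proof: both bound $\Vert \prox_\phi(x+r(x)) - \prox_\phi(x)\Vert_2 \leq L_\psi \Vert r(x)\Vert_2 \leq L_\psi\epsilon$ using the $L_\psi$-Lipschitz continuity of $\prox_\phi$, and the fixed-point bound then follows by substituting $y=\g_b(y)$. Your remark that this second step requires $y\in K$ is a fair clarification of something the paper leaves implicit.
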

\begin{proof}
    We have $\Vert \g_b(x) - \prox_\phi(x) \Vert_2 = \Vert \prox_\phi(x + r(x)) - \prox_\phi(x) \Vert_2 \leq L_\psi \Vert r(x)\Vert_2$. Hence, $\Vert \g_b(x) - \prox_\phi(x) \Vert_2 \leq L_\psi \epsilon$. 
\end{proof}
\subsection{Approximations of type (c)}
The approximation of type (c) is constructed by replacing the proximal inclusion in the subdifferential, by the approximation inclusion in the $\epsilon,\rho$-subdifferential, $\epsilon$ measuring the enlargement of the subdifferential.
\begin{definition}{\textbf{Type (c) approximation.}} \label{def:approx_typec}
    Let $\phi:\Hi \mapsto \overline{\RR}$ be a proper, l.s.c., $\rho$-weakly convex function. We say that $\g_c$ is an $\epsilon$-type (c) approximation of $\prox_\phi$ if for all $y \in \Hi$
    \begin{equation}
        y -\g_c(y) \in \partial_\epsilon^\rho \phi(\g_c(y)).
    \end{equation}
\end{definition}
Invoking the everywhere prox-regularity of weakly convex functions, we can quantify the quality of this approximation.
\begin{proposition} \label{prop:sigma_c}
    Suppose that Assumption \ref{ass:weaklyconvex} holds. Let $\g_c$ be an $\epsilon$-type (c) approximation of $\prox_\phi$. The following holds
    \begin{equation}
        (\forall x \in \Hi), \quad \Vert \g_c(x) - \prox_\phi(x) \Vert_2 \leq \sqrt{\frac{\epsilon}{1-\rho}}.
    \end{equation}
\end{proposition}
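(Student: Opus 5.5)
The plan is to compare the two points $z := \g_c(y)$ and $p := \prox_\phi(y)$ through the proximal objective $\Phi(x) = \phi(x) + \frac{1}{2}\Vert x-y\Vert^2$. I will derive two opposite inequalities on $\Phi(z)-\Phi(p)$ and combine them. Set $d := \Vert z-p\Vert$.

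First, the exact side. Under Assumption \ref{ass:weaklyconvex}, $\phi$ is $\rho$-weakly convex with $\rho<1$. Hence $\Phi = \left(\phi + \frac{\rho}{2}\Vert\cdot\Vert^2\right) + \left(\frac{1}{2}\Vert \cdot - y\Vert^2 - \frac{\rho}{2}\Vert\cdot\Vert^2\right)$ is $(1-\rho)$-strongly convex. Its unique minimizer is $p$, so $0 \in \partial\Phi(p)$, and the strong convexity inequality at the minimizer gives the quadratic growth
\begin{equation*}
\Phi(z) \geq \Phi(p) + \frac{1-\rho}{2} d^2 .
\end{equation*}
Here $z\in\dom \phi$, since the weak $\epsilon$-subdifferential in Definition \ref{def:epsilon_subdiff_weaklyconvex} is only defined at points of $\dom\phi$.

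Second, the inexact side. I will use Definition \ref{def:approx_typec} with $v = y - z$ and test point $x = p$:
\begin{equation*}
\langle y-z, p-z\rangle \leq \phi(p) - \phi(z) + \frac{\rho}{2} d^2 + \epsilon .
\end{equation*}
Then I expand the quadratic part of $\Phi(p)$ around $z$:
\begin{equation*}
\frac{1}{2}\Vert p-y\Vert^2 = \frac{1}{2}\Vert z-y\Vert^2 + \langle z-y, p-z\rangle + \frac{1}{2} d^2 .
\end{equation*}
Adding $\phi(p)$ to both sides and inserting the previous inequality eliminates the cross term. This yields
\begin{equation*}
\Phi(p) \geq \Phi(z) + \frac{1-\rho}{2} d^2 - \epsilon .
\end{equation*}

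Chaining the two bounds gives $\Phi(z) \geq \Phi(z) + (1-\rho) d^2 - \epsilon$, that is, $d^2 \leq \epsilon/(1-\rho)$. This is the claim, since $y$ was arbitrary. Note that $1-\rho = 1/L_\psi$, which recovers the $\sqrt{L_\psi \epsilon}$ entry of Table \ref{tab:summary_approx}.

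The only delicate step is the bookkeeping in the expansion, namely checking that the $\frac{\rho}{2}d^2$ slack from the weak subdifferential combines with the $\frac12 d^2$ from the expansion to give exactly $\frac{1-\rho}{2}d^2$. I should also confirm that the quadratic growth inequality holds for the nonsmooth, extended-valued $\Phi$. It does: $\Phi - \frac{1-\rho}{2}\Vert\cdot - p\Vert^2$ is convex, its subdifferential at $p$ contains $0$, and so $p$ minimizes it.
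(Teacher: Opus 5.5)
Your proof is correct and is essentially the paper's argument. The paper adds two inequalities: the exact weak-subgradient inequality at $\prox_\phi(x)$, obtained from uniform prox-regularity and tested at $\g_c(x)$, and the $\epsilon$-inequality defining $\g_c(x)$, tested at $\prox_\phi(x)$. Your two bounds on $\Phi(z)-\Phi(p)$ are exactly these inequalities with the expansion of $\tfrac12\Vert\cdot-y\Vert^2$ added to each, and that term cancels when you chain them. The only difference is that you get the exact side from the $(1-\rho)$-strong convexity of $\Phi$ (quadratic growth at its minimizer) rather than citing prox-regularity, which makes your version slightly more self-contained.
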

\begin{proof}
    Weakly convex functions are prox-regular everywhere with the prox-regular constant uniformly equal to $\rho$ \cite{atenas2023unified}. Hence, we have 
    \begin{align*}
        x -\prox_\phi(x) \in \partial \phi(\prox_\phi(x)) \implies (\forall y \in \Hi), \quad &\phi(\prox_\phi(x)) + \langle x-\prox_\phi(x), y - \prox_\phi(x)\rangle \\ &\leq \phi(y) + \frac{\rho}{2}\Vert y - \prox_\phi(x) \Vert_2^2.
    \end{align*}
    Moreover, by definition of $\g_c$, we also have 
    \begin{equation*}
        (\forall y \in \Hi), \quad \phi(\g_c(x)) + \langle x-\g_c(x), y - \g_c(x)\rangle \leq \phi(y) + \frac{\rho}{2}\Vert y - \g_c(x) \Vert_2^2+\epsilon,
    \end{equation*}
    implying that 
    \begin{equation*}
        \Vert \prox_\phi(x) - \g_c(x) \Vert^2_2 \leq \rho \Vert   \prox_\phi(x) - \g_c(x) \Vert^2_2 + \epsilon,
    \end{equation*}
    which concludes the proof.
\end{proof}
\paragraph{Lipschitz continuity of the approximation.}
\begin{proposition} \label{prop:lipschitz_c}
    Suppose that Assumption \ref{ass:weaklyconvex} holds. Let $\epsilon>0$ and $\g_c$ be an $\epsilon$-type (c) approximation of $\prox_\phi$. Then, the following hold
    \begin{equation}
        (\forall x,y \in \Hi), \quad \Vert \g_c(x) - \g_c(y) \Vert \leq \frac{1}{1-\rho}\Vert x-y \Vert + \sqrt{\frac{2\epsilon}{1-\rho}}.
    \end{equation}
\end{proposition}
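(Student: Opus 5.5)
Two routes suggest themselves. The first is the triangle inequality through $\prox_\phi$, combined with Proposition \ref{prop:sigma_c} and the $L_\psi = 1/(1-\rho)$ Lipschitz continuity of $\prox_\phi$. That gives
\[
\Vert \g_c(x)-\g_c(y)\Vert \leq \Vert \g_c(x)-\prox_\phi(x)\Vert + \Vert \prox_\phi(x)-\prox_\phi(y)\Vert + \Vert \prox_\phi(y)-\g_c(y)\Vert \leq \tfrac{1}{1-\rho}\Vert x-y\Vert + 2\sqrt{\tfrac{\epsilon}{1-\rho}}.
\]
The constant here is $2\sqrt{\epsilon/(1-\rho)} = \sqrt{4\epsilon/(1-\rho)}$, slightly worse than the claimed $\sqrt{2\epsilon/(1-\rho)}$. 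So I would use it only as a fallback and aim for a direct argument that mirrors the proof of Proposition \ref{prop:sigma_c}.

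For the direct argument, set $u=\g_c(x)$ and $v=\g_c(y)$.
\begin{enumerate}
\item Write the defining inequality of $x-u\in\partial^\rho_\epsilon\phi(u)$ at the test point $v$:
\[
\langle x-u, v-u\rangle \leq \phi(v)-\phi(u)+\tfrac{\rho}{2}\Vert v-u\Vert^2+\epsilon .
\]
\item Write the symmetric inequality for $y-v\in\partial^\rho_\epsilon\phi(v)$ tested at $u$.
\item Sum the two. The $\phi$ values cancel and we obtain
\[
\langle (x-y)-(u-v), u-v\rangle \geq -\rho\Vert u-v\Vert^2 - 2\epsilon,
\]
that is,
\[
(1-\rho)\Vert u-v\Vert^2 \leq \langle x-y,u-v\rangle + 2\epsilon \leq \Vert x-y\Vert\,\Vert u-v\Vert + 2\epsilon .
\]
\end{enumerate}
This is an approximate cocoercivity / strong monotonicity inequality, with the $2\epsilon$ playing the role of slack.

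It remains to solve the quadratic inequality $(1-\rho)t^2 - a t - 2\epsilon \leq 0$, where $t=\Vert u-v\Vert$ and $a=\Vert x-y\Vert$. Its positive root gives
\[
t \leq \frac{a+\sqrt{a^2+8\epsilon(1-\rho)}}{2(1-\rho)} \leq \frac{a}{1-\rho}+\sqrt{\frac{2\epsilon}{1-\rho}},
\]
using $\sqrt{a^2+b}\leq a+\sqrt b$. The term $\frac{\sqrt{8\epsilon(1-\rho)}}{2(1-\rho)}$ equals exactly $\sqrt{2\epsilon/(1-\rho)}$, so this matches the claimed constant.

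The main obstacle I anticipate is bookkeeping. I need to make sure the definition of $\partial^\rho_\epsilon$ is applied with the correct orientation, and I need to check that $\phi(u)$ and $\phi(v)$ are finite, so that they cancel. Finiteness follows because the inclusion forces $u,v\in\dom\phi$. Finally, the constant $\frac{1}{1-\rho}=L_\psi$ requires $\rho<1$, which is guaranteed by Assumption \ref{ass:weaklyconvex}.
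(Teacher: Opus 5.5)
Your proof is correct and matches the paper's: sum the two weak $\epsilon$-subdifferential inequalities to get $(1-\rho)\Vert \g_c(x)-\g_c(y)\Vert^2 \leq \langle x-y, \g_c(x)-\g_c(y)\rangle + 2\epsilon$, then bound the positive root of the quadratic using $\sqrt{a^2+b}\leq a+\sqrt{b}$. Your constant $\sqrt{8\epsilon(1-\rho)}/(2(1-\rho)) = \sqrt{2\epsilon/(1-\rho)}$ is also right; the paper's intermediate display drops a factor $(1-\rho)$ in one denominator, but its final bound agrees with yours.
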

\begin{proof}
    By definition of the $\rho,\epsilon$-subdifferential, we have that for all $x,y \in \Hi$
    \begin{equation*}
        \langle \g_c(x) - \g_c(y), \g_c(x)-\g_c(y) + (y-x)  \rangle \leq \rho \Vert \g_c(x) - \g_c(y) \Vert^2 + 2\epsilon.
    \end{equation*}
    Hence,
    \begin{align*}
        (1-\rho)\Vert \g_c(x) - \g_c(y) \Vert^2 & \leq 2 \epsilon + \langle \g_c(x) -\g_c(y), x-y \rangle \\
        & \leq 2\epsilon + \Vert \g_c(x) - \g_c(y) \Vert \Vert x-y \Vert.
    \end{align*}
    This is a quadratic inequality in $\Vert \g_c(x) -\g_c(y) \Vert$ that holds if
    \begin{equation*}
        \frac{\Vert x-y \Vert - \sqrt{\Vert x-y \Vert^2 + 8 (1-\rho)\epsilon}}{2(1-\rho)} \leq \Vert \g_c(x) - \g_c(y) \Vert \leq \frac{\Vert x-y \Vert +\sqrt{\Vert x-y \Vert^2 + 8(1-\rho) \epsilon}}{2(1-\rho)}.
    \end{equation*}
    The right hand side can be upper bounded by 
    \begin{align*}
        \frac{\Vert x-y \Vert +\sqrt{\Vert x-y \Vert^2 + 8 (1-\rho)\epsilon}}{2} & \leq \frac{\Vert x-y \Vert +\sqrt{(\Vert x-y \Vert + 2 \sqrt{2(1-\rho)\epsilon})^2}}{2(1-\rho)} \\
        & \leq \frac{1}{1-\rho}\Vert x-y \Vert + \sqrt{\frac{2\epsilon}{1-\rho}},
    \end{align*}
    which concludes the proof.
\end{proof}

\paragraph{Admissibility of $\g_c$.} If $\phi$ is convex, an immediate consequence for an approximation of type (c) of $\prox_\phi$ is that the fixed points of $\prox_\phi$ yield $0$ of $\partial_\epsilon \phi$ for any $\epsilon\geq 0$.
\begin{equation}
    x \in \mathrm{Fix }\g_c \implies 0 \in \partial_\epsilon \phi(\g_c(x)).
\end{equation}
Therefore by the properties of $\partial_\epsilon\phi$ that $\bigcap_{\epsilon\geq 0} \Fix \g_c^\epsilon =\Fix \prox_\phi$, which is a much stronger result than what we could obtain for approximation of type $(a)$ and $(b)$.
\begin{proposition} 
    Let $\phi:K \mapsto \overline{\RR}$ be a proper, l.s.c., convex function. Let $\g_c:K \mapsto K$ be an approximation of type (c) (\ref{def:approx_typed}) of $\prox_\phi$. Then fixed points of $\g_c$ are $\epsilon$-optimal solution of
    \begin{equation}
        \argmin_{x \in \Hi} \phi(x).
    \end{equation}
\end{proposition}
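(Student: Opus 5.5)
Since $\phi$ is convex, we take $\rho=0$ in Definition \ref{def:epsilon_subdiff_weaklyconvex}, so that $\partial_\epsilon^0\phi$ is exactly the convex $\epsilon$-subdifferential $\partial_\epsilon\phi$. The plan is to take a fixed point $x=\g_c(x)$ and show directly that $\phi(x)\le \inf_{\Hi}\phi+\epsilon$, which is what we mean by $\epsilon$-optimal.

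First, we unpack Definition \ref{def:approx_typec} at $y=x$. It gives
\begin{equation*}
x-\g_c(x)\in\partial_\epsilon\phi(\g_c(x)).
\end{equation*}
Substituting $\g_c(x)=x$, this becomes $0\in\partial_\epsilon\phi(x)$.

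Second, we write out the definition of the convex $\epsilon$-subdifferential with $v=0$. It reads, for every $z\in\Hi$ (or $z\in K$, the domain on which $\phi$ is considered),
\begin{equation*}
\langle 0, z-x\rangle \le \phi(z)-\phi(x)+\epsilon,
\end{equation*}
that is, $\phi(x)\le\phi(z)+\epsilon$. Note that the $\epsilon$-subdifferential as displayed in the paper has the sign convention $\phi(x)+\langle v,y-x\rangle-\epsilon\le\phi(y)$. With $v=0$ the inner product vanishes, so the sign convention is irrelevant and the conclusion is the same either way. Taking the infimum over $z$ gives $\phi(x)\le\inf\phi+\epsilon$. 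Hence $x$ is an $\epsilon$-minimizer, and in particular $\inf\phi>-\infty$ whenever $\Fix\g_c\neq\emptyset$, since $x\in\dom\phi$. We would also remark that if $\argmin\phi\neq\emptyset$, then $\phi(x)\le\min\phi+\epsilon$.

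There is no real obstacle; the only care points are bookkeeping. We must make sure the $\rho=0$ specialization of Definition \ref{def:epsilon_subdiff_weaklyconvex} coincides with the convex $\epsilon$-subdifferential, which it does term by term. We must also check that $x\in\dom\phi$: the weak subdifferential is only defined at points of $\dom\phi$, so $\g_c(x)=x$ lies in $\dom\phi$. Finally, the domain $K$ should be handled by restricting the quantifier over $z$ to $K$, or by extending $\phi$ by $+\infty$ outside $K$.
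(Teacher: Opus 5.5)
Your proof is correct and follows the paper's argument: a fixed point gives $0\in\partial_\epsilon\phi(x)$. The paper then cites \cite[Theorem XI.1.1.5]{hiriart-urruty1993convexII} and picks $y\in\argmin\phi$, whereas you unpack Definition~\ref{def:epsilon_subdiff_weaklyconvex} directly with $\rho=0$, $v=0$ and pass to $\inf\phi$, which also removes the paper's tacit assumption that $\argmin\phi\neq\emptyset$.

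One aside is wrong, though your proof does not depend on it. The paper's displayed convex $\epsilon$-subdifferential swaps the base and test points; it is not merely a sign flip in the inner product. Read literally at base point $y$ with $v=0$, it says $\phi(y)\ge\phi(x)-\epsilon$ for all $x$, which is an $\epsilon$-maximizer condition, so the convention is not ``irrelevant''. Your conclusion stands because you rest it on Definition~\ref{def:epsilon_subdiff_weaklyconvex}, which uses the standard convention.
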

\begin{proof}
    Fixed points of $\g_c$ are, by Definition \ref{def:approx_typec} such that
    \begin{equation*}
        (\forall x \in \mathrm{Fix}~\g_c), \quad x-\g_c(x) = 0 \in \partial_\epsilon \phi(x).
    \end{equation*}
    Thus by \cite[Theorem XI.1.1.5]{hiriart-urruty1993convexII}, we have
    \begin{equation*}
        (\forall y \in \Hi), \quad \phi(x) \leq \phi(y) + \epsilon,
    \end{equation*}
    and the desired result by setting $y \in \argmin \phi$.
\end{proof}

With the convex error bound, we can quantify how much the set of $\epsilon$-critical points grows around the set of critical points.
\begin{proposition} \label{prop:diam_epsilon_subdiff}
    Suppose $\phi$ is a proper, l.s.c., convex, and semi-algebraic function. Moreover $\argmin \phi$ is nonempty and compact. Without loss of generality, suppose also that $\min \phi = 0$. Then there exists $p\geq1$, rational, and $\gamma_0>0$ such that for all $\epsilon>0$
    \begin{equation}
        \diam \left(\left(\partial_\epsilon \phi \right)^{-1}(0)\right) \leq \frac{2}{\gamma_0} \epsilon^{1+\frac{1}{p}}.
    \end{equation}
\end{proposition}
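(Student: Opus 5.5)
The plan is to identify the set $(\partial_\epsilon \phi)^{-1}(0)$ with a sublevel set of $\phi$. Then the global error bound of Theorem~\ref{th:error_bound_convex} controls how far this sublevel set extends, and the diameter bound follows from the triangle inequality through a minimizer.

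\textbf{Step 1 (sublevel-set description).} From the definition of the $\epsilon$-subdifferential with $v=0$, we have $0 \in \partial_\epsilon\phi(x)$ if and only if $\phi(x) \leq \phi(y) + \epsilon$ for all $y\in\Hi$. This is the same fact used in the proof of the preceding proposition, via \cite[Theorem XI.1.1.5]{hiriart-urruty1993convexII}. Since $\min\phi = 0$, it is equivalent to $\phi(x) \le \epsilon$. Hence
\begin{equation*}
  S_\epsilon := \left(\partial_\epsilon\phi\right)^{-1}(0) = \{x\in\Hi \mid \phi(x) \leq \epsilon\},
\end{equation*}
a closed convex set that contains $\argmin\phi$.

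\textbf{Step 2 (error bound).} Because $\phi$ is proper, l.s.c., convex and semi-algebraic with $\argmin\phi$ nonempty and compact, Theorem~\ref{th:error_bound_convex} supplies a rational $p\ge 1$ and $\gamma_0>0$ such that, for every $x\in S_\epsilon$,
\begin{equation*}
  \mathrm{dist}(x,\argmin\phi) \leq \frac{1}{\gamma_0}\,\phi(x)^{1+\frac1p} \leq \frac{1}{\gamma_0}\,\epsilon^{1+\frac1p}.
\end{equation*}
The second inequality uses that $t\mapsto t^{1+1/p}$ is nondecreasing on $\RR^+$. Because $\argmin\phi$ is compact, the distance is attained at a projection point $\pi(x)\in\argmin\phi$.

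\textbf{Step 3 (diameter).} Take $x,y\in S_\epsilon$ and route through a minimizer $x^*\in\argmin\phi$:
\begin{equation*}
  \Vert x-y\Vert \leq \Vert x-x^*\Vert + \Vert x^*-y\Vert .
\end{equation*}
Taking $x^*=\pi(x)=\pi(y)$, each term is bounded by $\epsilon^{1+1/p}/\gamma_0$ by Step 2. Taking the supremum over $x,y$ then yields $\diam(S_\epsilon)\le \frac{2}{\gamma_0}\epsilon^{1+1/p}$, which is exactly the claimed bound.

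\textbf{Main obstacle.} The delicate point is Step 3. Both points must be compared with the \emph{same} minimizer, so the argument needs $\pi(x)=\pi(y)$. This is automatic when $\argmin\phi$ is a singleton $\{x^*\}$.

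If $\argmin\phi$ is not a singleton, the argument breaks down, and so does the statement as worded. Because $\argmin\phi \subseteq S_\epsilon$, the left-hand side is always at least $\diam(\argmin\phi)$. A non-degenerate, compact $\argmin\phi$ therefore violates the stated bound as $\epsilon\to0^+$. A concrete example is $\phi(x)=\max(|x|-1,0)$ on $\RR$, where $\argmin\phi=[-1,1]$ and the left-hand side is at least $2$ for every $\epsilon$. The proof above is thus complete for a unique minimizer, which is the case Step 3 requires.

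Without uniqueness, Steps 1--2 still give the one-sided excess bound $\sup_{x\in S_\epsilon}\mathrm{dist}(x,\argmin\phi)\le \epsilon^{1+1/p}/\gamma_0$. That bound measures how much $S_\epsilon$ grows around the set of minimizers, which is what the sentence preceding the proposition describes. In that case the diameter estimate holds only with an additive term: $\diam(S_\epsilon)\le \diam(\argmin\phi)+\frac{2}{\gamma_0}\epsilon^{1+1/p}$.
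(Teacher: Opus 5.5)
Your Steps 1--3 are the paper's proof. You identify $S_\epsilon=(\partial_\epsilon\phi)^{-1}(0)$ with $[\phi\le\epsilon]$, apply Theorem~\ref{th:error_bound_convex}, and conclude with a triangle inequality through a minimizer. Your diagnosis of Step 3 is correct, and the paper's proof has the same hole. It passes from a bound on $\mathrm{dist}(x,\argmin\phi)$ to a bound on the diameter through one fixed $\bar x\in\argmin\phi$, which needs a unique minimizer. Your example $\max(|x|-1,0)$ is a valid counterexample.

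The gap you did not catch is in Step 2, and it invalidates your fallback claims. Since $1+\frac{1}{p}>1$, the error bound as quoted in Theorem~\ref{th:error_bound_convex} fails for every convex $\phi$ with $\mathrm{dom}\,\phi\neq\argmin\phi$. To see this, normalize $\min\phi=0$ and pick $z\in\mathrm{dom}\,\phi\setminus\argmin\phi$. Let $x^*$ be its projection onto the closed convex set $\argmin\phi$, and set $x_t=x^*+t(z-x^*)$ for $t\in(0,1]$. Convexity gives $\phi(x_t)\le t\,\phi(z)$. The inequality $\langle z-x^*,s-x^*\rangle\le 0$ for all $s\in\argmin\phi$ survives scaling $z-x^*$ by $t$, so $x^*$ is also the projection of $x_t$, and $\mathrm{dist}(x_t,\argmin\phi)=t\Vert z-x^*\Vert$. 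Take $t=\epsilon/\phi(z)$ with $\epsilon\le\phi(z)$ and set $c=\Vert z-x^*\Vert/\phi(z)>0$. Then $S_\epsilon$ contains $x^*$ and a point at distance $c\,\epsilon$ from $\argmin\phi$. Hence $\diam S_\epsilon\ge c\,\epsilon$, which exceeds $\frac{2}{\gamma_0}\epsilon^{1+1/p}$ for small $\epsilon$.

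The simplest instance is $\phi=|\cdot|$ on $\RR$. The minimizer is unique, $S_\epsilon=[-\epsilon,\epsilon]$, and the claim would force $\gamma_0\le\epsilon^{1/p}$ for all $\epsilon>0$. So the proposition is false in the unique-minimizer case too; it holds only when $\mathrm{dom}\,\phi$ is a single point. Your excess bound $\sup_{x\in S_\epsilon}\mathrm{dist}(x,\argmin\phi)\le\epsilon^{1+1/p}/\gamma_0$ and your additive diameter bound are also false. Both already fail on your own example, where $S_\epsilon=[-1-\epsilon,1+\epsilon]$.

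The quoted error bound has its exponent on the wrong side. For a convex semi-algebraic $\phi$ with compact $\argmin\phi$, the available H\"olderian bound controls $\mathrm{dist}(x,\argmin\phi)$ by a multiple of $(\phi(x)-\min\phi)^{1/p}$ on bounded sublevel sets, with an extra linear term globally. Feeding that into your Steps 1--3 gives $\diam S_\epsilon\le\diam(\argmin\phi)+\frac{2}{\gamma_0}\epsilon^{1/p}$ for small $\epsilon$. Testing your ``unique minimizer'' conclusion on $|\cdot|$ would have caught this.
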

\begin{proof}
    By \cite[Theorem XI.1.1.5]{hiriart-urruty1993convexII},
    \begin{equation*}
        \left(\partial_\epsilon \phi \right)^{-1}(0) = \left\{ x \big| \phi(x) \leq \phi(y) + \epsilon, ~\forall y \in \Hi\right\}.
    \end{equation*}
    In particular, $\phi(x) \leq \epsilon$ when taking  $y=\bar{x} \in \argmin \phi$. Then applying Theorem \ref{th:error_bound_convex} we have
    \begin{equation*}
        \gamma_0 \mathrm{dist}\left(x,\argmin \phi\right) \leq \epsilon^{1+\frac{1}{p}}.
    \end{equation*}
    Now, as $\bar{x} \in \argmin \phi$ trivially implies $\bar{x} \in \left(\partial_\epsilon \phi \right)^{-1}(0)$, we have:
    \begin{equation*}
        \diam \left(\left(\partial_\epsilon \phi \right)^{-1}(0)\right) \leq \frac{2}{\gamma_0} \epsilon^{1+\frac{1}{p}}.
    \end{equation*}
\end{proof}
The next result characterizes how the fixed point of $\g_c$ behave (if they exist) with respect to the desired fixed points of $\prox_\phi$.

\begin{proposition} \label{prop:admissibility_c}
    Let $\phi$ be a proper, l.s.c., convex, and semi-algebraic function. Let $\g_c$ be a continuous type (c) $\epsilon$-approximation of $\prox_\phi$. Then we have:
    \begin{equation}
        \mathrm{dist} \left(\Fix \g_c, \argmin \phi \right) \leq \frac{2}{\gamma_0} \epsilon^{1+\frac{1}{p}}.
    \end{equation}
\end{proposition}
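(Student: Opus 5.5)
The plan is to prove the statement in two parts. First, show that $\Fix \g_c$ is nonempty, which is what makes the distance meaningful and is where the continuity assumption on $\g_c$ enters. Second, show that every fixed point lies in $(\partial_\epsilon\phi)^{-1}(0)$, which is a set that contains $\argmin\phi$ and is controlled by Proposition \ref{prop:diam_epsilon_subdiff}.

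\textbf{Fixed points lie in $(\partial_\epsilon\phi)^{-1}(0)$.} Since $\phi$ is convex, we may take $\rho=0$ in Definition \ref{def:approx_typec}, and then $\partial^0_\epsilon\phi=\partial_\epsilon\phi$. If $x\in\Fix\g_c$, then $0=x-\g_c(x)\in\partial_\epsilon\phi(\g_c(x))=\partial_\epsilon\phi(x)$. By \cite[Theorem XI.1.1.5]{hiriart-urruty1993convexII} this gives $\phi(x)\le\min\phi+\epsilon$, exactly as in the proposition preceding Proposition \ref{prop:diam_epsilon_subdiff}. Normalising $\min\phi=0$ and applying the error bound of Theorem \ref{th:error_bound_convex} (which needs $\argmin\phi$ nonempty and compact, so I will add this tacitly, as in Proposition \ref{prop:diam_epsilon_subdiff}), we get $\gamma_0\,\mathrm{dist}(x,\argmin\phi)\le \epsilon^{1+1/p}$. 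This is already stronger than the claimed bound by a factor $2$. Alternatively, since $\argmin\phi\subseteq(\partial_\epsilon\phi)^{-1}(0)$, the diameter bound of Proposition \ref{prop:diam_epsilon_subdiff} gives the stated constant $\frac{2}{\gamma_0}$ directly. Taking the infimum over $x\in\Fix\g_c$ yields the claim.

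\textbf{Nonemptiness of $\Fix\g_c$.} This is the main obstacle. I would use Brouwer's theorem. By Proposition \ref{prop:sigma_c} with $\rho=0$, $\|\g_c(y)-\prox_\phi(y)\|\le\sqrt\epsilon$. Pick $\bar x\in\argmin\phi$, so that $\prox_\phi(\bar x)=\bar x$. Because $\prox_\phi$ is nonexpansive,
\[
\|\g_c(y)-\bar x\|\le\|y-\bar x\|+\sqrt\epsilon ,
\]
which is not enough by itself to get self-mapping of a ball. To fix this I would exploit coercivity, which follows from the compactness of $\argmin\phi$ for a convex function. For $y$ far from $\argmin\phi$, $\prox_\phi$ strictly decreases $\phi$ and moves $y$ toward $\argmin\phi$, with a gain that is uniformly positive on large spheres. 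Concretely, $\|\prox_\phi(y)-\bar x\|^2\le\|y-\bar x\|^2-\|y-\prox_\phi(y)\|^2$ (firm nonexpansiveness), and $\|y-\prox_\phi(y)\|\ge \mathrm{dist}(0,\partial\phi(\prox_\phi y))$, which is bounded below by a positive constant outside a large ball, by coercivity and the convexity of $\phi$. Choosing $R$ large so that this gain beats $\sqrt\epsilon$ gives $\g_c(\mathbb B(\bar x,R))\subseteq\mathbb B(\bar x,R)$. Continuity of $\g_c$ then allows Brouwer to conclude that a fixed point exists.

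If making the lower bound on $\|y-\prox_\phi(y)\|$ fully rigorous becomes delicate, a fallback is to interpret the statement as concerning fixed points whenever they exist, with the convention $\mathrm{dist}(\emptyset,\cdot)=+\infty$ avoided by assumption. The first step then carries the whole proof.
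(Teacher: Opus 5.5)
Your first step is the paper's proof. A fixed point satisfies $0\in\partial_\epsilon\phi(x)$, hence $\phi(x)\le\min\phi+\epsilon$, and then Proposition \ref{prop:diam_epsilon_subdiff} is invoked. The paper never proves $\Fix\g_c\neq\emptyset$ and never uses the continuity of $\g_c$, so it only establishes your fallback reading.

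You are right that nonemptiness and compactness of $\argmin\phi$ must be added to the hypotheses. Your direct use of the error bound is also sounder than the paper's detour through the diameter. Since $\argmin\phi\subseteq(\partial_\epsilon\phi)^{-1}(0)$, that diameter is at least $\diam\argmin\phi$. So even granting the quoted error bound, Proposition \ref{prop:diam_epsilon_subdiff} can only hold when $\argmin\phi$ is a singleton.

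Your Brouwer step does not work as written. On the sphere $\Vert y-\bar x\Vert=R$, firm nonexpansiveness with $\Vert y-\prox_\phi(y)\Vert\ge\delta$ gives $\Vert\prox_\phi(y)-\bar x\Vert\le\sqrt{R^2-\delta^2}$. That is a gain $R-\sqrt{R^2-\delta^2}\le\delta^2/R$, which \emph{shrinks} as $R$ grows. Meanwhile $\delta$ stays bounded: for $\phi=\alpha|\cdot|$ one has $|y-\prox_\phi(y)|=\alpha$ for $|y|\ge\alpha$.

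More fundamentally, no argument using only $\Vert\g_c-\prox_\phi\Vert\le\sqrt\epsilon$ and continuity can succeed. For $\phi=\alpha|\cdot|$ with $\alpha<\sqrt\epsilon$, the map $y\mapsto\prox_\phi(y)+\sqrt\epsilon$ is continuous, $\sqrt\epsilon$-close to $\prox_\phi$, and has no fixed point, because $|y-\prox_\phi(y)|\le\alpha$.

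You must use the inclusion itself. With $z=\g_c(y)$, testing $y-z\in\partial_\epsilon\phi(z)$ at $\bar x\in\argmin\phi$ gives $\langle y-z,z-\bar x\rangle\ge\phi(z)-\min\phi-\epsilon$, hence
\[
\Vert y-\bar x\Vert^2\ \ge\ \Vert z-\bar x\Vert^2+2\bigl(\phi(z)-\min\phi-\epsilon\bigr).
\]
The sublevel set $[\phi\le\min\phi+\epsilon]$ is bounded, so there is $R_0$ such that $\Vert z-\bar x\Vert<\Vert y-\bar x\Vert$ whenever $\Vert z-\bar x\Vert\ge R_0$. Therefore $\g_c(\mathbb B(\bar x,R))\subseteq\mathbb B(\bar x,R)$ for every $R\ge R_0$, and Brouwer applies.

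Finally, both your argument and the paper's rely on Theorem \ref{th:error_bound_convex} as quoted, and that inequality is false. For $\phi=|\cdot|$ it would assert $|x|^{1+1/p}\ge\gamma_0|x|$, which fails near $0$. The claimed rate is itself false. Take $\phi=|\cdot|$ on $\RR$ and $\epsilon<1$, and define
\[
\g_c(y)=\begin{cases} y+1, & y\le -1+\epsilon/2,\\ \epsilon/2, & -1+\epsilon/2\le y\le 0,\\ (y+\epsilon)/2, & 0\le y\le \epsilon,\\ \epsilon, & \epsilon\le y\le 1+\epsilon,\\ y-1, & y\ge 1+\epsilon.\end{cases}
\]
This map is nonexpansive. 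It satisfies $y-\g_c(y)\in\partial_\epsilon\phi(\g_c(y))$ everywhere, which you can check with $\partial_\epsilon|\cdot|(z)=\{s\in[-1,1] \mid |z|-sz\le\epsilon\}$. Its fixed-point set is $\Fix\g_c=\{\epsilon\}$. The distance to $\argmin\phi=\{0\}$ is therefore $\epsilon$, which exceeds $\frac{2}{\gamma_0}\epsilon^{1+1/p}$ for small $\epsilon$.

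What your first step legitimately yields uses a correct H\"olderian bound $\phi(x)-\min\phi\ge c\,\mathrm{dist}(x,\argmin\phi)^r$, with $r\ge1$, on a sublevel set. For small $\epsilon$ it gives $\mathrm{dist}(x,\argmin\phi)\le(\epsilon/c)^{1/r}$ for every fixed point $x$.
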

\begin{proof}
    By definition of an approximation of type (c), we have for all $x \in \Fix \g_c$ that
    \begin{equation*}
        0 \in \partial_\epsilon \phi (x).
    \end{equation*}
    Hence, by Proposition \ref{prop:diam_epsilon_subdiff} we have
    \begin{equation*}
        \mathrm{dist} \left(x, \argmin \phi \right) \leq \frac{2}{\gamma_0} \epsilon^{1+\frac{1}{p}},
    \end{equation*}
    which concludes the proof.
\end{proof}

\subsection{Approximations of type (d)}
The approximation of type (d) is defined with respect to the convex potential $\psi$ whose gradient is the proximal operator of $\phi$. This seems natural as approximating the gradient directly would yield an approximation of type (a) and thus not a lot of control on the Lipschitz continuity of the approximation. Moreover, in the learning community, the proximal operator is usually approximated by parametrizing a potential with a neural network and then taking the gradient of this potential. 
\begin{definition}{\textbf{Type (d) approximation.}} \label{def:approx_typed} We say that $\g_d = \nabla \psi_\epsilon$ is an $\epsilon$-type (d) approximation of $\prox_\phi$ if $\psi_\epsilon$ is continuously differentiable with Lipschitz continuous gradient, and is $\epsilon$-close to $\psi$ on a convex, compact $K \subset \Hi$:
\begin{equation}
    \sup_{x\in K} |\psi_\epsilon(x)-\psi(x)| \leq \epsilon.
\end{equation} 
\end{definition}
The convexity of $\psi_\epsilon$ is not required for this approximation to hold. However, relaxing the convexity of the approximant $\psi_\epsilon$ means that $\psi_\epsilon$ cannot be the proximal operator of some functional $\phi_\epsilon$ \cite{gribonval2020characterization}.

The first question we address in the construction of the approximation is the $L_\epsilon$-Lipschitz continuity of $\nabla \psi_\epsilon$. Usually, when $\psi_\epsilon$ is learned, practitioners enforce that $L_\epsilon <1$ (or at least try to) in order to derive convergence guarantees of algorithm using the contractivity of $\nabla \psi_\epsilon$ \cite{hurault2021gradient,hurault2022proximal,terris2021learning,bredies2024learning}. In the following we show that this can yield to sub-optimal approximation. In general one cannot obtain arbitrarily good approximation of Lipschitz continuous proximal operators, if the Lipschitz constant of the approximation is too small, i.e., if $L_\epsilon < L_\psi$. Before that we take a quick detour to discuss the approximation of Lipschitz continuous mappings.

There is a great body of literature on the approximation of Lipschitz continuous functions \cite{attouch1993approximation,lasry1986remark,bauschke2017,azagra2012real,cobzacs2019lipschitz}. Under several settings it was shown that approximants (i.e, for all $x$, $\Vert \psi - \psi_\epsilon(x) \Vert \leq \epsilon(x)$) exist such that $L_\psi \leq L_\epsilon$ \cite{azagra2012real,azagra2007smooth,johanis2024note}. On the other hand, the Lipschitz continuity of the gradient of the approximants was less studied. For convex functions, it is well known that the Moreau envelope provides a universal approximation technique of $\psi$. Recall that we note $u_\lambda$ the Moreau envelope of $\psi$ of parameter $\lambda$. For all $\psi \in \Gamma_0(\Hi)$ we have by \cite[Proposition 12.33]{bauschke2017} that for all $x \in \Hi$, $u_\lambda(x) \uparrow \psi(x)$ as $\lambda \downarrow 0$. Yet, the Lipschitz constant of the gradient of $u_{\lambda}$ is $1/\lambda$ \cite[Proposition 12.30]{bauschke2017}. Hence, the higher the Lipschitz constant of $\nabla u_\lambda$, the better the approximation. In general, it has been shown \cite{czarnecki2006approximation,azagra2013global} that convex $C^1$ functions can be approximated by convex $C^\infty$ functions in the $C^1$-fine topology, i.e., both the function and the gradient pointwise errors are controlled by a continuous function $\epsilon$ of $x$. Unfortunately, these results do not allow us to control the Lipschitz constant of the gradient of the approximant.

Hence, we show that for some convex functions $\psi$ belonging to $C^{1,1}(\Hi)$, the Lipschitz smoothness of the approximant $\psi_\epsilon$ is lower bounded by $L_\psi$, in a similar manner as if we directly tried to approximate the gradient mapping. In addition, we provide a series of results highlighting that without precise knowledge of the function $\psi$ we are trying to approximate with $\psi_\epsilon$, one should be careful about imposing constraints on the Lipschitz constant of $\nabla \psi_\epsilon$.

\begin{theorem} \label{th:Lipschitzness_lower_bound}
    Let $\psi:\RR^N \to \overline{\RR}$, be a $C^{1,1}$ convex function, whose gradient is locally $L_\psi$-Lipschitz continuous on $K$ a compact convex set of $\RR^N$ and define the measure space $(K,\mathcal{B}(K),\left.\lambda^N\right|_{K})$ where $\lambda^N$ is the $N-$dimensional Lebesgue measure and $\mathcal{B}(K)$ the Borel $\sigma-$algebra. Let $\psi_\epsilon:\Hi \to \RR$ be a locally $L_\epsilon$-smooth function on $K$ and $\epsilon>0$, such that on $K$ we have
    \begin{equation}
        \sup_{x\in K} |\psi_\epsilon(x) -\psi(x) | \leq \epsilon.
    \end{equation}
    Then for every $\eta>0$, there exists $t_0>0$ such that 
    \begin{equation}
        L_\psi - \eta < L_\epsilon + \frac{4 \epsilon}{t_0^2}.
    \end{equation}
    $t_0$ is bounded away from $0$ if the curvature is maximal away from the boundary of $K$, i.e., there exists $\delta>0$ such that for every $\eta>0$ 
    \begin{equation}
        E_\eta = \left\{ x \in \mathrm{int}~K \Big| \nabla^2 \psi(x) \text{ exists and } \Vert \nabla^2 \psi(x) \Vert_{op} >L_\psi -\eta\right\} \bigcap \left\{ x \in K \Big| \mathrm{dist}(x,\partial K)>\delta\right\}\neq \emptyset,
    \end{equation}
    where $\partial K$ denotes the boundary of $K$.
\end{theorem}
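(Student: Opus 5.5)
The plan is a second-order finite-difference argument. Since $\psi$ is convex with $L_\psi$-Lipschitz gradient on $K$, Rademacher's theorem applied to $\nabla\psi$ gives that $\nabla^2\psi(x)$ exists for $\lambda^N$-almost every $x\in K$, with $\Vert\nabla^2\psi(x)\Vert_{op}\le L_\psi$. Moreover, the essential supremum of $\Vert\nabla^2\psi\Vert_{op}$ over $\mathrm{int}\,K$ equals the local Lipschitz constant of $\nabla\psi$ on $K$. Indeed, $\nabla\psi(x)-\nabla\psi(y)=\int_0^1\nabla^2\psi(y+s(x-y))(x-y)\,ds$ along almost every segment, by a Fubini argument on the measure space $(K,\mathcal B(K),\lambda^N|_K)$. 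Since $L_\psi$ is taken to be that constant, the following holds for every $\eta>0$. There is a point $x_0\in\mathrm{int}\,K$ where $\nabla^2\psi(x_0)$ exists and $\Vert\nabla^2\psi(x_0)\Vert_{op}>L_\psi-\eta/2$. Because $\nabla^2\psi(x_0)\succeq 0$, there is a unit vector $d$ with $\langle\nabla^2\psi(x_0)d,d\rangle>L_\psi-\eta/2$.

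Next I compare second differences of $\psi$ and $\psi_\epsilon$ along $d$. Set $\Delta_t f(x_0):=f(x_0+td)+f(x_0-td)-2f(x_0)$.

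For the approximant, $\psi_\epsilon$ is $L_\epsilon$-smooth on $K$. Taylor's bound with the descent lemma on both sides gives $\Delta_t\psi_\epsilon(x_0)\le L_\epsilon t^2$, provided $x_0\pm td\in K$.

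For $\psi$, twice differentiability at $x_0$ gives $\Delta_t\psi(x_0)=t^2\langle\nabla^2\psi(x_0)d,d\rangle+o(t^2)$. So there is $t_0>0$, small enough that $x_0\pm t_0d\in K$ and the $o(t_0^2)$ remainder is below $\frac{\eta}{2}t_0^2$, such that $\Delta_{t_0}\psi(x_0)>(L_\psi-\eta)t_0^2$.

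The uniform closeness $\sup_K|\psi-\psi_\epsilon|\le\epsilon$ gives $|\Delta_{t_0}\psi(x_0)-\Delta_{t_0}\psi_\epsilon(x_0)|\le 4\epsilon$. Chaining the three estimates,
\begin{equation*}
(L_\psi-\eta)t_0^2<\Delta_{t_0}\psi(x_0)\le\Delta_{t_0}\psi_\epsilon(x_0)+4\epsilon\le L_\epsilon t_0^2+4\epsilon .
\end{equation*}
Dividing by $t_0^2$ yields $L_\psi-\eta<L_\epsilon+4\epsilon/t_0^2$, which is the claim.

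For the second part, the hypothesis $E_\eta\neq\emptyset$ lets me pick $x_0\in E_\eta$, so $\mathrm{dist}(x_0,\partial K)>\delta$. The constraint $x_0\pm td\in K$ then allows any $t\le\delta$. The remaining restriction on $t_0$ comes from the Taylor remainder. To get a lower bound on $t_0$ independent of the pointwise $o(t^2)$ rate, I replace the pointwise expansion by the integral identity
\begin{equation*}
\Delta_t\psi(x_0)=\int_0^t\langle\nabla\psi(x_0+sd)-\nabla\psi(x_0-sd),d\rangle\,ds .
\end{equation*}
I then apply it on the set of directions and points where the directional curvature exceeds $L_\psi-\eta$ on a segment of length comparable to $\delta$. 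A density-point argument gives positive measure for that set, so the segment length can be bounded below in terms of $\delta$.

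The main obstacle is exactly this uniformity: making sure $t_0$ does not collapse to $0$ as the approximation tightens. The first part is robust. The clean claim ``$t_0$ bounded away from $0$'' needs the curvature near $x_0$ to stay close to $L_\psi$ on a whole segment, not only at the single point $x_0$. I would first try to secure this via Lebesgue points of $\nabla^2\psi$ in $E_\eta$, interpreting the bound as $t_0\gtrsim\delta$. If that fails, I would state $t_0$ as the largest admissible radius, which is at least a positive quantity depending on $\delta$ and $\eta$ only.
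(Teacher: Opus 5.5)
Your first part is correct and follows the paper's argument. You take second differences of $\psi$ at a point of twice-differentiability, in a top eigendirection, and compare them with the descent-lemma bound $\Delta_t\psi_\epsilon(x_0)\le L_\epsilon t^2$ plus the $4\epsilon$ slack. Your version is in fact cleaner than the paper's in three ways:
\begin{itemize}
\item using a single $x_0$ avoids the paper's $t_0=\inf_{k\ge k_0}t_k$, which need not be positive;
\item your Fubini remark repairs the paper's integration of $\nabla^2\psi$ along an arbitrary segment, which for $N\ge2$ is a null set on which the Hessian need not exist;
\item your $t_0$ depends only on $\psi$, $K$ and $\eta$. This is the only reading of the first claim with content, since a $t_0$ allowed to depend on $\psi_\epsilon$ and $\epsilon$ could simply be taken small.
\end{itemize}

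The gap is the second claim, and it is the one you flag yourself. The hypothesis $E_\eta\cap\{\mathrm{dist}(\cdot,\partial K)>\delta\}\neq\emptyset$ is pointwise. It lets you satisfy $x_0\pm td\in K$ for $t\le\delta$, but says nothing about the scale on which the curvature stays near $L_\psi$. Lebesgue points or density points of $E_\eta$ only control averages over balls whose radius tends to $0$, at a rate depending on the point and on $\eta$. Moreover, the set of segments of length comparable to $\delta$ along which the directional curvature exceeds $L_\psi-\eta$ may be empty, so your density argument has nothing to act on. The paper does not get past this either. It simply asserts $\inf_{k\ge k_0}t_k\ge\delta$, which handles the containment constraint and ignores the requirement that the $o(1)$ remainder be below $\eta$.

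Worse, the claim is false if read as a lower bound on $t_0$ uniform in $\eta$ and in the approximant, or as one depending only on $\delta$ and $\eta$ (your fallback).

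\emph{First example.} Take $N=1$, $K=[-1,1]$, $0<r<1/2$, and $\psi(x)=\frac{L}{2}x^2$ for $|x|\le r$, $\psi(x)=Lr|x|-\frac{Lr^2}{2}$ otherwise. Then $\psi'$ is exactly $L$-Lipschitz, and $(-r,r)\subset E_\eta\cap\{\mathrm{dist}(\cdot,\partial K)>1/2\}$ for every $\eta$. Yet the constant $\psi_\epsilon\equiv\psi(1)/2$ has $\epsilon\le Lr/2$ and is $L_\epsilon$-smooth for every $L_\epsilon\ge 0$. So for $L_\epsilon<L-\eta$ the inequality forces $t_0^2<2Lr/(L-\eta-L_\epsilon)$, which tends to $0$ with $r$.

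\emph{Second example (a single fixed $\psi$).} Take $\psi''=L(1-|x|)_+$ on $K=[-2,2]$, a continuous Hessian that is maximal at the interior point $0$. Define approximants by $\psi_\epsilon''=\min(\psi'',L-\eta')$, $\psi_\epsilon'(0)=\psi'(0)$, plus a centering constant. They give $\epsilon=\eta'^2/(2L)$ and $L-L_\epsilon=\sqrt{2L\epsilon}$. A bound $t_0\ge c>0$ uniform in $\eta$ and in the approximant would give $L\le L_\epsilon+4\epsilon/c^2$, i.e.\ $\sqrt{2L\epsilon}\le 4\epsilon/c^2$, which is false for small $\epsilon$.

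What your chain does prove, with $t_0=\delta$, is a segment-wise version. Suppose that for every $\eta$ there are $x$ and a unit $d$ with $[x-\delta d,x+\delta d]\subset K$ and $\psi(x+\delta d)+\psi(x-\delta d)-2\psi(x)>(L_\psi-\eta)\delta^2$. This holds, for instance, when $s\mapsto\psi(x+sd)$ has second derivative at least $L_\psi-\eta/2$ a.e.\ on $[-\delta,\delta]$. Examples are quadratics, and radial segments for the Huber-type potentials of the $\ell_1$ and $\ell_2$ proxes. Then $L_\psi-\eta<L_\epsilon+4\epsilon/\delta^2$.
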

\begin{proof}
    Recall Alexandrov's theorem \cite{alexandrov1939almost,busemann1936krummungsindikatrizen}: if $\psi$ is convex, then the set-valued function $\nabla \psi$ is differentiable almost everywhere. Two facts arise from this. There exists a quadratic expansion of $\psi$ almost everywhere (\cite[Chapter 13]{VarAnalRockafellar},\cite{rockafellar1999second}), written as 
\begin{equation*}
    \psi(x+tw) = \psi(x) + t\langle \nabla \psi(x),w \rangle + \frac{t^2}{2}\langle\nabla^2 \psi(x) w, w \rangle + o(t^2 \Vert w \Vert^2).
\end{equation*}
In particular, for $t>0$
\begin{equation*}
    \frac{\psi(x+tw) + \psi(x-tw)-2\psi(x)}{t^2} \underset{t\to 0}{\to}  \langle \nabla^2 \psi(x) w, w \rangle.
\end{equation*}
Moreover $\psi \in C^{1,1}(K)$ with Lipschitz constant of the gradient $L_\psi$ then (by Rademacher's theorem and the definition of the derivative \cite[Section 3.1]{evans2018measure})
\begin{equation*}
    \mathrm{ess }\sup_{x\in K} \Vert \nabla^2 \psi(x)\Vert_{op} \leq L_\psi<+\infty
\end{equation*}
This inequality is in fact an equality. Indeed, denote by $M$ the essential supremum norm of $\nabla^2 \psi$. Take $x,y \in K$, and $f: t \to \nabla \psi(x + t(y-x))$ for $t\in[0,1]$. $f$ is differentiable almost everywhere in $[0,1]$ and we have $f'(t) = \nabla^2 \psi(x+t(y-x))(y-x)$. As $\nabla \psi$ is Lipschitz continuous, it is absolutely continuous on lines and hence,
\begin{equation*}
    f(1)-f(0) = \int_0^1 \nabla^2 \psi(x+t(y-x))(y-x) dt \leq \int_0^1 M \Vert y-x \Vert dt =  M \Vert y-x \Vert.
\end{equation*}
Thus, by definition of the Lipschitz continuity, $L_\psi \leq M$. Therefore $M= L_\psi$.

Now construct for all $k \in \mathbb{N}\backslash \{0\}$ the set
\begin{equation*}
    E_k = \left\{ x \in \mathrm{int}~K \Big| \nabla^2 \psi(x) \text{ exists and } \Vert \nabla^2 \psi(x) \Vert_{op} >L_\psi -\frac{1}{k}\right\}
\end{equation*}
For each $k$, by definition of the essential supremum norm, $E_k$ is measurable. Now construct a sequence $\{x_k\}_{k\in\mathbb{N}\backslash\{0\}}\in \mathrm{int}~K$, where $\psi$ is twice differentiable and such that 
\begin{equation*}
   \Vert \nabla^2 \psi(x_k) \Vert_{op} \uparrow_{k\to+\infty}  L_\psi. 
\end{equation*}
Such sequence exists by Alexandrov's theorem \cite{alexandrov1939almost}. Where it exists, $\nabla^2 \psi(x)$ is symmetric, hence $\Vert \nabla^2 \psi(x) \Vert_{op} = \lambda_{\max}(\nabla^2 \psi(x))$. Construct a sequence of unit eigenvectors $\{u_k\}_{k \in \mathbb{N}\backslash\{0\}}$ of the eigenvalues $\lambda_{\max}(\nabla^2 \psi(x_k))$. Let $t>0$. We have
\begin{equation*}
    \frac{\psi(x_k+tu_k) + \psi(x_k-tu_k)-2\psi(x_k)}{t^2} = \lambda_{\max}(\nabla^2 \psi(x_k)) + o(1).
\end{equation*}
For every $\eta$, there exists $k_0 \in \mathbb{N}\backslash\{0\}$, such that for all $k>k_0$ there exists $t_k$>0 such that $[x_k -t_k u_k, x_k + t_k u_k]\in K$. We have
\begin{equation*}
    \frac{\psi(x_k+t_k u_k) + \psi(x_k-t_k u_k)-2\psi(x_k)}{t_k^2} > L_\psi - \eta.
\end{equation*}
On the other hand, if $\sup_{x \in K} |\psi_\epsilon(x) - \psi(x)|=\epsilon$, then
\begin{align*}
    \frac{\psi(x_k+t_k u_k) + \psi(x_k-t_k u_k)-2\psi(x_k)}{t_k^2} & \leq \frac{\psi_\epsilon(x_k+t_k u_k) + \psi_\epsilon(x_k-t_k u_k)-2\psi_\epsilon(x_k)}{t_k ^2} + \frac{4\epsilon}{t_k ^2} \\& \leq L_\epsilon \Vert u_k \Vert^2 +\frac{4\epsilon}{t_k^2} = L_\epsilon + \frac{4\epsilon}{t_k^2}
\end{align*}
Now, taking $t_0 = \inf_{k\geq k_0} t_k$, we get 
\begin{equation*}
    L_\psi - \eta < L_\epsilon + \frac{4 \epsilon}{t_0^2}.
\end{equation*}
If the maximum of the curvature is "attained away" from the boundary, then $t_0$ is bounded away from $0$. Indeed, in this case there exists $\delta>0$ such that the following set is not empty for all $\eta>0$
\begin{equation*}
        E_\eta = \left\{ x \in \mathrm{int}~K \Big| \nabla^2 \psi(x) \text{ exists and } \Vert \nabla^2 \psi(x) \Vert_{op} >L_\psi -\eta\right\} \bigcap \left\{ x \in K \Big| \mathrm{dist}(x,\partial K)>\delta\right\},
    \end{equation*}
    then $\inf_{k\geq k_0} t_k \geq \delta$.
\end{proof}
\begin{remark}
    Convexity of $\psi$ is necessary for Theorem \ref{th:Lipschitzness_lower_bound} to hold. $h:x \mapsto x$ approximates with error $\epsilon$ any $\psi:x \mapsto x + \epsilon \cos(\frac{x}{\epsilon})$ even though $h''(x) = 0$, while $\Vert\psi''(x)\Vert_{\ell_\infty}=1/\epsilon$. 
\end{remark}
\begin{remark}
    The maximum curvature is attained away from the boundary for instance for the functions $\psi$ associated with the proximal operator of the $\ell_1$-norm, the $\ell_2$-norm or the squared $\ell_2$-norm.
\end{remark}
If this theorem highlights an impossibility to approximate all $C^{1,1}$ convex functions with arbitrary precisions when $L_\epsilon<L_\psi$, we can also see that in some particular cases, one can reach such arbitrary precision while allowing smaller $L_\epsilon$. Indeed, if the maximum value of the curvature is attained at the boundary then $t_0$ goes to $0$ (which renders the bound essentially vacuous). We can illustrate this fact in dimension 1. Take $\psi:[0,1]\to \RR, x\mapsto \frac{x^2}{2}-\frac{x^3}{12}$. Then, $\psi''(x) = 1-\frac{x}{2}$, attaining its maximum value at $x=0$. By construction, the sets $E_k$ are 
\begin{equation*}
    \left\{ x \in [0,1] \Big| \psi''(x)>1 -\frac{1}{k}\right\}.
\end{equation*}
Therefore, one can see that $E_k \to \{0\}$ as $k$ goes to infinity, therefore $t_0\downarrow 0$. We can construct $\psi_n$ an approximation of $\psi$ such that $L_{\psi_n}<L_\psi$. Take for all $n\geq 1$, $\psi_n(x):= \psi(x) - \frac{x^2}{2n}$. $\psi_n$ is $\frac{1}{2n}$-close to $\psi$ on $[0,1]$.

On the other hand, we can construct the following example on any segment $[-a,a]:=\{\lambda (-a) + (1-\lambda) a| \lambda \in [0,1] \}$, where $a\in\Hi$ and $\Vert a \Vert = \epsilon$. 
\begin{proposition} \label{prop:quality_typed}Let $L>0$, and $\psi: x \to \frac{L}{2}\Vert x \Vert^2$. We approximate $\psi$ with $\psi_\epsilon$, a twice continuously differentiable function with $L'$-Lipschitz continuous gradient. Suppose that $L'<L$. On any segment $[-a,a]$ where $a\in\Hi$ and $\Vert a \Vert = \epsilon$, we have
\begin{equation}
    \sup_{x\in[-a,a]} |\psi_\epsilon(x)-\psi(x)| \geq (L-L') \frac{\epsilon^2}{4}.
\end{equation}
\end{proposition}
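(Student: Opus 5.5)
The plan is to reuse the second-order difference argument from the proof of Theorem \ref{th:Lipschitzness_lower_bound}. This time I apply it at a single explicit point, the midpoint $0$ of the segment, with the step equal to the half-length $a$ itself. For a quadratic the second difference is exact, so no Alexandrov expansion or limiting argument is needed.

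Set $\delta := \sup_{x\in[-a,a]} |\psi_\epsilon(x)-\psi(x)|$. The three points $a$, $-a$ and $0$ all lie in $[-a,a]$.

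\textbf{Second difference of $\psi$.} Since $\psi(x)=\frac{L}{2}\Vert x\Vert^2$, we get exactly
\begin{equation*}
\psi(a)+\psi(-a)-2\psi(0) = L\Vert a\Vert^2 = L\epsilon^2 .
\end{equation*}

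\textbf{Second difference of $\psi_\epsilon$.} I bound it from above using only the $L'$-Lipschitz continuity of $\nabla\psi_\epsilon$; twice differentiability is not even needed. Write
\begin{equation*}
\psi_\epsilon(a)-\psi_\epsilon(0) = \int_0^1 \langle \nabla\psi_\epsilon(ta), a\rangle\,dt, \qquad \psi_\epsilon(-a)-\psi_\epsilon(0) = -\int_0^1 \langle \nabla\psi_\epsilon(-ta), a\rangle\,dt .
\end{equation*}
Summing these gives $\int_0^1 \langle \nabla\psi_\epsilon(ta)-\nabla\psi_\epsilon(-ta), a\rangle\,dt$. By Cauchy--Schwarz and the Lipschitz bound, the integrand is at most $2tL'\Vert a\Vert^2$. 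Integrating yields
\begin{equation*}
\psi_\epsilon(a)+\psi_\epsilon(-a)-2\psi_\epsilon(0)\le L'\epsilon^2 .
\end{equation*}
Alternatively, Taylor's formula with $\Vert\nabla^2\psi_\epsilon\Vert_{op}\le L'$ gives the same bound.

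\textbf{Comparison.} Replacing each of $\psi(a)$, $\psi(-a)$ and $-2\psi(0)$ by its $\psi_\epsilon$ counterpart costs at most $\delta$, $\delta$ and $2\delta$ respectively, so
\begin{equation*}
L\epsilon^2 = \psi(a)+\psi(-a)-2\psi(0) \le \psi_\epsilon(a)+\psi_\epsilon(-a)-2\psi_\epsilon(0) + 4\delta \le L'\epsilon^2 + 4\delta .
\end{equation*}
Rearranging gives $\delta \ge (L-L')\epsilon^2/4$, which is the claim. The assumption $L'<L$ only serves to make this bound nontrivial.

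\textbf{Main obstacle.} There is no real difficulty. The only care needed is the sign bookkeeping in the $4\delta$ term and the use of Lipschitz continuity of the gradient, rather than convexity, to control the second difference of $\psi_\epsilon$ from above, since $\psi_\epsilon$ is not assumed convex.
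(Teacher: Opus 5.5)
Your proof is correct and is essentially the paper's argument. The paper shows $\psi-\psi_\epsilon$ is $(L-L')$-strongly convex via the Hessian bound and applies the strong convexity inequality at $\lambda=1/2$ on $[-a,a]$; this is exactly your inequality $h(a)+h(-a)-2h(0)\ge (L-L')\epsilon^2$ for $h=\psi-\psi_\epsilon$, followed by the same $4\delta$ bookkeeping, and your integral bound via the Lipschitz gradient merely replaces the Hessian comparison, so you do not need twice differentiability.
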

\begin{proof}
    We have $\psi_\epsilon''(x) - \psi''(x) = \psi_\epsilon''(x) -L \leq L'-L <0$. Thus $\psi-\psi_\epsilon$ is $L-L'$-strongly convex. Hence, by \cite[Definition 5.16]{beck2017first}, we have for $\lambda \in [0,1]$
    \begin{equation*}
        -(\psi_\epsilon-\psi)(-\lambda a + (1-\lambda) a) \leq -\lambda (\psi_\epsilon-\psi)(-a) - (1-\lambda)(\psi_\epsilon- \psi)(a) - \frac{L-L'}{2}\lambda(1-\lambda) \Vert 2 a \Vert^2.
    \end{equation*}
    Set $\lambda = 1/2$. We have
    \begin{align*}
        -(\psi_\epsilon-\psi)(0) & \leq -\frac{1}{2}\left[(\psi_\epsilon-\psi)(-a) + (\psi_\epsilon-\psi)(a) + (L-L') \Vert a \Vert^2 \right].
    \end{align*}
    Denote by $M:=\sup_{x \in [-a,a]} |(\psi_\epsilon-\psi)(x)|$. Then,
    \begin{equation*}
        M \geq \frac{1}{2}\left[-M - M + (L-L') \Vert a \Vert^2 \right],
    \end{equation*}
    hence,
    \begin{equation*}
        M \geq \frac{(L-L') \Vert a \Vert^2}{4}.
    \end{equation*}
\end{proof}
We end this discussion by emphasizing that it does not seem that there exist rules to obtain the location of maximum curvature of $\psi$, without knowing $\psi$ explicitly. 
\paragraph{Quality of the approximation.}
We know now that the location of the points of highest curvature has a direct impact on the values of $L_\epsilon$ with respect to the values of $L_\psi$, a similar observation can be made for the quality of the approximation of the proximal operator with type (d) approximation, i.e., how $\Vert \prox_\phi(x) - \g_d(x) \Vert$ behaves with respect to the location of $x$ in $K$.
\begin{proposition} \label{prop:sigma_d}
    Let $\psi:\Hi \to \overline{\RR}$ be a continuously differentiable convex function with $L_\psi$-Lipschitz continuous gradient. Let $K$ be a compact convex subset of $\Hi$ and let $\epsilon>0$ such that the continuously differentiable function $\psi_\epsilon:\Hi \to \overline{\RR}$ with 
    $L_\epsilon$ Lipschitz continuous gradient, satisfies
    \begin{equation}
        \Vert \psi_\epsilon - \psi \Vert_{\ell_\infty(K)}= \epsilon.
    \end{equation}
    Then, 
    \begin{equation*}
    r(x)= \min\left\{\frac{1}{L_\epsilon}, \sup\{t>0, x - t(\nabla \psi_\epsilon(x) - \nabla \psi(x)) \in K\}\right\}.
    \end{equation*} 
    For all $0<t\leq r(x)$
     \begin{equation}
        \Vert \nabla \psi_\epsilon(x) - \nabla \psi(x) \Vert_2 \leq 2\sqrt{\frac{\epsilon}{t}} \frac{1}{\sqrt{2-L_\epsilon}}.
     \end{equation}
     In particular when $r(x) = \frac{1}{L_\epsilon}$ we have,
     \begin{equation}
        \Vert \nabla \psi_\epsilon(x) - \nabla \psi(x) \Vert_2 \leq 2\sqrt{L_\epsilon\epsilon}.
    \end{equation}
\end{proposition}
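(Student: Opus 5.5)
The plan is to compare a descent-lemma upper bound for $\psi_\epsilon$ with a convexity lower bound for $\psi$ at one well-chosen test point, and then let the sup-norm closeness absorb the function values. Fix $x \in K$ and set $d := \nabla \psi_\epsilon(x) - \nabla \psi(x)$. If $d=0$ there is nothing to prove, so assume $d\neq 0$. For $0<t\leq r(x)$, define the test point
\begin{equation*}
y := x - t d .
\end{equation*}
By the definition of $r(x)$, $y\in K$. When $t$ equals the supremum in the definition of $r(x)$, this follows from closedness of $K$. Since $K$ is convex, the whole segment $[x,y]$ lies in $K$. This matters because the Lipschitz bounds and the closeness bound are only assumed on $K$.

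The first key step is the descent lemma for $\psi_\epsilon$ along $[x,y]$:
\begin{equation*}
\psi_\epsilon(y) \leq \psi_\epsilon(x) - t\langle \nabla\psi_\epsilon(x), d\rangle + \frac{L_\epsilon t^2}{2}\Vert d\Vert^2 .
\end{equation*}
The second key step is the gradient inequality for the convex function $\psi$:
\begin{equation*}
\psi(y) \geq \psi(x) - t\langle \nabla \psi(x), d\rangle .
\end{equation*}
Subtracting the second inequality from the first gives
\begin{equation*}
(\psi_\epsilon-\psi)(y) - (\psi_\epsilon-\psi)(x) \leq -t\Vert d\Vert^2 + \frac{L_\epsilon t^2}{2}\Vert d\Vert^2 .
\end{equation*}
The hypothesis $\Vert \psi_\epsilon - \psi\Vert_{\ell_\infty(K)}=\epsilon$, applied at the two points $x,y\in K$, bounds the left-hand side below by $-2\epsilon$.

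Rearranging yields
\begin{equation*}
t\left(1-\frac{L_\epsilon t}{2}\right)\Vert d\Vert^2 \leq 2\epsilon,
\qquad\text{i.e.}\qquad
\Vert d\Vert^2 \leq \frac{4\epsilon}{t(2-L_\epsilon t)} .
\end{equation*}
The factor $2-L_\epsilon t$ is at least $1$ because $t\leq 1/L_\epsilon$. Taking square roots gives the claimed bound $2\sqrt{\epsilon/t}\,(2-L_\epsilon t)^{-1/2}$. I read the factor $\sqrt{2-L_\epsilon}$ in the statement as $\sqrt{2-L_\epsilon t}$; this is the dimensionally consistent form, and it is what produces the special case. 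Indeed, in the case $r(x)=1/L_\epsilon$ we may take $t=1/L_\epsilon$, which gives $\Vert d\Vert^2\leq 4\epsilon L_\epsilon$, i.e. $\Vert d\Vert \leq 2\sqrt{L_\epsilon\epsilon}$.

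The main obstacle is making sure the test point stays inside $K$, since nothing is assumed about $\psi_\epsilon$ outside $K$. This is exactly why $r(x)$ carries the second term in the minimum, and why convexity of $K$ is needed to apply the descent lemma on the segment. The choice of direction $-d$ is what makes the two linear terms combine into $-t\Vert d\Vert^2$; everything after that is routine algebra.
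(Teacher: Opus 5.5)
Your proof is correct and follows the paper's argument: the descent lemma for $\psi_\epsilon$, the gradient inequality for the convex $\psi$, the sup-norm bound at $x$ and $y$ giving the $2\epsilon$ slack, and the test point $y = x - t(\nabla\psi_\epsilon(x)-\nabla\psi(x))$. Your reading of the factor as $(2-L_\epsilon t)^{-1/2}$ matches what the paper's proof actually derives, and your check that $y$ and the segment $[x,y]$ stay in $K$, via closedness and convexity, makes explicit a point the paper leaves implicit.
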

\begin{proof}
    By the descent lemma, and convexity of $\psi$, we have for all $x,y \in K$ that
    \begin{equation*}
        \psi(y)- \psi(x) +\psi_\epsilon(y) - \psi_\epsilon(x) + \langle \nabla \psi(x), y-x \rangle + \langle \nabla \psi_\epsilon(x), x-y \rangle \leq \frac{L_\epsilon}{2}\Vert y-x \Vert^2_2 
    \end{equation*}
    which yields by assumption
    \begin{equation*}
        \langle \nabla \psi_\epsilon(x)- \nabla \psi(x), x-y \rangle \leq \frac{L_\epsilon}{2}\Vert y-x \Vert^2_2 +2 \epsilon
    \end{equation*}
    Now, take $y = x - t(\nabla \psi_\epsilon(x) - \nabla \psi(x))$ with $t>0$ and such that $y\in K$. We have 
    \begin{equation*}
        \Vert \nabla \psi_\epsilon(x) - \nabla \psi(x) \Vert^2_2 \leq \frac{tL_\epsilon }{2} \Vert \nabla \psi_\epsilon(x) - \nabla \psi(x) \Vert^2_2 + \frac{2\epsilon}{t},
    \end{equation*}
    rearraging,
    \begin{equation*}
        \Vert \nabla \psi_\epsilon(x) - \nabla \psi(x) \Vert^2_2 \leq \frac{4\epsilon}{t} \frac{1}{2-tL_\epsilon}
    \end{equation*}
    The right hand side is minimal when $t\mapsto t(2-tL_\epsilon)$ is maximal, i.e., when $t = \frac{1}{L_\epsilon}$. It yields
    \begin{equation*}
        \Vert \nabla \psi_\epsilon(x) - \nabla \psi(x) \Vert_2 \leq 2\sqrt{L_\epsilon\epsilon}.
    \end{equation*}
    This can only hold whenever $K$ is large enough to contain for all $x\in K$ the point $x-\frac{1}{L_\epsilon} (\nabla \psi_\epsilon(x) - \nabla \psi(x))$. Otherwise, we have the position-aware bound. Define 
    \begin{equation*}
    r(x)= \min\left\{\frac{1}{L_\epsilon}, \sup\{t>0, x - t(\nabla \psi_\epsilon(x) - \nabla \psi(x)) \in K\}\right\}.
    \end{equation*} 
    For all $0<t\leq r(x)$
     \begin{equation*}
        \Vert \nabla \psi_\epsilon(x) - \nabla \psi(x) \Vert_2 \leq 2\sqrt{\frac{\epsilon}{t}} \frac{1}{\sqrt{2-tL_\epsilon}}
     \end{equation*}
\end{proof}
\begin{remark}
    To allow a tighter Lipschitz continuity of $\nabla \psi_\epsilon$, we can reduce the diameter of $K$ (thus reducing the maximum value of $t_0$), but if $t_0$ is bounded away from $0$, we inevitably pay a cost in the quality of the intended approximation: $L_\epsilon$ is not independent of $\epsilon$ through Theorem \ref{th:Lipschitzness_lower_bound}. This dependence also encodes the dependence w.r.t. $L_\psi$ of this bound.
\end{remark}
This result could be generalized non-convex $\psi$, and would make appear explictly the Lipschitz constant $\psi$. As an example, such a bound can be deduced from the Landau–Kolmogorov inequality, which upper bounds the infinite norm of the derivatives of 1-dimensional functions on intervals.
\begin{proposition}
    Let $\psi:\RR \to \overline{\RR}$ be a twice continuously differentiable function with $L_\psi$-Lipschitz continuous derivative. Let $I \subset \RR$ and let $\epsilon>0$ such that a twice continuously differentiable function $\psi_\epsilon:\RR \to \overline{\RR}$ with 
    $L_\epsilon$ Lipschitz continuous derivative, satisfies
    \begin{equation}
        \Vert \psi_\epsilon - \psi \Vert_{\ell_\infty(I)}= \epsilon.
    \end{equation}
    Then,
    \begin{equation}
        \Vert \psi_\epsilon'- \psi' \Vert_{\ell_\infty(I)} \leq 2 \sqrt{(L_\psi + L_\epsilon)\epsilon }.
    \end{equation}
\end{proposition}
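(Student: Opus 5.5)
We plan to apply the Landau--Kolmogorov inequality on an interval to the difference $h := \psi_\epsilon - \psi$. By hypothesis, $h$ is twice continuously differentiable on $I$ and satisfies $\Vert h \Vert_{\ell_\infty(I)} = \epsilon$. Its derivative $h' = \psi_\epsilon' - \psi'$ is Lipschitz with constant at most $L_\psi + L_\epsilon$, by the triangle inequality. Hence $\Vert h'' \Vert_{\ell_\infty(I)} \leq L_\psi + L_\epsilon$. Unlike the convex case of Proposition \ref{prop:sigma_d}, no convexity is used: the Lipschitz constants of both derivatives enter symmetrically.

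The core step is the elementary Taylor argument that underlies Landau's inequality. Fix $x \in I$ and a step $t>0$ with $x+t \in I$ (or $x - t \in I$ if $x$ lies near the right endpoint). Taylor's formula with Lagrange remainder gives
\begin{equation*}
h(x+t) = h(x) + t h'(x) + \frac{t^2}{2} h''(\xi).
\end{equation*}
Solving for $h'(x)$ yields
\begin{equation*}
|h'(x)| \leq \frac{2\epsilon}{t} + \frac{t}{2}(L_\psi+L_\epsilon).
\end{equation*}
Optimizing over $t$, at $t^\star = 2\sqrt{\epsilon/(L_\psi+L_\epsilon)}$, gives $|h'(x)| \leq 2\sqrt{(L_\psi+L_\epsilon)\epsilon}$, which is exactly the claimed constant. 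This is Landau's classical half-line inequality $\Vert h' \Vert^2 \leq 4 \Vert h \Vert \Vert h''\Vert$. We would cite it directly, while noting that on a half-line or on $\RR$ the constant $4$ is valid, and on $\RR$ it even improves to $2$.

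The main obstacle is the interval $I$. The bound requires, for every $x \in I$, a one-sided step of length $t^\star$ that stays in $I$. This holds if $|I| \geq t^\star$, since then either $x+t^\star$ or $x-t^\star$ lies in $I$. We would therefore state the result under this implicit assumption, which is automatic when $I$ is a half-line or $\RR$, or when $\epsilon$ is small relative to $|I|^2(L_\psi+L_\epsilon)$. When $I$ is short, the optimal step is unavailable and one must fall back on a position-aware bound with $t$ limited by the distance to the far endpoint, mirroring the $r(x)$ of Proposition \ref{prop:sigma_d}. Alternatively, one can invoke the interval version of Landau--Kolmogorov, which carries an extra term of order $\epsilon/|I|$. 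Finally, $\psi$ and $\psi_\epsilon$ are only assumed $C^2$ with Lipschitz derivatives, so the bound on $h''$ holds pointwise everywhere. Taking the supremum over $x \in I$ then concludes the proof.
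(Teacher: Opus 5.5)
Your route is the paper's: it also sets $h=\psi_\epsilon-\psi$, bounds $\Vert h''\Vert_{\ell_\infty(I)}\le L_\psi+L_\epsilon$, and invokes Landau--Kolmogorov (Kallman--Rota) $\Vert h'\Vert^2\le 4\Vert h\Vert\,\Vert h''\Vert$. Your Taylor computation is just the proof of that inequality.

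The difference is that the paper applies the constant-$4$ inequality on an arbitrary $I$. It holds only on a half-line or on $\RR$, where the translation semigroup behind Kallman--Rota exists, and not on a bounded interval. So your caveat on the length of $I$ is not optional: the statement as written is false for short intervals. Take $\psi\equiv 0$, $\psi_\epsilon(x)=sx$ with $s>0$, and $I=[-\epsilon/s,\epsilon/s]$. Then $\Vert\psi_\epsilon-\psi\Vert_{\ell_\infty(I)}=\epsilon$, and both derivatives are constant, hence Lipschitz with any prescribed $L_\psi,L_\epsilon$. Yet $\Vert\psi_\epsilon'-\psi'\Vert_{\ell_\infty(I)}=s$, which exceeds $2\sqrt{(L_\psi+L_\epsilon)\epsilon}$ once $s$ is large. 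Your restricted version, or the additive interval form of Landau's inequality, is therefore the correct thing to prove.

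One slip in your sufficiency claim. With one-sided steps, $|I|\ge t^\star$ does not guarantee that $x+t^\star$ or $x-t^\star$ lies in $I$: take $x$ the midpoint of an interval of length $t^\star$. That argument needs $|I|\ge 2t^\star$. The condition $|I|\ge t^\star$ does suffice if you use a two-sided difference. With $M:=L_\psi+L_\epsilon$, choose $s_1,s_2\ge 0$ with $s_1+s_2=t^\star$ and $x-s_1,\,x+s_2\in I$. Then
\[
h(x+s_2)-h(x-s_1)=t^\star h'(x)+\tfrac{s_2^2}{2}h''(\xi_2)-\tfrac{s_1^2}{2}h''(\xi_1)
\]
gives $t^\star|h'(x)|\le 2\epsilon+\tfrac{M}{2}(s_1^2+s_2^2)\le 2\epsilon+\tfrac{M}{2}(t^\star)^2$. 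This is the same bound $2\sqrt{M\epsilon}$. You should also state explicitly that $I$ is an interval, since for a general subset $I\subset\RR$ the sup bound on $h$ says nothing about $h'$.
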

\begin{proof}
    We have by the Landau-Kolmogorov inequality (or in this particular case the Kallman-Rota inequality \cite{kallman1970inequality}) that
    \begin{align*}
        \Vert \psi_\epsilon'-\psi' \Vert_{\ell_\infty(I)}^2 &\leq 4 \Vert \psi_\epsilon - \psi \Vert_{\ell_\infty(I)} \Vert \psi_\epsilon''-\psi'' \Vert_{\ell_\infty(I)} \\
        & \leq 4(L_\psi+L_\epsilon) \epsilon
    \end{align*}
    Hence,
    \begin{equation*}
        \Vert \psi_\epsilon'-\psi' \Vert_{\ell_\infty(I)} \leq 2\sqrt{ (L_\psi+L_\epsilon)\epsilon}.
    \end{equation*}
\end{proof}
\paragraph{Admissibility of $\g_d$.} For this approximation, we cannot recover the lim sup inclusion like for the previous approximations, without assuming some continuity of the approximations $\psi_\epsilon$ with respect to $\epsilon$. This requires a new definition of type (d)-approximations which would certainly dilute the generality of the one we have here.
\subsection{Approximations of type (e)}
The approximation of type (e) is constructed by enlarging the subdifferential of the convex potential $\psi$, thus generalizing \cite[Theorem 1]{gribonval2020characterization}.
\begin{definition}{\textbf{Type (e) approximation.}} \label{def:approx_typee}
    We say that $\g_e$ is an $\epsilon$-type (e) approximation of $\prox_\phi$ if for all $y \in \Hi$
    \begin{equation}
        \g_e(y) \in \partial_\epsilon \psi(y).
    \end{equation}
\end{definition}
Assuming that $\psi$ is $L$-Lipschitz smooth, we have that:
\begin{lemma} \label{lm:espilon_subdiff_smooth_bound}
    Let $\psi$ be a proper, l.s.c., continuously differentiable convex function, with $L_\psi$-Lipschitz continuous gradient. The $\epsilon$-subdifferential of $\psi$ at $x \in \dom \psi$ is such that
    \begin{equation}
        \partial_\epsilon \psi(x) \subseteq \{ s \mid \Vert s - \nabla \psi(x) \Vert \leq \sqrt{2 L_\psi \epsilon} \}.
    \end{equation}
\end{lemma}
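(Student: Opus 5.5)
The plan is to combine the defining inequality of the $\epsilon$-subdifferential with the descent lemma for $\psi$, evaluated at a well-chosen test point, and then optimize over a scalar step. Fix $x\in\dom\psi$ and $s\in\partial_\epsilon\psi(x)$. By definition of $\partial_\epsilon\psi$ (in its standard form $\psi(y)\geq \psi(x)+\langle s,y-x\rangle-\epsilon$ for all $y\in\Hi$), we have a global affine minorant of $\psi$ up to $\epsilon$. On the other side, since $\nabla\psi$ is $L_\psi$-Lipschitz, the descent lemma gives for all $y$
\begin{equation*}
    \psi(y)\leq \psi(x)+\langle \nabla\psi(x),y-x\rangle+\frac{L_\psi}{2}\Vert y-x\Vert^2 .
\end{equation*}
Chaining the two inequalities yields, for every $y\in\Hi$,
\begin{equation*}
    \langle s-\nabla\psi(x),y-x\rangle \leq \frac{L_\psi}{2}\Vert y-x\Vert^2+\epsilon .
\end{equation*}

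Next I take $y=x+t\,(s-\nabla\psi(x))$ with $t>0$. Writing $d:=\Vert s-\nabla\psi(x)\Vert$, the previous inequality becomes $t d^2\leq \frac{L_\psi t^2}{2}d^2+\epsilon$, i.e. $d^2\left(t-\frac{L_\psi t^2}{2}\right)\leq\epsilon$. The choice $t=1/L_\psi$ maximizes $t-\frac{L_\psi}{2}t^2$, with value $\frac{1}{2L_\psi}$, giving $d^2\leq 2L_\psi\epsilon$. This is exactly the claimed inclusion.

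The only point needing care is that the test point $y$ must be admissible. Here $\psi$ is differentiable with Lipschitz gradient on all of $\Hi$, so $\dom\psi=\Hi$ and the descent lemma holds globally. No boundary issue arises, unlike in Proposition \ref{prop:sigma_d}, where $K$ constrained $t$. I therefore expect no real obstacle beyond stating the descent lemma (for instance, via the integral form of $\psi(y)-\psi(x)$ along the segment) and the sign convention of the $\epsilon$-subdifferential.
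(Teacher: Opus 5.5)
Your proposal is correct and follows the paper's proof essentially verbatim: the paper also chains the $\epsilon$-subgradient inequality with the descent lemma and then maximizes the resulting concave quadratic in $y$, whose maximizer is exactly your test point $y = x + \frac{1}{L_\psi}(s-\nabla\psi(x))$, giving $\Vert s-\nabla\psi(x)\Vert^2\leq 2L_\psi\epsilon$. You also make explicit two points the paper leaves implicit, namely that $\dom\psi=\Hi$ (so the test point is admissible) and which sign convention of the $\epsilon$-subdifferential is being used.
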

\begin{proof}
    We have $s \in \partial_\epsilon \psi(x)$ equivalent to
    \begin{equation*}
        (\forall y \in \dom \psi), \quad \psi(x) + \langle s , y-x \rangle - \epsilon \leq \psi(y).
    \end{equation*}
    And the $L_\psi$-smoothness of $\psi$ is equivalent to 
    \begin{equation*}
        (\forall y \in \dom \psi), \quad \psi(x) + \langle \nabla \psi(x), y-x \rangle + \frac{L_\psi}{2} \Vert y - x \Vert^2 \geq \psi(y)
    \end{equation*}
    Hence we have for all $y \in \dom \psi$
    \begin{align*}
        \langle \nabla \psi(x)-s, y-x \rangle + \frac{L_\psi}{2} \Vert y - x \Vert^2 & \geq  - \epsilon. \\
       -\langle \nabla \psi(x)-s, y-x \rangle - \frac{L_\psi}{2} \Vert y - x \Vert^2-\epsilon & \leq 0
    \end{align*}
    The supremum of the left-hand-side is attained at $y = x - \frac{1}{L_\psi} \left(\nabla \psi(x) - s \right)$. Thus,
    \begin{equation*}
        \frac{1}{L_\psi} \langle \nabla \psi(x), s - \nabla \psi(x) \rangle + \frac{1}{2L_\psi} \Vert s - \nabla \psi(x) \Vert^2 \geq \frac{1}{L_\psi}\langle s , s-\nabla \psi(x) \rangle - \epsilon,
    \end{equation*}
    which is equivalent to
    \begin{equation*}
        \left( \frac{1}{2L_\psi}- \frac{1}{L_\psi} \right) \Vert s- \nabla \psi(x) \Vert^2 \geq - \epsilon,
    \end{equation*}
    and thus
    \begin{equation*}
        \Vert s- \nabla \psi(x) \Vert \leq \sqrt{2L_\psi\epsilon}.
    \end{equation*}
\end{proof}
From this inclusion we extract a necessary condition for $x$ to be a fixed point of $\g_e$
\begin{equation}
    x \in \Fix \g_e \implies x \in \partial_\epsilon \psi(x) \implies \Vert x- \nabla \psi(x) \Vert \leq \sqrt{2L_\psi\epsilon}
\end{equation}
In general we have
\begin{proposition} \label{prop:sigma_e}
    Suppose that Assumption \ref{ass:weaklyconvex} holds. Let $\g_e$ be an $\epsilon$-type (e) approximation of $\prox_\phi$ (which is $L_\psi$-Lipschitz continuous). Then, we have
    \begin{equation}
        (\forall x \in \dom \psi), \quad \Vert \g_e(x) - \prox_\phi(x) \Vert_2 \leq \sqrt{2 L_\psi \epsilon}.
    \end{equation}
    In particular, if fixed point exist,
    \begin{equation}
       (\forall \bar x \in \Fix \g_e), \quad  \Vert \bar x - \prox_\phi(\bar x) \Vert_2 \leq \sqrt{2L_\psi\epsilon}.
    \end{equation}
\end{proposition}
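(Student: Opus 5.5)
The bound will follow directly from Lemma \ref{lm:espilon_subdiff_smooth_bound}, once we check that $\psi$ satisfies its hypotheses and that $\nabla\psi = \prox_\phi$.

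First, under Assumption \ref{ass:weaklyconvex}, $\phi$ is $\rho$-weakly convex with $\rho = 1-1/L_\psi<1$ and prox-bound $\lambda_\phi>1$. Hence $\prox_\phi$ is single-valued and $L_\psi$-Lipschitz continuous on $\Hi$, by the equivalence recalled from \cite[Proposition 2]{gribonval2020characterization}. By \cite[Theorem 1]{gribonval2020characterization}, there is a convex l.s.c. $\psi$ with $\prox_\phi(y)\in\partial\psi(y)$ for all $y$. Explicitly, $\psi = \frac{1}{2}\Vert\cdot\Vert^2 - u + C$, where $u$ is the Moreau envelope of parameter $1$. (The displayed relation in the paper has the opposite sign; the correct one is $\psi(x)=\frac12\Vert x\Vert^2-u(x)$, since $\frac12\Vert y\Vert^2-u(y)=\sup_x \langle x,y\rangle-\phi(x)-\frac12\Vert x\Vert^2$.) This $\psi$ is finite everywhere because $\lambda_\phi>1$.

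Next, since $\prox_\phi$ is continuous and single-valued, \cite[Corollary 1]{gribonval2020characterization} gives $\psi\in C^1(\Hi)$ with $\nabla\psi=\prox_\phi$. Because $\prox_\phi$ is $L_\psi$-Lipschitz, $\psi$ is convex with $L_\psi$-Lipschitz gradient, so all hypotheses of Lemma \ref{lm:espilon_subdiff_smooth_bound} hold.

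Now fix $x\in\dom\psi=\Hi$. By Definition \ref{def:approx_typee}, $\g_e(x)\in\partial_\epsilon\psi(x)$. Lemma \ref{lm:espilon_subdiff_smooth_bound} then gives
\begin{equation*}
\Vert \g_e(x)-\prox_\phi(x)\Vert = \Vert \g_e(x)-\nabla\psi(x)\Vert\leq\sqrt{2L_\psi\epsilon}.
\end{equation*}
For the fixed-point statement, take $\bar x\in\Fix\g_e$, so that $\g_e(\bar x)=\bar x$. Substituting into the previous inequality yields $\Vert\bar x-\prox_\phi(\bar x)\Vert\leq\sqrt{2L_\psi\epsilon}$.

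The main obstacle is the identification step: showing that $\psi$ is differentiable everywhere with $\nabla\psi=\prox_\phi$ and that it is finite on all of $\Hi$. Lemma \ref{lm:espilon_subdiff_smooth_bound} uses the descent inequality at the point $y=x-\frac1{L_\psi}(\nabla\psi(x)-s)$, so $\psi$ must be finite and smooth there as well. Global finiteness from $\lambda_\phi>1$ handles this, and it is the only place where the prox-bound assumption is genuinely needed.
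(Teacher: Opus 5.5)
Your proof is correct and follows the paper's intended argument: the paper gives no separate proof and obtains the bound directly from Lemma \ref{lm:espilon_subdiff_smooth_bound} applied to $\psi$ with $\nabla\psi=\prox_\phi$, exactly as you do. Your sign correction $\psi=\frac12\Vert\cdot\Vert^2-u+C$ is also right; as a minor aside, finiteness of $\psi$ on all of $\Hi$ already follows from $\rho<1$, because $\phi+\frac12\Vert\cdot\Vert^2$ is then strongly convex, so the prox-bound hypothesis is not really the essential ingredient.
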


\paragraph{Lipschitz continuity of $\g_e$.}
First, we recall some classical results.
The Hausdorff distance between two (nonempty) compact convex sets A and B
is defined as follows \cite[Theorem V.3.3.8]{hiriart2013convex}
\begin{equation}
  \Delta_H(A,B)
  = \max\bigl\{\, \lvert \sigma_A(d) - \sigma_B(d) \rvert : \Vert d \Vert=1 \,\bigr\}.
\end{equation}
where $\sigma_A$ is the support function of $A$ \cite[Definition V.2.1.1]{hiriart2013convex}.
\begin{definition}
Let $A$ be a nonempty set in $\RR^n$. The function $\sigma_A : \RR^n \to \RR \cup \{+\infty\}$
defined by
\begin{equation}
  \RR^n \ni x \longmapsto \sigma_A(x)
  := \sup\{ \langle a, x \rangle : a \in A \}
\end{equation}
is called the \emph{support function} of $A$.
\end{definition}
\begin{theorem}\cite[Theorem XI.4.1.3]{hiriart-urruty1993convexII}
Let $\psi:\RR^n \to \RR$ be a convex Lipschitzian function on $\RR^n$.
Then there exists $K>0$ such that, for all $x,x'\in\RR^n$ and
$\epsilon,\epsilon'>0$,
\begin{equation}
\Delta_H\!\bigl(\partial_{\epsilon} \psi(x),\, \partial_{\epsilon'} \psi(x')\bigr)
  \le \frac{K}{\min\{\epsilon,\epsilon'\}}
     \bigl(\Vert x-x'\Vert + \lvert \epsilon - \epsilon' \rvert \bigr).
\end{equation}
\end{theorem}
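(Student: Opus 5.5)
The plan is to pass through support functions: the Hausdorff distance between two compact convex sets is the sup over unit directions $d$ of $|\sigma_A(d)-\sigma_B(d)|$. So it suffices to bound $|\sigma_{\partial_\epsilon\psi(x)}(d)-\sigma_{\partial_{\epsilon'}\psi(x')}(d)|$ uniformly in $\Vert d\Vert=1$. Let $L$ denote the Lipschitz constant of $\psi$.

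\textbf{Step 1: the support function as an infimum of difference quotients.} For finite convex $\psi$ and $\epsilon>0$ one has the classical formula
\begin{equation*}
\sigma_{\partial_\epsilon\psi(x)}(d)=\inf_{t>0} q_{x,\epsilon}(t), \qquad q_{x,\epsilon}(t):=\frac{\psi(x+td)-\psi(x)+\epsilon}{t}.
\end{equation*}
I take this from \cite{hiriart-urruty1993convexII}. If needed, it can be rederived: the inequality $\le$ is immediate from the definition of $\partial_\epsilon\psi$, and the reverse inequality follows by conjugate duality.

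\textbf{Step 2: a priori bounds.} If $s\in\partial_\epsilon\psi(x)$, then for all $t>0$,
\begin{equation*}
t\langle s,d\rangle\le \psi(x+td)-\psi(x)+\epsilon\le Lt+\epsilon.
\end{equation*}
Letting $t\to\infty$ and taking the sup over unit $d$ gives $\partial_\epsilon\psi(x)\subseteq \mathbb{B}(0,L)$. In particular the sets are nonempty, compact and convex, and $\sigma\le L$.

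Lipschitzness of $\psi$ also gives $q_{x,\epsilon}(t)\ge \epsilon/t - L$. For $t<\epsilon/(2L)$ this lower bound exceeds $L\ge\sigma$, so such $t$ are irrelevant. Hence the infimum may be restricted to $t\ge \epsilon/(2L)$. Setting $m:=\min\{\epsilon,\epsilon'\}$, both infima (for $(x,\epsilon)$ and for $(x',\epsilon')$) can be taken over the common range $t\ge m/(2L)$.

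\textbf{Step 3: compare the quotients on the common range.} For $t\ge m/(2L)$,
\begin{equation*}
|q_{x,\epsilon}(t)-q_{x',\epsilon'}(t)|\le \frac{2L\Vert x-x'\Vert+|\epsilon-\epsilon'|}{t}\le \frac{2L\left(2L\Vert x-x'\Vert+|\epsilon-\epsilon'|\right)}{m}.
\end{equation*}
Two infima over the same index set differ by at most the sup of the pointwise differences, so the same bound holds for the difference of support functions. Taking the max over unit $d$ then yields the claim with $K=2L\max\{2L,1\}$.

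The main obstacle is Step 2, namely justifying that the infimum in the support-function formula is effectively attained on $t\ge m/(2L)$. This is exactly where the $1/\min\{\epsilon,\epsilon'\}$ factor arises; without it the quotients are not uniformly comparable as $t\to0$. Step 1 is standard and I will cite it rather than reprove it.
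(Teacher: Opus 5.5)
The paper does not prove this statement. It is quoted from Hiriart-Urruty--Lemar\'echal and used only as a black box (in the proof of Proposition~\ref{prop:steiner_point}), so there is no internal argument to compare yours against.

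Your proof is correct and follows essentially the standard textbook route. You identify $\sigma_{\partial_\epsilon\psi(x)}(d)$ with the $\epsilon$-directional derivative $\inf_{t>0}q_{x,\epsilon}(t)$. You then use the Lipschitz bound to confine the infimum to $t\ge\epsilon/(2L)$. Finally you compare the quotients uniformly on the common range $t\ge\min\{\epsilon,\epsilon'\}/(2L)$, which also gives the explicit constant $K=2L\max\{2L,1\}$.

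Two details are worth a line each:
\begin{enumerate}
\item In Step 2, the bound $q_{x,\epsilon}(t)>L\ge\sigma_{\partial_\epsilon\psi(x)}(d)$ for $t<\epsilon/(2L)$ does not by itself justify the restriction. You also need $\inf_{t\ge\epsilon/(2L)}q_{x,\epsilon}(t)\le L$, since otherwise the boundary case $\sigma=L$ is not covered. This follows from $q_{x,\epsilon}(t)\le L+\epsilon/t\to L$ as $t\to\infty$.
\item The threshold $\epsilon/(2L)$ is undefined when $\psi$ is constant. You can handle this by replacing $L$ with any positive Lipschitz bound.
\end{enumerate}
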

Assuming $\psi$ is $L$-smooth, then we have the following bound.
\begin{proposition} \label{prop:lipschitz_e}
    Let $\psi:\RR^n \to \RR$ be a proper, l.s.c., continuously differentiable convex function with $L_\psi$-Lipschitz gradient. Then, for $\epsilon>0$, for all $x,x'\in\RR^n$
    \begin{equation}
        \Delta_H(\partial_\epsilon \psi(x), \partial_\epsilon \psi(x')) \leq L_\psi \Vert x - x'\Vert.
    \end{equation}
\end{proposition}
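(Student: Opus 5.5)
The plan is to work entirely with support functions, using the characterization of the Hausdorff distance recalled above: $\Delta_H(A,B)=\max_{\Vert d\Vert=1}|\sigma_A(d)-\sigma_B(d)|$. First I will check that this formula applies. Since $\psi$ is finite-valued, convex and $L_\psi$-smooth on $\RR^n$, for $\epsilon>0$ the set $\partial_\epsilon\psi(x)$ is nonempty, because it contains $\nabla\psi(x)$. It is also closed and convex. It is bounded by Lemma \ref{lm:espilon_subdiff_smooth_bound}, as it lies in the ball of radius $\sqrt{2L_\psi\epsilon}$ around $\nabla\psi(x)$. Hence it is a nonempty compact convex set.

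The key tool is the classical formula for the support function of the $\epsilon$-subdifferential, namely the $\epsilon$-directional derivative (Hiriart-Urruty--Lemaréchal, Chapter XI):
\begin{equation*}
\sigma_{\partial_\epsilon\psi(x)}(d)=\psi'_\epsilon(x;d)=\inf_{t>0}\frac{\psi(x+td)-\psi(x)+\epsilon}{t}.
\end{equation*}
I will invoke this directly; it follows from conjugate duality, since $\partial_\epsilon\psi(x)=\{s:\psi^*(s)+\psi(x)-\langle s,x\rangle\le\epsilon\}$.

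The main step is to compare the numerators at $x$ and at $x'$ for a fixed unit vector $d$ and a fixed $t>0$. Set $h_x(t)=\psi(x+td)-\psi(x)$. Since $\psi$ is $C^1$,
\begin{equation*}
h_x(t)-h_{x'}(t)=\int_0^t\langle\nabla\psi(x+sd)-\nabla\psi(x'+sd),d\rangle\,ds\le tL_\psi\Vert x-x'\Vert,
\end{equation*}
where the inequality uses Cauchy--Schwarz, $\Vert d\Vert=1$ and the $L_\psi$-Lipschitz continuity of $\nabla\psi$. Dividing by $t$ gives
\begin{equation*}
\frac{h_x(t)+\epsilon}{t}\le\frac{h_{x'}(t)+\epsilon}{t}+L_\psi\Vert x-x'\Vert
\end{equation*}
for every $t>0$. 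Taking the infimum over $t>0$ on both sides yields $\sigma_{\partial_\epsilon\psi(x)}(d)\le\sigma_{\partial_\epsilon\psi(x')}(d)+L_\psi\Vert x-x'\Vert$. Both infima are finite because each set is nonempty and bounded.

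Exchanging the roles of $x$ and $x'$ gives the reverse inequality, so $|\sigma_{\partial_\epsilon\psi(x)}(d)-\sigma_{\partial_\epsilon\psi(x')}(d)|\le L_\psi\Vert x-x'\Vert$ for every unit $d$. Taking the maximum over $\Vert d\Vert=1$ concludes the proof.

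The only delicate point is justifying the $\epsilon$-directional derivative formula for the support function. I will cite it from Hiriart-Urruty--Lemaréchal rather than reprove it. If a self-contained argument is needed, it follows from the representation of $\partial_\epsilon\psi(x)$ via $\psi^*$ and a standard minimax/duality argument.
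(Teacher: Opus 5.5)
Your proof is correct and takes a genuinely different route from the paper. The paper argues by inclusion: Lemma~\ref{lm:espilon_subdiff_smooth_bound} gives $\partial_\epsilon\psi(x)\subseteq\mathbb{B}(\nabla\psi(x),\sqrt{2L_\psi\epsilon})$, and two balls of equal radius are at Hausdorff distance $\Vert\nabla\psi(x)-\nabla\psi(x')\Vert\le L_\psi\Vert x-x'\Vert$. Inclusion in balls does not pass this distance down to the subsets. So the paper's argument only concludes $\Delta_H\le L_\psi\Vert x-x'\Vert+\sqrt{2L_\psi\epsilon}$, which is what its proof actually writes and what Table~\ref{tab:summary_approx} records as $(L_\psi,\sqrt{2L_\psi\epsilon})$. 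It does not reach the inequality as stated. You instead work with support functions, using $\sigma_{\partial_\epsilon\psi(x)}(d)=\inf_{t>0}(\psi(x+td)-\psi(x)+\epsilon)/t$. For each fixed $t>0$ and unit $d$, the map $x\mapsto(\psi(x+td)-\psi(x)+\epsilon)/t$ is $L_\psi$-Lipschitz, so the infimum is $L_\psi$-Lipschitz as well. This gives $\Delta_H\le L_\psi\Vert x-x'\Vert$ with no additive slack. The bound is sharp: for $\psi=\frac{L}{2}\Vert\cdot\Vert^2$ one has $\partial_\epsilon\psi(x)=\mathbb{B}(Lx,\sqrt{2L\epsilon})$, and equality holds. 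The cost of your route is the $\epsilon$-directional derivative formula for the support function, which is valid here because $\psi$ is finite on $\RR^n$ and $\epsilon>0$. The paper's route needs only the elementary ball inclusion. What your route buys is the statement as written, so it supplies the proof this proposition actually needs.
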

\begin{proof}
    Recall that by Lemma \ref{lm:espilon_subdiff_smooth_bound}, we have
    \begin{equation*}
        (\forall x \in \RR^n), \quad \partial_\epsilon \psi(x) \subseteq \mathbb{B}(\nabla \psi(x), \sqrt{2L_\psi\epsilon}).
    \end{equation*}
    Yet, the Hausdorff distance between two balls of identical radius in a normed space is given by
    \begin{equation*}
        (\forall x,x' \in \RR^n), \quad \Delta_H(\mathbb{B}(\nabla \psi(x), \sqrt{2L_\psi\epsilon}), \mathbb{B}(\nabla \psi(x'),\sqrt{2L_\psi\epsilon})) = \Vert \nabla \psi(x) - \nabla \psi(x') \Vert. 
    \end{equation*}
    Hence,
    \begin{equation*}
        (\forall x,x' \in \RR^n), \quad \Delta_H(\partial_\epsilon \psi(x), \partial_\epsilon \psi(y)) \leq L_\psi \Vert x - x' \Vert + \sqrt{2L\psi\epsilon}.
    \end{equation*}
    The $\sqrt{2L\psi\epsilon}$ term accounts for the fact that $\partial_\epsilon \psi(x)$ (and $\partial_\epsilon \psi(y)$) may not be centered around their respective balls.
\end{proof}
The Hausdorff distance is a worst case scenario, hence we can expect that with more assumptions on $\psi$ we could derive a sharper Lipschitz bound.

On the other hand, if $\psi$ is only Lipschitz continuous (not necessarily differentiable), we can still construct a Lipschitz continuous approximation of $\prox_\phi$ with some convex geometry and the notion of Steiner point \cite{fefferman2017sharp,fefferman2018sharp,shvartsman2004barycentric,steiner1881gesammelte}.
\begin{proposition} \label{prop:steiner_point}
    Suppose that $\psi:\RR^n\to\RR$ is a convex Lipschitzian function on $\RR^n
    $. Let $\partial_\epsilon \psi(\RR^n) = \{ \partial_\epsilon \psi(x) \mid \forall x \in \RR^n \}$. Then for all $x \in \mathrm{int} ~\dom \psi$, there exists a Lipschitz continuous selector $s:\partial_\epsilon \psi(\RR^n) \to \RR^n$, the Steiner point. For $K$, a compact convex subset of $\RR^n$, the Steiner map is equal to
    \begin{equation}
        s(K) = n \int_{\mathbb{S}_{n}} u\, \sigma_K(u)\, d\mu(u).
    \end{equation}
Here $\mathbb{S}_n$ is the unit sphere in $\RR^n$, $\sigma_K$ the support function of $K$, and $\mu$ denotes the normalized Lebesgue measure on $\mathbb{S}_n$ (in the canonical basis for instance).

This selector verifies for all $x\in\RR^n$, $s(\partial_\epsilon \psi(x)) \in \partial_\epsilon \psi(x)$ and,
    \begin{equation}
        (\forall x \in \RR^n), (\forall x' \in \RR^n), \quad \Vert s(\partial_\epsilon \psi(x)) - s(\partial_\epsilon\psi(x')) \Vert \leq c_n \frac{K}{\epsilon} \Vert x - x' \Vert,
    \end{equation}
    where $c_n =  2\pi^{-\tfrac{1}{2}}\Gamma\!\left(\tfrac{n}{2} + 1\right)/\Gamma\!\left(\tfrac{n+1}{2}\right)\sim \sqrt{n}$, and $K>0$ comes from \cite[Theorem XI.4.1.3]{hiriart-urruty1993convexII}.
\end{proposition}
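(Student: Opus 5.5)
The plan is to combine two ingredients: the classical Lipschitz property of the Steiner point on nonempty compact convex sets with respect to the Hausdorff distance, and the Hausdorff-Lipschitz estimate for $\epsilon$-subdifferentials, \cite[Theorem XI.4.1.3]{hiriart-urruty1993convexII}, recalled above.

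\textbf{Step 1: the sets are admissible inputs.} Since $\psi$ is convex, finite and Lipschitzian on $\RR^n$, for every $x$ the set $\partial_\epsilon \psi(x)$ is nonempty, closed and convex. It is also bounded: every $s\in\partial_\epsilon\psi(x)$ satisfies $\Vert s\Vert \leq \mathrm{Lip}(\psi) + \epsilon/t$ for any $t>0$. To see this, test the $\epsilon$-subgradient inequality at $y=x+t\,s/\Vert s\Vert$ and let $t\to\infty$, which gives $\Vert s\Vert\le \mathrm{Lip}(\psi)$. So each $\partial_\epsilon\psi(x)$ is a nonempty convex compact set, and the Steiner map $s(\cdot)$ given by the integral formula is well defined on the family $\partial_\epsilon\psi(\RR^n)$.

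\textbf{Step 2: the Steiner point lies in the set.} We use the classical fact that $s(C)\in C$ for every nonempty convex compact $C$. One route is to approximate $C$ by polytopes. For a polytope, $s(C)$ is a convex combination of its vertices, with weights given by the normalized external angles, since $\sigma_C(u)=\langle v(u),u\rangle$ on the normal cone of each vertex $v$. We then pass to the limit using the continuity established in Step 3. Alternatively, we can simply cite Steiner/Shephard. This yields $s(\partial_\epsilon\psi(x))\in\partial_\epsilon\psi(x)$.

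\textbf{Step 3: Lipschitz continuity of the Steiner map.} Linearity of the integral formula in $\sigma_C$ gives
\begin{equation*}
\Vert s(A)-s(B)\Vert = n\Big\Vert \int_{\mathbb{S}_n} u\,(\sigma_A-\sigma_B)(u)\,d\mu(u)\Big\Vert \le n\sup_{\Vert d\Vert=1}\int_{\mathbb{S}_n}|\langle u,d\rangle|\,d\mu(u)\;\Delta_H(A,B).
\end{equation*}
Here we used $|\sigma_A-\sigma_B|\le\Delta_H(A,B)$ on the sphere, which follows from the support-function formula for $\Delta_H$. By rotational invariance, $n\int|u_1|\,d\mu$ evaluates through Beta/Gamma integrals to exactly $c_n=2\pi^{-1/2}\Gamma(\tfrac n2+1)/\Gamma(\tfrac{n+1}2)$. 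Its asymptotics $\sim\sqrt{n}$ (up to the constant $\sqrt{2/\pi}$) follow from Gautschi's inequality.

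\textbf{Step 4: conclusion.} Applying \cite[Theorem XI.4.1.3]{hiriart-urruty1993convexII} with $\epsilon=\epsilon'$ gives $\Delta_H(\partial_\epsilon\psi(x),\partial_\epsilon\psi(x'))\le \frac{K}{\epsilon}\Vert x-x'\Vert$. Composing this with Step 3 gives the claimed bound $c_n\frac{K}{\epsilon}\Vert x-x'\Vert$.

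The main obstacle is Step 2, the membership $s(C)\in C$, which is not immediate from the integral formula. My first attempt would be the polytope-plus-continuity argument above, falling back on citing the classical Steiner point literature \cite{shvartsman2004barycentric}. The explicit evaluation of $c_n$ in Step 3 is routine.
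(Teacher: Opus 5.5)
Your proposal is correct and follows the same route as the paper. The paper simply cites that $\partial_\epsilon\psi(x)$ is bounded, closed and convex (\cite[Theorem XI.1.1.4]{hiriart-urruty1993convexII}) and that the Steiner point is $c_n$-Lipschitz in the Hausdorff metric (\cite{shvartsman2004barycentric}), then composes with \cite[Theorem XI.4.1.3]{hiriart-urruty1993convexII} at $\epsilon=\epsilon'$; you make these ingredients explicit, your computation $c_n=n\int_{\mathbb{S}_n}|u_1|\,d\mu(u)$ is right, and you also address the membership $s(C)\in C$, which the paper leaves implicit.

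One caveat on Step 2: substituting $\sigma_C(u)=\langle v,u\rangle$ on the normal cone $N_v$ of each vertex $v$ into the integral formula only yields matrix weights $n\int_{N_v\cap\mathbb{S}_n}uu^\top\,d\mu(u)$, not scalar external angles. To get the convex combination you need the extra identity $n\int_{\mathbb{S}_n}u\,\sigma_C(u)\,d\mu(u)=\int_{\mathbb{S}_n}\nabla\sigma_C(u)\,d\mu(u)$, which follows from the divergence theorem on the unit ball and the $0$-homogeneity of $\nabla\sigma_C$. Since $\nabla\sigma_C(u)\in C$ for almost every $u$, this identity in fact gives $s(C)\in C$ directly for every compact convex $C$, with no polytope approximation needed.
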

\begin{proof}
    By \cite[Theorem XI.1.1.4]{hiriart-urruty1993convexII}, for all $\epsilon \geq 0$, for all $x \in \mathrm{int}~\dom \psi$, $\partial_\epsilon \psi(x)$ is a bounded closed convex set. The Steiner map is Lipschitz continuous \cite{shvartsman2004barycentric} in the Hausdorff metric with Lipschitz constant $c_n$. Hence,
    \begin{equation*}
        (\forall x \in \RR^n), (\forall x' \in \RR^n), \quad \Vert s(\partial_\epsilon \psi(x)) - s(\partial_\epsilon\psi(x')) \Vert \leq c_n \Delta_H(\partial_\epsilon \psi(x),\partial_\epsilon \psi(x')).
    \end{equation*}
    We conclude by invoking \cite[Theorem XI.4.1.3]{hiriart-urruty1993convexII}.
\end{proof}
If $\psi$ is L-smooth, the Steiner point map coincides with the gradient of $\psi$: invoke additivity with respect to Minkowski addition to write:
\begin{equation}
   (\forall x \in \RR^n), \quad s(\partial_\epsilon \psi(x)) = s(\{\nabla \psi(x) \}) + s\left(\mathbb{B}(0,\sqrt{2L_\psi\epsilon})\right) = \nabla \psi(x) + 0 = \nabla \psi(x).
\end{equation}
Unfortunately, no upper bound depending on $\epsilon$ exists, in general, on the distance between $\prox_\phi$ and $s(\partial_\epsilon \psi(\cdot))$, as a consequence of Br{\o}ndsted--Rockafellar theorem \cite[Theorem XI.4.2.1]{hiriart-urruty1993convexII}.
\paragraph{Admissibility of $\g_e$.} As the $\epsilon$-subdifferential grows around $\nabla \psi$, we can construct a sequence of $\g_e^\epsilon$ such that  
\begin{equation}
    \bigcap_{\epsilon \geq 0} \Fix \g^\epsilon_e = \Fix \prox_\phi.
\end{equation}
Such sequence can be constructed with the Steiner point argument. However, in constrast with approximations of type (c) it is not straightforward to conclude that every approximations of type (e) has fixed points. 
\subsection{Approximation of type (f)} This last approximation is more suited to zeroth-order optimization, since it can be computed only using evaluations of $\phi$. As a result the quality of the approximation is significantly worse than other approximation methods, but it is the price to pay to do zeroth order optimization \cite{liu2020primer}.
\begin{definition}{\textbf{Type (f) approximation.}} \label{def:approx_typef}
    We say that $\g_f$ is an $\epsilon$-type (f) approximation of $\prox_\phi$ if for all $y \in \Hi$
    \begin{equation}
        \g_f(y) = y - \lambda \nabla u^\epsilon(y).
    \end{equation}
    where $\lambda>0$, $u^\epsilon$ is $\sqrt{\epsilon}$ close to $u$, the Moreau envelope of $\phi$ of parameter $\lambda$ as the solution of \begin{equation}
    \left\{ \begin{array}{cc}
    \frac{\partial }{\partial \lambda}u^\epsilon + \frac{1}{2} \Vert \nabla_x u^\epsilon \Vert^2 = \frac{\epsilon}{2}\Delta_x u^\epsilon & \text{in } \Hi \times (0,\Lambda] \\
            u^\epsilon(x,0) = \phi(x) & \text{on } \Hi \times \{0\}.
    \end{array}\right. \label{eq:HJ_viscous}
\end{equation}
\end{definition}
If we do not construct $u^\epsilon$ as the solution of this equation \eqref{eq:HJ_viscous} and only enforce $\epsilon$-closeness to $u$, then we fall under type (d) approximation.%

This construction was proposed in \cite{osher2023hamilton}, where the authors exploit the fact that the Moreau envelope is a viscosity solution to a Hamilton-Jacobi equation in order to derive an approximation method of the proximal operator. For completeness of the presentation, we prove this statement here using only elements of convex analysis. %
\begin{proposition}
    Let $\phi\in\Gamma_0(\Hi)$. Let $\lambda>0$ and note $u$ the Moreau envelope of $\phi$, $\prox_{\lambda \phi}$ its proximal operator, defined by
    \begin{equation}
        (\forall x \in \Hi), \quad \prox_{\lambda \phi}(x) \overset{\triangle}{=} \argmin_{z \in \Hi} \Phi(x,\lambda,z) = \phi(z) + \frac{1}{2\lambda} \Vert z-x \Vert^2, \quad u(x,\lambda) \overset{\triangle}{=} \min_{z \in \Hi} \Phi(x,\lambda,z).
    \end{equation}
    Then, let $\Lambda>0$, for all $\lambda\in [0,\Lambda]$, $u$ is a viscosity solution to the Hamilton-Jacobi equation (Burgers' equation)
    \begin{equation}
        \left\{ \begin{array}{cc}
            \frac{\partial}{\partial \lambda}u(x,\lambda) + \frac{1}{2} \Vert \nabla_x u(x,\lambda) \Vert^2 = 0 & \text{in } \Hi \times (0,\Lambda] \\
            u(x,0) = \phi(x) & \text{on } \Hi \times \{0\} 
        \end{array}\right.
    \end{equation}
\end{proposition}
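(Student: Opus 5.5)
The plan is to show directly that $u$ is a viscosity solution, using only convex-analytic facts about the Moreau envelope: joint convexity in $(x,\lambda)$, semiconcavity, and the Hopf--Lax semigroup structure.

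\textbf{Step 1: regularity of $u$.} First I fix $\lambda>0$. Since $\phi\in\Gamma_0(\Hi)$, \cite[Proposition 12.30]{bauschke2017} gives that $u(\cdot,\lambda)$ is differentiable, with
\begin{equation*}
\nabla_x u(x,\lambda)=\frac{x-\prox_{\lambda\phi}(x)}{\lambda},
\end{equation*}
and that this gradient is $1/\lambda$-Lipschitz. Next I would show that $u$ is jointly convex on $\Hi\times(0,\Lambda]$. The map $(x,\lambda,z)\mapsto \frac{\Vert z-x\Vert^2}{2\lambda}$ is a perspective function of the squared norm, hence jointly convex, so $\Phi$ is jointly convex in $(x,\lambda,z)$. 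Partial minimization in $z$ preserves convexity. Hence $u$ is convex and locally Lipschitz on $\Hi\times(0,\Lambda]$, and so differentiable almost everywhere.

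\textbf{Step 2: computing $\partial_\lambda u$.} I would use a Danskin-type argument. For $h>0$, plugging the minimizer $z^*=\prox_{\lambda\phi}(x)$ into $\Phi(x,\lambda+h,\cdot)$ gives
\begin{equation*}
u(x,\lambda+h)-u(x,\lambda)\le \Vert z^*-x\Vert^2\left(\frac{1}{2(\lambda+h)}-\frac{1}{2\lambda}\right).
\end{equation*}
Using the minimizer at $\lambda+h$ gives the reverse inequality. Continuity of $\lambda\mapsto\prox_{\lambda\phi}(x)$ then yields
\begin{equation*}
\partial_\lambda u(x,\lambda)=-\frac{\Vert x-\prox_{\lambda\phi}(x)\Vert^2}{2\lambda^2}=-\frac12\Vert\nabla_x u(x,\lambda)\Vert^2 .
\end{equation*}
So $u$ is in fact $C^1$ on $\Hi\times(0,\Lambda]$ and satisfies the PDE classically there. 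Any classical $C^1$ solution is a viscosity solution: if a smooth test function $\varphi$ touches $u$ from above or below at a point, then $\nabla\varphi=\nabla u$ there, and both viscosity inequalities hold with equality. The initial condition holds because $u(x,\lambda)\uparrow\phi(x)$ as $\lambda\downarrow0$ \cite[Proposition 12.33]{bauschke2017}.

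\textbf{Main obstacle.} The delicate step is the continuity of $\lambda\mapsto\prox_{\lambda\phi}(x)$, together with joint continuity in $(x,\lambda)$, which is what upgrades the one-sided difference quotients to a genuine $C^1$ statement. My first approach would use the resolvent identity
\begin{equation*}
\prox_{\mu\phi}(x)=\prox_{\lambda\phi}\left(\tfrac{\lambda}{\mu}x+\left(1-\tfrac{\lambda}{\mu}\right)\prox_{\mu\phi}(x)\right)
\end{equation*}
\cite[Proposition 23.31]{bauschke2017}, combined with nonexpansiveness. This gives a local Lipschitz bound in $\lambda$ for $\lambda$ bounded away from $0$.

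If this route proves awkward, I would fall back on convexity. For a convex function, a directional derivative that is two-sided in every direction implies differentiability. Convex differentiable functions are $C^1$ \cite{VarAnalRockafellar}, which settles the regularity on $(0,\Lambda]$.

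The closed endpoint $\lambda=0$ only enters through the initial condition, which is handled by monotone convergence as in Step 2. The statement's phrase "for all $\lambda\in[0,\Lambda]$" should be read with the PDE holding on $(0,\Lambda]$.
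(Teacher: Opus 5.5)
Your proposal is correct and follows the same route as the paper: the paper also obtains $\frac{\partial}{\partial\lambda}u(x,\lambda)=-\frac{1}{2\lambda^2}\Vert x-\prox_{\lambda\phi}(x)\Vert^2$ by Danskin's theorem, combines it with $\nabla_x u(x,\lambda)=\frac{1}{\lambda}(x-\prox_{\lambda\phi}(x))$ from \cite[Proposition 12.30]{bauschke2017}, and handles $\lambda=0$ through \cite[Proposition 12.33]{bauschke2017}. Your two-sided difference-quotient argument (continuity of $\lambda\mapsto\prox_{\lambda\phi}(x)$ via the resolvent identity, or convexity of $u$) and your remark that a classical $C^1$ solution is a viscosity solution supply details that the paper leaves to citation or omits.
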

\begin{proof}
    Denote by $u_x(\lambda)\overset{\triangle}{=} u(x,\lambda)$ and $\Phi_x \overset{\triangle}{=} \Phi(x,\lambda,z)$ for a fixed $x$. Set $\lambda>0$. The Moreau envelope is everywhere differentiable with respect to $t$ and by Danskin's theorem \cite[Section 6.7]{bertsekas2009convex}, we have
    \begin{equation*}
        \frac{\partial}{\partial \lambda}(u_x)(\lambda) = \frac{\partial}{\partial \lambda} (\Phi_x)(\lambda,\prox_{\lambda \phi}(x)).
    \end{equation*}
    Hence,
    \begin{equation*}
        \frac{\partial}{\partial \lambda}(u_x)(\lambda) = \frac{-1}{2 \lambda^2} \Vert \prox_{\lambda \phi}(x) - x \Vert^2.
    \end{equation*}
    By \cite[Proposition 12.30]{bauschke2017}, we have
    \begin{equation*}
        \nabla_x u(x,\lambda) = \frac{1}{\lambda}(x - \prox_{\lambda \phi}(x)).
    \end{equation*}
    We have thus for all $\lambda>0$, $x\in \Hi$ 
    \begin{equation*}
        \frac{\partial }{\partial \lambda}u(x,\lambda) + \frac{1}{2} \Vert \nabla_x u(x,\lambda) \Vert^2 = 0.
    \end{equation*}
    We conclude in $\lambda=0$ by \cite[Proposition 12.33]{bauschke2017}.
\end{proof}
Solutions to this equation exist as soon as $\phi$ is continuous \cite{alvarez1999hopf}.
The associated viscous Hamilton-Jacobi equation for $\epsilon>0$ is equation \eqref{eq:HJ_viscous}. The solution to \eqref{eq:HJ_viscous} $u^\epsilon$ goes to $u$ uniformly as $\epsilon$ goes to $0^+$ \cite[Theorem 5.1]{crandall1984two}:  if $0 < \epsilon,\Lambda <+\infty$ and if $\phi$ is bounded and Lipschitz continuous (hence the restriction to compact convex sets $K$), there exists $C>0$ such that
\begin{equation*}
    \sup_{\lambda\in [0,\Lambda]} \sup_{x\in K} | u(x,\lambda) - u^\epsilon(x,\lambda) | \leq C \sqrt{\epsilon}.
\end{equation*}
Then, the authors of \cite{osher2023hamilton} show using the Cole-Hopf change of variable \cite[Section 4.4.1]{evans2022partial}, that 
\begin{equation}
    \nabla_x u^\epsilon(x,\lambda) = \frac{1}{\lambda}\left( x - \frac{\mathbb{E}_{y \sim \mathcal{N}(x,\epsilon \lambda)}[y \exp(-\phi(y)/\epsilon)]}{\mathbb{E}_{y \sim \mathcal{N}(x,\epsilon \lambda)}[\exp(-\phi(y)/\epsilon)]}\right)
\end{equation}
which yields a zeroth order approximation of $\prox_{\lambda \phi}$ as \cite[Theorem 1]{osher2023hamilton}
\begin{equation*}
    \prox_{\lambda \phi}(x) = \lim_{\epsilon \to 0^+} \frac{\mathbb{E}_{y \sim \mathcal{N}(x,\epsilon \lambda)}[y \exp(-\phi(y)/\epsilon)]}{\mathbb{E}_{y \sim \mathcal{N}(x,\epsilon \lambda)}[\exp(-\phi(y)/\epsilon)]}.
\end{equation*}
The expectations are approximated using Monte-Carlo sampling. A similar derivation can be found in \cite{tibshirani2025laplace}, which generalizes this approach to infimal convolutions with other functions than $\frac{1}{2} \Vert \cdot \Vert^2$. 

\paragraph{Quality of the approximation.} In \cite{di2025monte}, the authors show a convergence rate to the true value of the proximal operator in the convex case \cite[Theorem 2]{di2025monte}\footnote{In the version of the paper we read, the authors write that $\phi$ is only l.s.c., but their proof clearly invokes the convexity of $\phi$.}. Two simple modifications of their argument yield an improved convergence rate in the convex case (by a factor $\sqrt{2}$) and a convergence rate in the weakly convex case:
\begin{proposition} \label{prop:sigma_f}
    Let $\phi:\Hi \to \RR$ be a proper, l.s.c., $\rho$-weakly convex such that $\lambda^{-1}-\rho>0$. Let $\g_f$ be an $\epsilon$-approximation of type $f$ of $\prox_\phi$. Then,
\begin{equation}
    \sup_{x \in \Hi} \Vert \prox_{\lambda \phi}(x) - \g_f(x)\Vert \leq \sqrt{N (\lambda^{-1} - \rho)^{-1} \epsilon}
\end{equation}
\end{proposition}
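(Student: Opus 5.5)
We use the Cole–Hopf representation recalled above. By that formula,
\[
\g_f(x) = x - \lambda \nabla_x u^\epsilon(x,\lambda) = \mathbb{E}_{\pi_x}[y], \qquad \pi_x(y) \propto \exp\!\Big(-\tfrac{1}{\epsilon}\big(\phi(y) + \tfrac{1}{2\lambda}\Vert y-x\Vert^2\big)\Big),
\]
since the product of the Gaussian density of $\mathcal{N}(x,\epsilon\lambda I)$ with $\exp(-\phi/\epsilon)$ is exactly this Gibbs measure. Thus $\g_f(x)$ is the mean of a probability measure whose potential is $\Phi_x(y)/\epsilon$, where $\Phi_x(y) = \phi(y) + \frac{1}{2\lambda}\Vert y - x\Vert^2$. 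The quantity to bound is therefore $\Vert \mathbb{E}_{\pi_x}[y] - y^\star\Vert$ with $y^\star = \prox_{\lambda\phi}(x) = \argmin \Phi_x$.

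The first step is strong convexity. Since $\phi$ is $\rho$-weakly convex and $\lambda^{-1} > \rho$, the function $\Phi_x$ is $\mu$-strongly convex with $\mu = \lambda^{-1} - \rho$. This is the only place where the weak convexity modification of the argument of \cite[Theorem 2]{di2025monte} enters: the strong convexity modulus becomes $\lambda^{-1}-\rho$ in place of $\lambda^{-1}$. It follows that $\pi_x$ is $(\mu/\epsilon)$-strongly log-concave, and $y^\star$ is its mode.

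The second step bounds the mean-to-mode distance. By Jensen, $\Vert \mathbb{E}_{\pi}[y] - y^\star\Vert^2 \le \mathbb{E}_\pi \Vert y - y^\star\Vert^2$, so it suffices to bound this second moment by $N\epsilon/\mu$. The route is integration by parts, which avoids any differentiability of $\phi$ by working with the convex part.

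\begin{enumerate}
\item Write $V = \Phi_x/\epsilon$ and apply the identity $\mathbb{E}_\pi[\langle \nabla V(y), y - y^\star\rangle] = \mathbb{E}_\pi[\mathrm{div}(y-y^\star)] = N$. This holds for locally Lipschitz convex-plus-quadratic $V$ with subgradients, or after Moreau smoothing of $\phi$ followed by a limit argument.
\item Strong convexity together with $0 \in \partial V(y^\star)$ gives $\langle \nabla V(y), y-y^\star\rangle \ge \frac{\mu}{\epsilon}\Vert y-y^\star\Vert^2$.
\item Combining the two, $\mathbb{E}_\pi\Vert y-y^\star\Vert^2 \le N\epsilon/\mu$, which is the claimed bound $\sqrt{N(\lambda^{-1}-\rho)^{-1}\epsilon}$.
\end{enumerate}

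The direct second-moment bound via this divergence identity should be what recovers the $\sqrt2$ improvement over \cite{di2025monte}. Their argument likely passes through a cruder inequality such as $\mathbb{E}[\Phi_x - \Phi_x(y^\star)] \le N\epsilon/2$ combined with quadratic growth, which would give $\sqrt{2N\epsilon/\mu}$.

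The main obstacle is justifying the integration by parts when $\phi$ is only l.s.c. and weakly convex. The boundary terms must vanish, which follows from the Gaussian tail of $\pi_x$ guaranteed by strong convexity. The non-smooth points of $\phi$ must also be handled.

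I would first try to replace $\phi$ by its Moreau envelope $\phi_\eta$. This preserves $\rho$-weak convexity for small $\eta$, more precisely with modulus $\rho/(1-\eta\rho)$, which tends to $\rho$. It also makes $V$ of class $C^1$. I would then prove the bound for $\phi_\eta$ and pass to the limit $\eta\to0$. The limit uses monotone convergence of $\exp(-\phi_\eta/\epsilon)$ to $\exp(-\phi/\epsilon)$ and the convergence $\prox_{\lambda\phi_\eta}(x)\to\prox_{\lambda\phi}(x)$. Both means converge, by dominated convergence under the uniform Gaussian tails.
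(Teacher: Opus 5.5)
Your proposal is correct and follows essentially the same route as the paper. The paper also uses Jensen to reduce to $\mathbb{E}\Vert W\Vert^2$ with $W = y - \prox_{\lambda\phi}(x)$ under the density $\propto \exp(-f(w)/\epsilon)$, where $f(w) := \Phi_\lambda(\prox_{\lambda\phi}(x)+w)-\Phi_\lambda(\prox_{\lambda\phi}(x))$; it then applies strong monotonicity with modulus $\lambda^{-1}-\rho$ and the integration-by-parts identity $\mathbb{E}\langle W,\nabla f(W)\rangle = N\epsilon$, importing both the Jensen step and this identity from \cite{di2025monte}. Your Moreau-smoothing limit is a more careful justification of a step the paper covers only by noting that $f$ is almost everywhere differentiable (and since $\phi$ is real-valued and weakly convex, hence locally Lipschitz, the integration by parts could even be done directly).
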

\begin{proof}
    Denote $z^*(x) = \prox_{\lambda \phi}(x)$. From \cite[Appendix A]{di2025monte}, the approximation error can be bounded by
    \begin{equation*}
        \Vert z^*(x) - \g_f(x)\Vert \leq \sqrt{\mathbb{E}_{p_\epsilon}(\Vert W\Vert^2)},
    \end{equation*}
    where $W$ is random variable following the density 
    \begin{equation*} 
        p_\epsilon(w) \varpropto \exp\left(-\frac{f(w)}{\epsilon}\right) = \exp\left(-\frac{\Phi_{\lambda}(z^*(x) + w)+\Phi_\lambda(z^*(x))}{\epsilon}\right).
    \end{equation*}
    Now, as $\phi$ is $\rho-$weakly convex, $f$ is $(\lambda^{-1} - \rho)$-strongly convex, therefore $\partial f$ is $(\lambda^{-1} - \rho)$-strongly monotone \cite[Example 22.4(iv)]{bauschke2017}, i.e.,
    \begin{equation}
    \left(\forall (w,s_w) \in \mathrm{gra}\,\partial f\right) \; \left(\forall (v,s_v) \in \mathrm{gra}\,\partial f \right), 
\quad \langle w - v, s_w -  s_v \rangle \geq (\lambda^{-1}-\rho) \Vert w-v \Vert^2.
\end{equation}
If $f$ vanishes at $0$, and is minimized at $0$, then 
\begin{equation*}
    \forall w \in \Hi, \forall s_w \in \partial f(w), \quad \langle w,s_w \rangle \geq (\lambda^{-1}-\rho) \Vert w \Vert^2.
\end{equation*} 
$f$ is convex, therefore almost everywhere differentiable, yielding that $s_w$ is almost everywhere unique and equal to $\nabla f(w)$, thus
\begin{equation*}
    \mathbb{E}_{p_\epsilon}(\Vert W \Vert^2) \leq (\lambda^{-1}-\rho)^{-1} \mathbb{E}_{p_\epsilon}(\langle W, \nabla f (W) \rangle).
\end{equation*}
The rest of the computations are identical to those in \cite[Appendix A]{di2025monte}, and we have 
\begin{equation*}
    \mathbb{E}_{p_\epsilon}(\langle W, \nabla f (W) \rangle) = N \epsilon,
\end{equation*}
which concludes the proof.
\end{proof}
\paragraph{Lipschitz continuity of $\g_f$.} Moreover, we can obtain an upper bound on the Lipschitz constant of $\nabla_x u^\epsilon (\cdot,\lambda)$ with the Cole-Hopf change of variable $f= \exp(-\frac{u^\epsilon}{\epsilon})$.
\begin{proposition} \label{prop:lipschitz_f}
    Let $\epsilon>0$ and let $u^\epsilon$ be a solution to the viscous HJ equation \eqref{eq:HJ_viscous}. Assume now that $\inf u^\epsilon>-\infty$. Then for all $\lambda>0$,
    \begin{equation}
    (\forall x \in \Hi), \quad D^2_x(u^\epsilon)(x,\lambda) \preceq  \frac{1}{\lambda}.
    \end{equation}
\end{proposition}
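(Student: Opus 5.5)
The plan is to linearize the viscous Hamilton--Jacobi equation \eqref{eq:HJ_viscous} with the Cole--Hopf transform and read the Hessian of $u^\epsilon$ off an explicit convolution formula. Set $w(x,\lambda) := \exp(-u^\epsilon(x,\lambda)/\epsilon)$. Since $\inf u^\epsilon > -\infty$, $w$ is bounded, and a direct computation shows that $w$ solves the heat equation $\partial_\lambda w = \frac{\epsilon}{2}\Delta_x w$ with initial datum $w(\cdot,0) = \exp(-\phi/\epsilon)$. By uniqueness of bounded solutions of the heat equation, we get the representation
\begin{equation*}
    w(x,\lambda) = (2\pi\epsilon\lambda)^{-N/2}\int_{\Hi} \exp\Big(-\frac{\phi(y)}{\epsilon} - \frac{\Vert x-y\Vert^2}{2\epsilon\lambda}\Big)\,dy ,
\end{equation*}
and therefore $u^\epsilon = -\epsilon \log w$.

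Next, we write $u^\epsilon(x,\lambda) = \frac{\Vert x\Vert^2}{2\lambda} - \epsilon \log \int \exp\big(\frac{\langle x,y\rangle}{\epsilon\lambda} - \frac{\phi(y)}{\epsilon} - \frac{\Vert y\Vert^2}{2\epsilon\lambda}\big)dy + c(\lambda)$, expanding the square in the Gaussian kernel. The second term is $-\epsilon\, F(x/(\epsilon\lambda))$, where $F$ is the log-partition function (cumulant generating function) of the probability measure $\mu(dy) \propto \exp(-\phi(y)/\epsilon - \Vert y\Vert^2/(2\epsilon\lambda))\,dy$. A log-partition function is convex, since its Hessian is the covariance of the corresponding tilted measure. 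Differentiating under the integral sign, which is legitimate by the Gaussian decay and the boundedness of $w$, gives
\begin{equation*}
    D_x^2 u^\epsilon(x,\lambda) = \frac{1}{\lambda}\Id - \frac{1}{\epsilon\lambda^2}\,\mathrm{Cov}_{\mu_x}(Y) ,
\end{equation*}
where $\mu_x$ is the Gibbs measure with density proportional to $\exp\big(-(\phi(y) + \frac{1}{2\lambda}\Vert x-y\Vert^2)/\epsilon\big)$. This is exactly the measure whose mean gives $\g_f$ in the formula of \cite{osher2023hamilton}. Since a covariance matrix is positive semidefinite, $D^2_x u^\epsilon \preceq \frac{1}{\lambda}\Id$, which is the claim.

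The main obstacle is justifying the computation rather than the algebra. First, we must identify the given solution $u^\epsilon$ with the Cole--Hopf representation; this requires uniqueness within a suitable class, and the hypothesis $\inf u^\epsilon > -\infty$, which makes $w$ bounded, is precisely what selects the bounded heat solution. Second, we must differentiate twice under the integral and ensure that the second moments of $\mu_x$ are finite. Both follow from the Gaussian factor as long as $\exp(-\phi/\epsilon)$ is locally integrable and not identically zero, which holds since $\phi$ is proper. As a by-product, the bound gives that $\Id - \lambda D_x^2 u^\epsilon = \frac{1}{\epsilon\lambda}\mathrm{Cov}_{\mu_x}(Y) \succeq 0$, i.e., $D\g_f$ is positive semidefinite. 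This monotonicity is what feeds into the Lipschitz entry for type (f) in Table \ref{tab:summary_approx}.
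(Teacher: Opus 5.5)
Your proof is correct, but it takes a different route from the paper's. Both arguments start from the Cole--Hopf transform $w=\exp(-u^\epsilon/\epsilon)$, which is a positive bounded solution of $\partial_\lambda w=\frac{\epsilon}{2}\Delta_x w$. From there the paper simply cites a matrix (Hamilton-type) Harnack estimate \cite[Theorem 1.1]{helmensdorfer2013geometry}, $D_x^2\log w+\frac{1}{\epsilon\lambda}\Id\succeq 0$, and multiplies by $-\epsilon$. You instead write $w$ as a heat-kernel convolution and compute the Hessian exactly, obtaining $\frac{1}{\lambda}\Id-D_x^2u^\epsilon=\frac{1}{\epsilon\lambda^2}\mathrm{Cov}_{\mu_x}(Y)$, which is in effect a proof of that Harnack estimate on $\RR^N$.

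Your route gives a self-contained argument and an identity rather than an inequality. The defect is the covariance of the same Gibbs measure whose mean defines $\g_f$. That is the natural starting point for the two-sided bound a Lipschitz estimate on $\g_f$ actually needs. Positive semidefiniteness of $D\g_f$, as in your closing remark, gives no Lipschitz constant by itself. A Brascamp--Lieb bound $\mathrm{Cov}_{\mu_x}(Y)\preceq\epsilon(\lambda^{-1}-\rho)^{-1}\Id$ for $\rho$-weakly convex $\phi$ would give $D\g_f\preceq(1-\lambda\rho)^{-1}\Id$.

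The paper's route is more robust at the identification step. The cited estimate applies to any positive bounded solution, with no need to specify in what sense $w(\cdot,\lambda)\to\exp(-\phi/\epsilon)$. Your appeal to uniqueness of bounded solutions implicitly requires that. You can remove this dependence by invoking Widder's representation theorem: every positive solution of the heat equation on $\RR^N\times(0,T)$ is the heat-kernel convolution of a nonnegative Radon measure. Your covariance computation then runs verbatim with that measure in place of $\exp(-\phi/\epsilon)\,dy$. Differentiation under the integral is justified because finiteness of $w(x,\lambda)$ for every $x$ makes the Laplace transform of the Gaussian-weighted measure finite on all of $\RR^N$. Moreover, only $w>0$ is used, not $\inf u^\epsilon>-\infty$.

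This also repairs a small slip. Properness of $\phi$ does not ensure that $\exp(-\phi/\epsilon)$ is not a.e.\ zero (take $\dom\phi$ Lebesgue-null). What guarantees a nontrivial measure is the finiteness of $u^\epsilon$.
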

\begin{proof}
    The Cole-Hopf change of variable $f= \exp(-\frac{u^\epsilon}{\epsilon})$ solves the heat equation on $(0,T]$
    \begin{equation*}
    \frac{\partial}{\partial \lambda} f(x,\lambda) = \frac{\epsilon}{2} \Delta_x f(x,\lambda) \quad \text{in } \Hi \times (0,T].
    \end{equation*}
    Indeed,
    \begin{equation*}
        \frac{\partial}{\partial \lambda} f(x,\lambda) = -\frac{1}{\epsilon} f \frac{\partial}{\partial \lambda} u^\epsilon(x,\lambda).
    \end{equation*}
    On the other hand, recall that $\Delta_x f(x,\lambda)=\mathrm{tr}(D^2_x f(x,\lambda))$. We have
    \begin{align*}
        D^2_x f & = \nabla_x \left(-\frac{1}{\epsilon} f \nabla_x u^\epsilon \right) \\
        & = -\frac{1}{\epsilon}\left[ \nabla_x f \otimes \nabla_x u^\epsilon + f D^2_x u^\epsilon \right] \\
        & = -\frac{1}{\epsilon}\left[ -\frac{1}{\epsilon} f (\nabla_x u^\epsilon \otimes \nabla_x u^\epsilon) + f D^2_x u^\epsilon\right].
    \end{align*} 
    Hence,
    \begin{equation*}
        \Delta_x f = -\frac{1}{\epsilon} \left[-\frac{1}{\epsilon} f \Vert \nabla_x u^\epsilon \Vert^2 + f \Delta_x u^\epsilon \right].
    \end{equation*}
    Now,
    \begin{align*}
        \frac{\partial}{\partial \lambda} f - \frac{\epsilon}{2}\Delta_x f & = -\frac{1}{\epsilon} f \frac{\partial}{\partial \lambda} u^\epsilon + \frac{1}{2} \left[-\frac{1}{\epsilon} f \Vert \nabla_x u^\epsilon \Vert^2 + f \Delta_x u^\epsilon \right] \\
        & = -\frac{1}{\epsilon} f \left[ \frac{\partial}{\partial \lambda} u^\epsilon + \frac{1}{2} \Vert \nabla_x u^\epsilon \Vert^2 - \frac{\epsilon}{2} \Delta_x u^\epsilon\right] \\
        & = 0.
    \end{align*}
    Any positive, bounded solution to this equation satisfies  \cite[Theorem 1.1]{helmensdorfer2013geometry}
\begin{equation*}
    D^2_x (\log f)(x,\lambda) + \frac{1}{\epsilon \lambda} \Id \succeq  0 \quad \text{for all }\lambda>0,
\end{equation*}
hence,
\begin{equation*}
    D^2_x(u^\epsilon)(x,\lambda) \preceq  \frac{1}{\lambda}.
\end{equation*}
\end{proof}
This result highlights the Lipschitz constant of the gradient of the approximation does not depend on the value of $\epsilon$. Hence, if $u^\epsilon$ is convex, we obtain that $\g_f$ is nonexpansive.
\begin{proposition}
    Let $\phi$ be a proper, l.s.c., convex function. Set $\lambda>0$ and $\tau\leq2/\lambda$. Then $u^{\epsilon}$ is convex and,
    \begin{equation}
        (\forall x,y \in \RR^N), \quad \Vert \g_f(x) - \g_f(y) \Vert \leq \Vert x - y\Vert.
    \end{equation}
\end{proposition}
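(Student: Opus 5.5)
The plan is to show two things: that $u^\epsilon(\cdot,\lambda)$ is convex, and that its Hessian is bounded above by $\frac{1}{\lambda}\Id$ (Proposition \ref{prop:lipschitz_f}). Then $\nabla_x u^\epsilon$ is $\frac1\lambda$-Lipschitz and monotone. By Baillon--Haddad it is $\lambda$-cocoercive, so $\lambda\nabla_x u^\epsilon$ is firmly nonexpansive. Finally $\g_f = \Id - \lambda\nabla_x u^\epsilon$ is the complement of a firmly nonexpansive map, hence firmly nonexpansive, hence nonexpansive. (The parameter $\tau\le 2/\lambda$ in the statement plays the role of a step $\g_f=\Id-\tau\nabla u^\epsilon$. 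With $\tau\in(0,2/\lambda]$, the map $\Id-\tau\nabla u^\epsilon$ is nonexpansive by the standard gradient-step argument: expand $\Vert x-y-\tau(\nabla u^\epsilon(x)-\nabla u^\epsilon(y))\Vert^2$ and use cocoercivity. The case $\tau=\lambda$ is the one in Definition \ref{def:approx_typef}.)

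\textbf{Convexity of $u^\epsilon$.} I would use the Cole--Hopf representation recalled above. It gives
\begin{equation*}
u^\epsilon(x,\lambda) = -\epsilon\log \mathbb{E}_{y\sim\mathcal N(x,\epsilon\lambda)}\bigl[\exp(-\phi(y)/\epsilon)\bigr] = -\epsilon \log \int \exp\Bigl(-\tfrac{1}{\epsilon}\bigl(\phi(y)+\tfrac{1}{2\lambda}\Vert x-y\Vert^2\bigr)\Bigr)dy + C.
\end{equation*}
Here $C$ depends only on $\epsilon,\lambda,N$. The integrand $\exp(-\Phi(x,\lambda,y)/\epsilon)$ is log-concave jointly in $(x,y)$, because $(x,y)\mapsto \phi(y)+\frac{1}{2\lambda}\Vert x-y\Vert^2$ is jointly convex when $\phi$ is convex. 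By the Prékopa--Leindler theorem, marginalizing over $y$ preserves log-concavity. So $x\mapsto \int \exp(-\Phi/\epsilon)\,dy$ is log-concave, and $u^\epsilon(\cdot,\lambda)$ is convex. Alternatively, one can write $\exp(-u^\epsilon/\epsilon)$ as the heat semigroup applied to $\exp(-\phi/\epsilon)$, and use the fact that the Gaussian convolution of a log-concave function is log-concave.

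\textbf{Upper Hessian bound and conclusion.} Proposition \ref{prop:lipschitz_f} gives $D_x^2u^\epsilon\preceq \frac1\lambda\Id$. Its hypothesis $\inf u^\epsilon>-\infty$ has to be checked: I would assume $\phi$ is bounded below (or restrict to the setting of Crandall--Lions), and then the positive solution $f=\exp(-u^\epsilon/\epsilon)$ is bounded. Convexity gives $0\preceq D_x^2u^\epsilon\preceq\frac1\lambda\Id$. Therefore, for $\tau\in(0,2/\lambda]$,
\begin{equation*}
-\Id \preceq \Id-\tau D_x^2 u^\epsilon \preceq \Id,
\end{equation*}
so $\Vert D\g_f\Vert_{op}\le1$ everywhere. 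Integrating along segments gives $\Vert\g_f(x)-\g_f(y)\Vert\le\Vert x-y\Vert$, which avoids invoking Baillon--Haddad at all. $u^\epsilon$ is smooth for $\lambda>0$ by parabolic regularity of the heat equation.

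\textbf{Main obstacle.} The main obstacle is the convexity step in the generality of $\phi\in\Gamma_0$ with possibly infinite values. The Cole--Hopf formula and the integrability of $\exp(-\phi/\epsilon)$ against the Gaussian need justification, and so does the identification of the viscous HJ solution with that formula. I would handle this by working with the explicit formula as the definition of $u^\epsilon$ and checking that it solves \eqref{eq:HJ_viscous}. Then Prékopa--Leindler applies directly, provided $\dom\phi$ has nonempty interior so that the integral is positive and finite.
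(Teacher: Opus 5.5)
Your route is essentially the paper's, with two local differences. Convexity comes from log-concavity being preserved under Gaussian convolution; your Pr\'ekopa marginalization is the same fact the paper cites. Where the paper combines the bound $D_x^2u^\epsilon\preceq\frac1\lambda\Id$ of Proposition~\ref{prop:lipschitz_f} with Baillon--Haddad and expands the square, you sandwich the Hessian, $0\preceq D_x^2u^\epsilon\preceq\frac1\lambda\Id$, and integrate along segments. For the map of Definition~\ref{def:approx_typef} ($\tau=\lambda$) this correctly gives $0\preceq\Id-\lambda D_x^2u^\epsilon\preceq\Id$, so $\g_f$ is (firmly) nonexpansive.

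Your caveats are genuine, and the paper skips them. Proposition~\ref{prop:lipschitz_f} needs $\inf u^\epsilon>-\infty$, which fails for instance for affine $\phi$. Positivity of the integral needs $\mathrm{int}\,\dom\phi\neq\emptyset$, while finiteness follows from an affine minorant of $\phi$. You can avoid assuming $\phi$ bounded below by differentiating the Cole--Hopf formula twice: $D_x^2u^\epsilon=\frac1\lambda\Id-\frac{1}{\epsilon\lambda^2}\mathrm{Cov}_{p_x}(Y)$ with $p_x(y)\propto\exp\bigl(-(\phi(y)+\frac{1}{2\lambda}\Vert x-y\Vert^2)/\epsilon\bigr)$. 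This gives the upper bound directly, and Brascamp--Lieb for the $\frac{1}{\epsilon\lambda}$-strongly log-concave $p_x$ gives $\mathrm{Cov}_{p_x}(Y)\preceq\epsilon\lambda\Id$, which is convexity.

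The step that fails is your extension to $\tau\in(0,2/\lambda]$. The eigenvalues of $\Id-\tau D_x^2u^\epsilon$ lie in $[1-\tau/\lambda,1]$, so $-\Id\preceq\Id-\tau D_x^2u^\epsilon$ needs $\tau\le2\lambda$, not $\tau\le 2/\lambda$. Equivalently, the cocoercivity expansion only gives $\Vert x-y\Vert^2-\tau(2\lambda-\tau)\Vert\nabla u^\epsilon(x)-\nabla u^\epsilon(y)\Vert^2$.

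For $\lambda<1$ and $\tau\in(2\lambda,2/\lambda]$ the claim is false. Take $\phi=\frac a2\Vert\cdot\Vert^2$; then $u^\epsilon(\cdot,\lambda)=\frac{a}{2(1+a\lambda)}\Vert\cdot\Vert^2+c$. So $\Id-\tau\nabla u^\epsilon$ has Lipschitz constant $\bigl\vert 1-\frac{\tau a}{1+a\lambda}\bigr\vert$, which tends to $\frac\tau\lambda-1>1$ as $a\to\infty$.

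The slip comes from the statement, and the paper's proof has it too. Its bound by $(1-\tau/\lambda)^2\Vert x-y\Vert^2$ uses $\Vert\nabla u^\epsilon(x)-\nabla u^\epsilon(y)\Vert\le\frac1\lambda\Vert x-y\Vert$ in the wrong direction, because the coefficient $\tau^2-2\tau\lambda$ is negative when $\tau<2\lambda$. State the result for $\tau\in(0,2\lambda]$; this range contains $\tau=\lambda$, the only case the definition of $\g_f$ uses.
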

\begin{proof}
    We first show that $u^{\epsilon}$ is convex. The Cole-Hopf change of variable $f= \exp\left(-\frac{u^{\epsilon}}{\epsilon}\right)$ solves the Heat equation, and can be expressed explicitly from $\phi$ using
    \begin{equation*}
        f(x,\lambda) = G_{\epsilon \lambda} \ast \exp(-\frac{\phi}{\epsilon}),
    \end{equation*}
    where $G_{\epsilon \lambda} := \frac{1}{(2\pi \epsilon \lambda)^{N/2}} \exp\left(-\frac{\Vert \cdot \Vert^2}{2 \epsilon \lambda}\right)$.
    Log-concavity is preserved by convolution \cite[Proposition 3.5]{saumard2014log}, hence $\phi$ being convex implies $f$ being log-concave. Therefore, $u^{\epsilon}$ is convex.
    By Baillon-Haddad theorem, we have
    \begin{equation*}
        \langle \nabla u^{\epsilon}(x) -\nabla u^{\epsilon}(y), x-y \rangle \geq \lambda \Vert \nabla u^{\epsilon}(x) -\nabla u^{\epsilon}(y)\Vert^2,
    \end{equation*}
    Therefore,
    \begin{align*}
        \Vert \g_f(x) - \g_f(y) \Vert^2 & = \Vert (\Id- \tau \nabla u^{\epsilon})(x) - (\Id - \tau \nabla u^{\epsilon})(y) \Vert^2 \\
        & = \Vert x- y\Vert^2 - 2 \tau \langle \nabla u^{\epsilon}(x) - \nabla u^{\epsilon}(y), x-y \rangle + \tau^2 \Vert \nabla u^{\epsilon}(x) - \nabla u^{\epsilon}(y) \Vert^2 \\
        & \leq \Vert x- y\Vert^2 -2\tau \lambda \Vert \nabla u^{\epsilon}(x) - \nabla u^{\epsilon}(y) \Vert^2 + \tau^2 \Vert \nabla u^{\epsilon}(x) - \nabla u^{\epsilon}(y) \Vert^2 \\
        & \leq \left(1-\frac{2\tau}{\lambda} + \frac{\tau^2}{\lambda^2}\right) \Vert x-y \Vert^2 \\
        & = \left(1-\frac{\tau}{\lambda}\right)^2 \Vert x- y \Vert^2\\
        & \leq \Vert x- y \Vert^2.
    \end{align*}
\end{proof}
This proposition cannot be readily extended to weakly convex functions. Indeed, weak convexity is not preserved by convolution. Nonetheless, when $\phi$ is not convex, we still have Lipschitz continuity of $\g_f$.
\begin{proposition}
    Suppose that Assumption \ref{ass:weaklyconvex} holds. Set $\lambda>0$ and $\tau<1/\lambda$. Then,
    \begin{equation}
        (\forall x,y \in \Hi), \quad \Vert \g_f(x) - \g_f(y) \Vert \leq \left(1+\frac{\tau}{\lambda}\right) \Vert x - y \Vert.
    \end{equation}
\end{proposition}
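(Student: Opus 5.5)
The plan is to write $\g_f=\Id-\tau\nabla_x u^\epsilon(\cdot,\lambda)$ and bound the spectrum of the symmetric matrix $\Id-\tau D^2_x u^\epsilon(x,\lambda)$ uniformly in $x$. The mean value inequality along segments then gives $\Vert \g_f(x)-\g_f(y)\Vert\leq \sup_z\Vert \Id-\tau D_x^2u^\epsilon(z,\lambda)\Vert_{op}\Vert x-y\Vert$. So it suffices to show
\begin{equation*}
-\tfrac{1}{\lambda}\Id \preceq D^2_x u^\epsilon(x,\lambda)\preceq \tfrac{1}{\lambda}\Id .
\end{equation*}
Given these two bounds, the eigenvalues of $\Id-\tau D^2_xu^\epsilon$ lie in $[1-\tau/\lambda,\,1+\tau/\lambda]$. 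Since $\tau<1/\lambda$ (so $\tau/\lambda<1$), the lower end is positive and the operator norm is at most $1+\tau/\lambda$, which is the claim.

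The upper bound $D^2_xu^\epsilon\preceq \lambda^{-1}\Id$ is exactly Proposition \ref{prop:lipschitz_f}, so only the lower bound remains. For it I would use the explicit Cole--Hopf representation $f=G_{\epsilon\lambda}\ast \exp(-\phi/\epsilon)$ and differentiate $u^\epsilon=-\epsilon\log f$ twice under the integral. This gives the standard identity
\begin{equation*}
D^2_xu^\epsilon(x,\lambda)=\frac{1}{\lambda}\Big(\Id-\frac{1}{\epsilon\lambda}\mathrm{Cov}_{\pi_x}(Y)\Big),\qquad \pi_x(y)\propto \exp\Big(-\frac{1}{\epsilon}\Big(\phi(y)+\frac{1}{2\lambda}\Vert y-x\Vert^2\Big)\Big).
\end{equation*}
This identity also re-proves the upper bound, since a covariance is positive semidefinite. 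Under Assumption \ref{ass:weaklyconvex} the potential $\phi+\frac{1}{2\lambda}\Vert\cdot-x\Vert^2$ is $(\lambda^{-1}-\rho)$-strongly convex, so $\pi_x$ is strongly log-concave with modulus $(\lambda^{-1}-\rho)/\epsilon$. The Brascamp--Lieb inequality then gives $\mathrm{Cov}_{\pi_x}(Y)\preceq \epsilon(\lambda^{-1}-\rho)^{-1}\Id$. Hence
\begin{equation*}
D^2_xu^\epsilon\succeq \frac{1}{\lambda}\Big(1-\frac{1}{1-\lambda\rho}\Big)\Id=-\frac{\rho}{1-\lambda\rho}\Id .
\end{equation*}
This bound does not depend on $\epsilon$, in line with the remark after Proposition \ref{prop:lipschitz_f}.

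The main obstacle is the last comparison: $-\rho/(1-\lambda\rho)\geq -1/\lambda$ holds only when $\lambda\rho\leq 1/2$. In general the argument yields the constant $\max\{1+\tau/\lambda,\;1+\tau\rho/(1-\lambda\rho)\}$. I would therefore first try to sharpen the covariance bound. One route is to exploit that $\mathrm{Cov}_{\pi_x}$ is also controlled by $\epsilon\lambda\Id$ up to the weak-convexity correction; another is to work at the Moreau parameter normalization $\lambda=1$ with $\rho=1-1/L_\psi$ used in Assumption \ref{ass:weaklyconvex}. If neither closes the gap, the plan is to state the Lipschitz constant as $1+\tau\max\{\lambda^{-1},\rho(1-\lambda\rho)^{-1}\}$, which reduces to $1+\tau/\lambda$ exactly when $\lambda\rho\leq1/2$.

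A secondary technical point is justifying differentiation under the integral in the Cole--Hopf formula. This is routine here: $\phi$ is prox-bounded with $\lambda<\lambda_\phi$ and is lower-bounded by a quadratic, so the Gaussian factor dominates and all moments of $\pi_x$ exist.
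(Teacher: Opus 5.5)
Your proposal does not prove the statement, and it cannot be completed: the obstacle you hit is real. For $\lambda\rho>1/2$ the statement is false, and your Brascamp--Lieb bound is attained. Take $\phi=-\frac{\rho}{2}\Vert\cdot\Vert^2$ with $\rho<1$. It is proper, l.s.c., $\rho$-weakly convex, with prox-bound $1/\rho>1$, so it satisfies Assumption \ref{ass:weaklyconvex}. Then $\pi_x$ is Gaussian with covariance exactly $\epsilon\lambda(1-\lambda\rho)^{-1}\Id$. Completing the square in the Cole--Hopf formula gives $u^\epsilon(x,\lambda)=-\frac{\rho}{2(1-\lambda\rho)}\Vert x\Vert^2+C(\lambda)$, hence $\g_f=\bigl(1+\frac{\tau\rho}{1-\lambda\rho}\bigr)\Id$. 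With $\lambda=1$, $\rho=0.9$, $\tau=1/2$ this map is $5.5$-Lipschitz, whereas the statement claims $1.5$. With $\tau=\lambda<1$, the step of Definition \ref{def:approx_typef}, $\g_f$ coincides with $\prox_{\lambda\phi}=(1-\lambda\rho)^{-1}\Id$. Its constant blows up as $\lambda\rho\uparrow 1$, so no $\rho$-independent bound like $1+\tau/\lambda$ can hold.

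This is not an artifact of $u^\epsilon$ being unbounded below. Take the nonnegative, $\rho$-weakly convex double well $\phi(y)=\frac{c}{4}(y^2-a^2)^2$ on $\RR$ with $ca^2=\rho<1/\lambda$. The potential of $\pi_0$ is strongly convex with unique minimizer $0$ and curvature $\lambda^{-1}-\rho$ there. Laplace's method then gives $\mathrm{Var}_{\pi_0}(Y)/\epsilon\to(\lambda^{-1}-\rho)^{-1}$, hence $(u^\epsilon)''(0,\lambda)\to-\rho/(1-\lambda\rho)$ as $\epsilon\to0$. So neither of your sharpening routes can work. In particular, normalizing $\lambda=1$ only gives $\rho/(1-\rho)=L_\psi-1$, which exceeds $1$ as soon as $L_\psi>2$.

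The paper's proof uses the triangle inequality $\Vert\g_f(x)-\g_f(y)\Vert\leq\Vert x-y\Vert+\tau\Vert\nabla u^\epsilon(x)-\nabla u^\epsilon(y)\Vert$. It then applies a $\lambda^{-1}$-Lipschitz bound on $\nabla u^\epsilon$ attributed to Proposition \ref{prop:lipschitz_f}. That proposition only gives the one-sided bound $D^2_xu^\epsilon\preceq\lambda^{-1}\Id$. This controls the Lipschitz constant of $\nabla u^\epsilon$ only when $u^\epsilon$ is convex, i.e. in the convex-$\phi$ case of the preceding proposition. The missing lower bound is exactly the one you isolated.

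What survives is your fallback. For $\lambda\rho<1$, $\g_f$ is Lipschitz with constant $1+\tau\max\{\lambda^{-1},\rho(1-\lambda\rho)^{-1}\}$. This constant is sharp by the quadratic example and equals the claimed $1+\tau/\lambda$ exactly when $\lambda\rho\leq 1/2$.

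Two small fixes to your write-up. First, $\tau<1/\lambda$ does not imply $\tau/\lambda<1$, but you do not need it, since $|1-\tau/\lambda|\leq 1+\tau/\lambda$ always. Second, strong convexity of the potential of $\pi_x$ requires $\lambda<1/\rho$, which the statement does not impose. For $\lambda\rho>1$, the same double well with $ca^2=\rho>1/\lambda$ makes $\pi_0$ bimodal with variance of order one. Then $(u^\epsilon)''(0,\lambda)\to-\infty$ as $\epsilon\to0$, so no $\epsilon$-uniform Lipschitz bound holds at all.
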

\begin{proof}
    We have 
    \begin{align*}
        \Vert \g_f(x) - \g_f(y) \Vert &= \Vert x- \tau\nabla u^{\epsilon}(x)-y + \tau\nabla u^{\epsilon}(y) \Vert \\
        & \leq \Vert x- y \Vert + \tau \Vert \nabla u^{\epsilon}(x)- \nabla u^{\epsilon}(y) \Vert \\
        & \leq \left(1+\frac{\tau}{\lambda}\right)\Vert x-y \Vert.
    \end{align*}\end{proof}

\paragraph{Admissibility of $\g_f$.} With the solution to the Heat equation obtained with Cole-Hopf change of variables, we recover the solution $u^\epsilon$ to \eqref{eq:HJ_viscous}:
\begin{equation}
   (\forall x,\lambda \in \Hi \times [0,\Lambda]), \quad u^\epsilon(x,\lambda) = -\epsilon \log \left( G_{\epsilon \lambda} \ast \exp\left(-\frac{\phi}{\epsilon} \right)(x)\right)  
\end{equation} 
If $\phi$ is convex, then by preservation of log-concavity by the convolution, $u^\epsilon$ is convex w.r.t. $x$. Moreover, if $\phi$ is inf-compact, i.e., the level sets $[\phi \leq \alpha]$ are compact for all $\alpha \in \RR$ \cite{VarAnalRockafellar}, $u^\epsilon$ is also inf-compact. We prove this statement by showing the equivalent statement $u^\epsilon(x) \to +\infty$ when $\Vert x \Vert \to +\infty$.
\begin{proposition} \label{prop:admissibility_f}
    Let $\phi$ be a proper, l.s.c., convex, inf-compact function. Then the solution $u^\epsilon(\cdot, \lambda)$ of \eqref{eq:HJ_viscous} is proper, l.s.c., convex and inf-compact for all $\lambda \in [0,T]$.
\end{proposition}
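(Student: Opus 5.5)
We work from the explicit Cole--Hopf representation
\begin{equation*}
u^\epsilon(x,\lambda) = -\epsilon \log\left(G_{\epsilon\lambda}\ast e^{-\phi/\epsilon}(x)\right) = -\epsilon\log \int_{\Hi} G_{\epsilon\lambda}(x-y)\, e^{-\phi(y)/\epsilon}\,dy,
\end{equation*}
and check the four properties one at a time; for $\lambda=0$ we have $u^\epsilon=\phi$ and there is nothing to prove.

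\emph{Well-definedness and properness.} Inf-compactness of the l.s.c. convex $\phi$ gives a linear lower growth bound $\phi(y)\geq a\Vert y\Vert - b$ with $a>0$, since a convex function with bounded nonempty level sets is coercive at least linearly. Hence $e^{-\phi/\epsilon}\leq e^{b/\epsilon}e^{-a\Vert y\Vert/\epsilon}$ is integrable. The integral is therefore finite, so $u^\epsilon>-\infty$. It is also strictly positive, because $\dom\phi$ has nonempty interior in its affine hull and $\phi$ is bounded above on a set of positive measure. If $\dom\phi$ is lower dimensional, the Lebesgue integral vanishes; we then interpret everything relative to $\mathrm{aff}\,\dom\phi$, or simply assume full-dimensional domain, which is the case in the paper's setting where $\phi$ is finite. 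Thus $u^\epsilon(\cdot,\lambda)$ is real-valued everywhere.

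\emph{Convexity and continuity.} For convexity we reuse the argument already given: $e^{-\phi/\epsilon}$ is log-concave, the Gaussian is log-concave, and log-concavity is preserved by convolution \cite[Proposition 3.5]{saumard2014log}. So $x\mapsto \log f$ is concave and $u^\epsilon$ is convex. The map $x\mapsto\int G_{\epsilon\lambda}(x-y)e^{-\phi(y)/\epsilon}dy$ is continuous, indeed $C^\infty$, by dominated convergence, because the Gaussian is smooth with derivatives dominated locally uniformly by an integrable function times $e^{-\phi/\epsilon}$. Hence $u^\epsilon$ is continuous, in particular l.s.c.

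\emph{Inf-compactness (main step).} We show $u^\epsilon(x)\to+\infty$ as $\Vert x\Vert\to\infty$, which is equivalent since $u^\epsilon$ is continuous. Using $\phi(y)\geq a\Vert y\Vert-b$, we bound
\begin{equation*}
f(x)\leq e^{b/\epsilon}\int G_{\epsilon\lambda}(x-y)e^{-a\Vert y\Vert/\epsilon}dy,
\end{equation*}
and split the integral over $\{\Vert y\Vert\geq \Vert x\Vert/2\}$ and its complement. On the first region the integrand is at most $e^{-a\Vert x\Vert/(2\epsilon)}$ times a probability density. On the second region $\Vert x-y\Vert\geq\Vert x\Vert/2$, so the Gaussian is at most $(2\pi\epsilon\lambda)^{-N/2}e^{-\Vert x\Vert^2/(8\epsilon\lambda)}$, while $\int e^{-a\Vert y\Vert/\epsilon}dy<\infty$. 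Both terms tend to $0$, so $f(x)\to0$ and $u^\epsilon=-\epsilon\log f\to+\infty$. Getting the linear growth bound cleanly and handling this split carefully is the main obstacle; everything else follows from earlier results.
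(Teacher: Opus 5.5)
Your argument is correct, granting the full-dimensional-domain proviso you add. It keeps the paper's skeleton: the Cole--Hopf representation, log-concavity for convexity, and showing that the Gaussian integral tends to $0$ at infinity. The main step, however, is organized differently.

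\textbf{The main step.} The paper splits the integral over the sublevel set $[\phi\le M]$ and its complement. It uses only that sublevel sets are bounded and that $\inf\phi>-\infty$. No convexity enters, so that step gives coercivity of $u^\epsilon(\cdot,\lambda)$ for any proper l.s.c.\ inf-compact $\phi$. Finiteness of the integral also already follows from $\inf\phi>-\infty$, because the Gaussian factor is integrable.

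You instead use convexity first, to get $\phi\ge a\Vert\cdot\Vert-b$, and then split radially at $\Vert y\Vert=\Vert x\Vert/2$. This costs an extra lemma and some generality. In return, the order of choices is more transparent than the paper's juggling of $M$, $R$ and $\Vert x\Vert$. You also get a quantitative conclusion: $u^\epsilon(x,\lambda)\ge \tfrac{a}{2}\Vert x\Vert-b-\epsilon\log 2$ for $\Vert x\Vert$ large, i.e.\ $u^\epsilon$ inherits linear growth from $\phi$.

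\textbf{Two side choices that improve on the paper.}
\begin{itemize}
\item Dominated convergence, available since $\phi$ is bounded below, gives continuity of the integral. Fatou's lemma, as the paper invokes it, only yields lower semicontinuity of the integral. That gives upper semicontinuity of $u^\epsilon=-\epsilon\log(\cdot)$, which is the wrong direction. (Lower semicontinuity is also recoverable from convexity and finiteness.)
\item Your domain caveat is warranted. The function $\phi=\iota_{\{0\}}$ is proper, l.s.c., convex and inf-compact, yet it gives $u^\epsilon\equiv+\infty$ for $\lambda>0$. The paper's ``as it is also positive'' passes over this case.
\end{itemize}

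One correction to that caveat: reinterpreting relative to $\mathrm{aff}(\mathrm{dom}\,\phi)$ does not rescue the statement as posed, since the $N$-dimensional heat kernel still annihilates a Lebesgue-null domain. The genuine fix is the hypothesis $\mathrm{int}(\mathrm{dom}\,\phi)\neq\emptyset$, which holds automatically when $\phi$ is finite.
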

\begin{proof}
    The result is trivial for $\lambda = 0$ as $u^\epsilon(\cdot,0)=\phi$. Fix $\lambda\in(0,T]$. We have 
    \begin{align*}
   (\forall x \in \Hi), \quad u^\epsilon(x,\lambda) = -\epsilon \log \left(\frac{1}{(2\pi\epsilon\lambda)^{N/2}}\int_{\RR^N} \exp\left(-\frac{1}{\epsilon}\phi(y) - \frac{1}{2\epsilon\lambda}\Vert x-y \Vert^2 \right) dy\right).
\end{align*} 
By Fatou's lemma, $u^\epsilon(\cdot, \lambda)$ is l.s.c.. as the integrand is continuous in both $x$ and $y$, hence measurable and l.s.c.. Furthermore, $\phi$ is lower bounded, hence the integral in the log is always finite. As it is also positive, $u^\epsilon(\cdot,\lambda)$ is proper. For the inf-compactness, we can show that
\begin{align*}
  \lim_{\Vert x \Vert \to +\infty} I(x) := \int_{\RR^N} \exp\left(-\frac{1}{\epsilon}\phi(y) - \frac{1}{2\epsilon\lambda}\Vert x-y \Vert^2 \right) dy = 0,
\end{align*} 
to conclude. Let $R>0$, and $x$ such that $\Vert x \Vert \geq R$. By inf-compactness of $\phi$, there exists $M \in \RR$ such that $[\phi \leq M] \subseteq \mathbb{B}(0,R)$. We split $I$ into $I(x) = I_1(x) + I_2(x)$ where 
\begin{equation*}
    I_1(x) = \int_{[\phi \leq M]} \exp\left(-\frac{1}{\epsilon}\phi(y) - \frac{1}{2\epsilon\lambda}\Vert x-y \Vert^2 \right) dy,
\end{equation*}
and 
\begin{equation*}
    I_2(x) = \int_{[\phi > M]} \exp\left(-\frac{1}{\epsilon}\phi(y) - \frac{1}{2\epsilon\lambda}\Vert x-y \Vert^2 \right) dy.
\end{equation*}
Immediately, we have $I_2(x) \leq e^{\frac{-1}{\epsilon}M} (2\pi \epsilon \lambda)^{N/2}$. On the other hand as $\Vert x-y \Vert \geq \Vert x \Vert - R$ if $y \in [\phi(y)\leq M]$, we have 
\begin{equation*}
    I_1(x) \leq e^{\frac{-1}{\epsilon}\left(\inf \phi + \frac{1}{2\lambda}\left(\Vert x \Vert - R \right)^2\right) } \int_{[\phi \leq M]}dy. 
\end{equation*}
Therefore, we obtain
\begin{equation*}
    I(x) \leq e^{\frac{-1}{\epsilon}M} (2\pi \epsilon \lambda)^{N/2} + \lambda^N([\phi \leq M])e^{\frac{-1}{\epsilon}\left(\inf \phi + \frac{1}{2\lambda}\left(\Vert x \Vert - R \right)^2\right) }.
\end{equation*}
For any value $\delta>0$ we can set $M$ so that $I_2(x) \leq \frac{\delta}{2}$ and similarly we can choose $R$ so that $I_1(x) \leq \frac{\delta}{2}$ (again with $\Vert x \Vert \geq R$), hence choosing both $M$ and $R$ large enough yields $I(x) \leq \delta$ and thus $I(x) \rightarrow 0$ when $\Vert x \Vert \rightarrow +\infty$. 
\end{proof}
The admissibility is now readily available, but we can only guarantee that $\inf u^\epsilon \rightarrow \inf u$ as $\epsilon \rightarrow 0$, without convergence rates as $u^\epsilon$ converges uniformly to $u$ \cite{crandall1984two}.

\subsection{Admissibility of the approximations}
In this section, we discuss the admissibility of the different type of approximations. As we have seen in their respective presentation, showing this property without assumptions on $\epsilon$ is not possible in general for approximations of type (a), (b), and (d). However it is fairly reasonable to assume that these approximations can be constructed in a way that guarantees admissibility, and we support this claim by constructing approximations of the proximal operator of $\phi:=\frac{1}{2}\Vert \cdot \Vert_2^2$. Other examples can be found in Appendix \ref{app:admissible_examples}.
\paragraph{The example of the squared $\ell_2$-norm}
We set $\phi := \frac{\lambda}{2}\Vert \cdot \Vert^2_2$, $\lambda>0$. Its proximal operator is given for all $x \in \Hi$ by
\begin{equation}
    \prox_{\phi}(x)=\frac{x}{\lambda + 1}.
\end{equation}
Moreover, $\psi$ is defined as
\begin{equation}
    (\forall x \in \Hi), \quad \psi(x) = \frac{1}{2(\lambda+1)}\Vert x \Vert^2_2 + C.
\end{equation}
with some $C>0$. The $\epsilon$-subdifferential of $\phi$ is given by \cite[Example XI.1.1.2]{hiriart-urruty1993convexII}
\begin{equation*}
    (\forall x \in \Hi), \quad \partial_\epsilon \phi (x) = \lambda \{x + s \mid \Vert s \Vert^2_2 \leq \epsilon/\lambda \},
\end{equation*}
and similarly the $\epsilon$-subdifferential of $\psi$ is given by
\begin{equation}
    (\forall x \in \Hi), \quad \partial_\epsilon \psi(x) = \frac{1}{\lambda+1}\{x + s \mid \Vert s \Vert^2_2 \leq \epsilon(\lambda+1) \},
\end{equation}
\begin{itemize}
    \item Take $\g_a$ an $\epsilon$-approximation of type (a) of $\prox_\phi$ as
    \begin{equation}
        \g_a(x) = \prox_\phi(x) + e
    \end{equation}
    where $\Vert e \Vert_2 \leq \epsilon$.
    \item Take $\g_b$ an $\epsilon$- approximation of type (b) of $\prox_\phi$ as
\begin{equation}
    \g_b(x) = \prox_\phi(x + r)
\end{equation}
where $\Vert r \Vert_2 = \epsilon$.
    \item Take $\g_d$ an $\epsilon$-approximation of type (d) of $\prox_\phi$ as
    \begin{equation}
        \psi_\epsilon(x) = \psi(x) +\epsilon \exp\left(-\frac{\Vert x-e \Vert^2}{2}\right)
    \end{equation}
    with $\Vert e \Vert = \epsilon$. Hence,
    \begin{equation}
        \g_d(x) =\frac{x}{1+\gamma} - \epsilon (x-e) \exp \left(-\frac{\Vert x-e \Vert^2}{2} \right).
    \end{equation}
\end{itemize}

\begin{proposition} The approximations of $\prox_\phi$ are such that
    \begin{enumerate}[label=(\alph*)]
    \item $y \in \Fix \g_a \Leftrightarrow y= \frac{1+\gamma}{\gamma}e$. The fixed point is unique.
    \item $y \in \Fix \g_b \Leftrightarrow y = \frac{r}{\gamma}$. The fixed point is unique.
    \item There exists $t \in (-1,1)$ such that $y\in \Fix \g_d \Leftrightarrow y = te$.
    \end{enumerate}
\end{proposition}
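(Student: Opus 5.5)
Throughout, I read the parameter $\gamma$ in the statement as the weight $\lambda$ of $\phi=\frac{\lambda}{2}\Vert\cdot\Vert_2^2$, so that $\prox_\phi(x)=\frac{x}{1+\gamma}$. I likewise read the $\frac{x}{1+\gamma}$ in $\g_d$ as $\nabla\psi(x)$. The plan is to write the fixed-point equation $y=\g(y)$ for each approximation and solve it by direct algebra. For (c) this reduces to a one-dimensional equation in a scalar multiplier.

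\textbf{Parts (a) and (b).} For (a), $y=\g_a(y)$ reads $y=\frac{y}{1+\gamma}+e$, that is $\frac{\gamma}{1+\gamma}y=e$. Since $\gamma>0$ the coefficient is nonzero, so the equation is linear and uniquely solvable with $y=\frac{1+\gamma}{\gamma}e$. For (b), $y=\frac{y+r}{1+\gamma}$ gives $\gamma y=r$, hence the unique fixed point $y=\frac{r}{\gamma}$. Both equivalences are immediate because each step is reversible.

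\textbf{Part (c).} For $\g_d$, set $E(y):=\exp(-\Vert y-e\Vert^2/2)\in(0,1]$ and $c:=\frac{\gamma}{1+\gamma}>0$. The equation $y=\g_d(y)$ rearranges to
\begin{equation*}
\bigl(c+\epsilon E(y)\bigr)\,y=\epsilon E(y)\,e .
\end{equation*}
The scalar $c+\epsilon E(y)$ is positive. Hence every fixed point is necessarily of the form $y=te$ with
\begin{equation*}
t=\frac{\epsilon E(te)}{c+\epsilon E(te)}\in(0,1)\subset(-1,1).
\end{equation*}
This gives the direction ``$y\in\Fix\g_d\Rightarrow y=te$''.

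For the converse and for existence, restrict to the line $y=te$. Since $\Vert e\Vert=\epsilon$, we have $E(te)=\exp(-(1-t)^2\epsilon^2/2)$, and the fixed-point equation becomes the scalar equation $t=h(t)$ with
\begin{equation*}
h(t):=\frac{\epsilon\exp(-(1-t)^2\epsilon^2/2)}{c+\epsilon\exp(-(1-t)^2\epsilon^2/2)}.
\end{equation*}
The map $h$ is continuous on $[0,1]$ with values in $(0,1)$. Therefore $t\mapsto t-h(t)$ is negative at $t=0$ and positive at $t=1$, and the intermediate value theorem gives a root $t\in(0,1)$. Conversely, any such root yields $y=te\in\Fix\g_d$ by reversing the algebra above.

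\textbf{Main obstacle.} The only delicate point is interpreting the claim in (c). I read it as saying that $\Fix\g_d$ is nonempty and contained in $\{te\mid t\in(0,1)\}$, with the admissible $t$ exactly the roots of $t=h(t)$. Uniqueness is not claimed, and I would not try to prove it. If desired, uniqueness could be checked for small $\epsilon$ by showing $h'(t)<1$.
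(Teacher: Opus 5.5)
Your proof is correct and follows the paper's route: you solve the linear fixed-point equations in (a) and (b) directly, and in (c) you show that fixed points are collinear with $e$ and get existence from the intermediate value theorem along $t\mapsto te$. Your version is more explicit than the paper's one-line argument, since the necessity step alone already places $t$ in $(0,1)$, and reading (c) as existence without uniqueness is consistent with what the paper actually proves.
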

\begin{proof}
    \begin{enumerate}[label=(\alph*)]
        \item Straightforward. 
        \item Straightforward. 
    \item Finding a fixed point of $\g_d$ is equivalent as finding $t\in \RR$ such that $y= t e$. By the intermediate value theorem, $t \in (-1,1)$. 
    \end{enumerate}
\end{proof}

\newpage

\section{Convergence of inexact proximal algorithms}
Now that we have studied the properties of the approximations of the proximal operator, we study the conditions under which these approximations can be incorporated in proximal algorithms while guaranteeing convergence. %
We will first consider the case of the proximal point algorithm, the easiest to study, before taking into account the term $f$. In this section, we will assume that the approximations are admissible (Definition \ref{def:admissible}). Or, when composed with the operator of the data fidelity term, that fixed points of the composition exist if we cannot prove that they do without specifying the approximations.

\subsection{Proximal point algorithms}
In this section, we investigate the convergence guarantees associated with each approximation when we iterate proximal point algorithms. Given that for all approximations, we showed in the previous section its $(L_\g,\gamma)$-Lipschitzness and its $\sigma(\epsilon)$ approximation power, we provide general convergence guarantees before specifying the results.

\paragraph{Strongly convex $\phi$.}
If we assume strong convexity of $\phi$, its proximal operator is a strict contraction: $L_\psi = \frac{1}{1+\mu}$ \cite{bauschke2012firmly}. Hence, we can study the sequence generated by 
\begin{equation} \label{eq:prox_point}
    x_{k+1} = \g_k(x_k),
\end{equation}
as $\g_k$ is a strict contraction up to $\gamma_k$, and approximates $\prox_\phi$ with error $\epsilon_k$. We study the vanishing case by letting $\gamma_k$ and $\epsilon_k$ decrease along the iterates with the limit condition $\lim_{k\to+\infty}\gamma_k+\sigma(\epsilon_k)=0$. 
\begin{proposition} \label{prop:proximal_point}
    Let $\phi$ be a proper, l.s.c., $\mu>0$-strongly convex function. Let $(\g_k)_{k\in\mathbb{N}}$ be a sequence of $(\epsilon_k)$-approximations of $\prox_\phi$ with $(L_\g,\gamma_k)$-Lipschitzness ($L_\g<1$). Then the sequence $(x_k)_{k\in\mathbb{N}}$ generated by \eqref{eq:prox_point}, converges to
    \begin{enumerate}
        \item a ball of radius $\frac{(\gamma+\sigma(\epsilon))}{1-L_\g}$ around the minimizer of $\phi$, if $\gamma_k\leq \gamma$ and $\epsilon_k \leq \epsilon$ for all $k$,
        \item the minimizer of $\phi$, if $\lim_{k\to+\infty}\gamma_k+\sigma(\epsilon_k)=0$. %
    \end{enumerate}
\end{proposition}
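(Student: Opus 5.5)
The plan is to compare the iterates directly with the unique minimizer $x^\star$ of $\phi$. Since $\phi$ is $\mu$-strongly convex, $x^\star$ is the unique fixed point of $\prox_\phi$, so $x^\star=\prox_\phi(x^\star)$. For each $k$ we insert $\g_k(x^\star)$ and split
\begin{equation*}
\Vert x_{k+1}-x^\star\Vert \leq \Vert \g_k(x_k)-\g_k(x^\star)\Vert + \Vert \g_k(x^\star)-\prox_\phi(x^\star)\Vert .
\end{equation*}
The first term is bounded by the $(L_\g,\gamma_k)$-Lipschitzness (Definition~\ref{def:regularity}), giving $L_\g\Vert x_k-x^\star\Vert+\gamma_k$. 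The second term is bounded by qualitativeness (Definition~\ref{def:qualitative}) at the point $x^\star\in\dom\phi$, giving $\sigma(\epsilon_k)$.

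Writing $a_k:=\Vert x_k-x^\star\Vert$ and $b_k:=\gamma_k+\sigma(\epsilon_k)$, this yields the perturbed contraction recursion
\begin{equation*}
a_{k+1}\leq L_\g a_k + b_k .
\end{equation*}
Unrolling it gives
\begin{equation*}
a_k\leq L_\g^k a_0+\sum_{j=0}^{k-1}L_\g^{k-1-j}b_j .
\end{equation*}

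For item 1, assume $\gamma_k\le\gamma$ and $\epsilon_k\le\epsilon$. Taking $\sigma$ nondecreasing, which is implicit in the paper's use of $\sigma(\epsilon)$ as a uniform bound (one can otherwise replace $\sigma(\epsilon)$ by $\sup_k\sigma(\epsilon_k)$), we get $b_j\le\gamma+\sigma(\epsilon)$. The geometric sum is then at most $(\gamma+\sigma(\epsilon))/(1-L_\g)$. Since $L_\g<1$, the term $L_\g^ka_0\to0$, so $\limsup_k a_k\le(\gamma+\sigma(\epsilon))/(1-L_\g)$. In other words, the distance of $x_k$ to the ball of that radius centred at $x^\star$ tends to zero.

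For item 2, the expected obstacle is showing that the convolution-type sum $\sum_j L_\g^{k-1-j}b_j$ vanishes when $b_k\to0$. The plan is a standard two-step argument.
\begin{itemize}
\item Fix $\eta>0$ and choose $K$ such that $b_j\le\eta$ for all $j\ge K$.
\item Split the sum at $K$. The tail part is at most $\eta/(1-L_\g)$.
\item The head part is at most $L_\g^{k-K}\sum_{j<K}L_\g^{K-1-j}b_j$. This is a fixed finite quantity times $L_\g^{k-K}\to0$.
\end{itemize}
Hence $\limsup_k a_k\le\eta/(1-L_\g)$ for every $\eta>0$, so $a_k\to0$ and $x_k\to x^\star$. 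Equivalently, one can apply the item 1 bound from the shifted starting index $K$.

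Two minor points need checking. First, $x^\star\in\dom\phi$, so that Definition~\ref{def:qualitative} applies there. Second, $\prox_\phi$ is single-valued, since $\phi$ is strongly convex and hence convex. Finally, note that no fixed point of $\g_k$ is needed, because the argument only uses $x^\star$ and never a fixed point of the approximations.
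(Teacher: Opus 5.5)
Your proof is correct and follows the paper's argument: you insert $\g_k(x^\star)$ to obtain $a_{k+1}\le L_\g a_k+\gamma_k+\sigma(\epsilon_k)$, unroll it in the same way, and split the convolution sum in the vanishing case (you split by the time index $j\ge K$, the paper by the lag $j\le J(\eta)$, which is equivalent). You also note that item 1 needs $\sigma$ nondecreasing, or $\sup_k\sigma(\epsilon_k)$ in place of $\sigma(\epsilon)$; the paper glosses over this because it only treats the constant case $\gamma_k=\gamma$, $\epsilon_k=\epsilon$.
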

\begin{proof}
    We look at the residual with $x^* \in \Fix \prox_\phi$. 
    \begin{align*}
        \Vert x_{k+1}-x^* \Vert & = \Vert \g_k(x_k) -\prox_\phi(x^*) \Vert \\
        & = \Vert \g_k(x_k) -g_k(x^*) + g_k(x^*)-\prox_\phi(x^*) \Vert \\
        & \leq  L_\g\Vert x_k - x^* \Vert +\gamma_k + \sigma(\epsilon_k) \\
        & \leq L_\g \left( L_\g\Vert x_{k-1} - x^* \Vert +\gamma_{k-1} +\sigma(\epsilon_{k-1}) \right) +\sigma(\epsilon_k) \\
        & \leq L_\g^{k+1} \Vert x_0 - x^* \Vert + \sum_{i=0}^k \left(\gamma_i + \sigma(\epsilon_i)\right)L_\g^{k-i} \\
        &= L_\g^{k+1} \Vert x_0 - x^* \Vert + \sum_{j=0}^k \left(\gamma_{k-j} + \sigma(\epsilon_{k-j})\right)L_\g^{j}
    \end{align*} 
    If $\gamma_k = \gamma$ and $\epsilon_k = \epsilon$ for all $k$, then,
    \begin{align*}
        \Vert x_{k+1}-x^* \Vert & \leq L_\g^{k+1} \Vert x_0 - x^* \Vert + (\gamma+\sigma(\epsilon)) \frac{1-L_g^{k+1}}{1-L_\g} \\
        & \xrightarrow[k\to+\infty]{} (\gamma+\sigma(\epsilon)) \frac{1}{1-L_\g}.
    \end{align*}
    If $\gamma_k,\epsilon_k\to 0$ as $k \to +\infty$, then, 
    \begin{align*}
        \Vert x_{k+1}-x^* \Vert & \leq L_\g^{k+1} \Vert x_0 - x^* \Vert + \sum_{j=0}^k \left(\gamma_{k-j} + \sigma(\epsilon_{k-j})\right)L_\g^{j} \\
        & \xrightarrow[k\to+\infty]{} 0.
    \end{align*}
    Indeed, $\sum_{j=0}^k \left(\gamma_{k-j} + \sigma(\epsilon_{k-j})\right)L_\g^{j} \to 0$ when $k$ goes to infinity. First, as $\gamma_k+\sigma(\epsilon_k) \to 0$, $(\gamma_k+\sigma(\epsilon_k))_{k\in\mathbb{N}}$ is bounded by a quantity $C>0$. Second, $\sum_{j\geq 0} L_g^j$ is also bounded by a quantity $M>0$. Hence, for $\eta>0$, there exists $J(\eta)$ such that
    \begin{equation*}
        \sum_{j> J(\eta)} L_\g^{j}  \leq \frac{\eta}{2C},
    \end{equation*}
    thus, for all $K > J(\eta)$, we have 
    \begin{equation*}
        \sum_{j> J(\eta)}^K \left(\gamma_{K-j}+\sigma(\epsilon_{K-j})\right)L_\g^{j} \leq \frac{\eta}{2}.
    \end{equation*}
    Now by the fact that $\gamma_k+\sigma(\epsilon_k) \to 0$ as $k$ goes to infinity, there exists $K(\eta)$ such that for all $j \in \{0,1,\ldots,J(\eta)\}$ and for all $k \geq K(\eta)$
    \begin{equation*}
        \gamma_{k-j}+\sigma(\epsilon_{k-j}) \leq \frac{\eta}{2 M}.
    \end{equation*}
    Therefore, for all $k \geq K(\eta)$
    \begin{equation*}
        \sum_{j=0}^{J(\eta)} \left(\gamma_{k-j}+\sigma(\epsilon_{k-j})\right)L_\g^j \leq \frac{\eta}{2 M} M = \frac{\eta}{2}.
    \end{equation*}
    As these statements hold for any $\eta>0$ we have
    \begin{equation*}
        \sum_{j=0}^k \left(\gamma_{k-j}+\sigma(\epsilon_{k-j})\right)L_\g^j \xrightarrow[k\to+\infty]{} 0.
    \end{equation*}
\end{proof}
This result is probably the best one can hope for approximation with non-zero $\gamma$ relaxation of the Lipschitz continuity, and non summable errors. %
\paragraph{Convex $\phi$.}
In a convex setting, the approximations can be at best non-expansive, hence the previous convergence analysis does not hold. The following Krasnosel'ski\u{\i}--Mann iterations, with $x_0 \in \Hi$ and for $k=0,1,\ldots,$
\begin{align}
    y_k & = x_k + \alpha_k (x_k - x_{k-1}) + \epsilon_k \nonumber \\
    z_k &= x_k + \beta_k (x_k - x_{k-1}) + \rho_k \nonumber \\
    x_{k+1} &= (1-\lambda_k)y_k + \lambda_k \g z_k + \theta_k \label{alg:peypouquet}
\end{align}
where the hyperparameters follows Assumption \ref{ass:km}.
If $\Fix \g\neq \emptyset$, and $g:K \to K$ is quasinonexpansive, then by \cite{cortild2025krasnoselskii} (Theorem \ref{th:peypouquet}), $(x_k)_{k \in \mathbb{N}}$ converges strongly to a point in $\Fix \g$ (if they exist). Therefore, we directly have the following result
\begin{proposition}
    Let $\phi:\Hi\to\overline{\RR}$ be a proper, l.s.c, convex function. Let $\g$ be an admissible $\epsilon$ approximation of $\prox_\phi$, with regularity $(1,0)$. Then, the sequence $(x_k)_{k\in \mathbb{N}}$ generated by \eqref{alg:peypouquet}, starting from $x_0 \in \Hi$, converges to a point in $\Fix \g$.
\end{proposition}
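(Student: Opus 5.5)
The proof reduces to checking the hypotheses of Theorem \ref{th:peypouquet} for the constant family $T_k := \g$. The only hypotheses that depend on $\g$ are quasi-nonexpansiveness, nonemptiness of $F=\bigcap_k \Fix T_k = \Fix \g$, and asymptotic demiclosedness of $(\Id - T_k)_k$ at $0$. The hyperparameters are assumed to satisfy Assumption \ref{ass:km}. The error sequences $(\epsilon_k),(\rho_k),(\theta_k)$ in \eqref{alg:peypouquet} are taken summable, as required by Theorem \ref{th:peypouquet}; in particular they are allowed to be zero.

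\emph{Step 1 (fixed points).} Admissibility (Definition \ref{def:admissible}) gives a fixed point of $\g$ near a local minimizer of $\phi$. Hence $F=\Fix\g\neq\emptyset$.

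\emph{Step 2 (quasi-nonexpansiveness).} Regularity $(1,0)$ in the sense of Definition \ref{def:regularity} means $\Vert \g(x)-\g(y)\Vert\leq \Vert x-y\Vert$ for all $x,y$, so $\g$ is nonexpansive. Take $y=p\in\Fix\g$. Then $\Vert \g(x)-p\Vert = \Vert \g(x)-\g(p)\Vert \leq \Vert x-p\Vert$, which is quasi-nonexpansiveness.

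\emph{Step 3 (demiclosedness).} Let $u_k\rightharpoonup u$ with $\g u_k - u_k\to 0$. In $\Hi=\RR^N$ weak and strong convergence coincide, so $u_k\to u$. Since $\g$ is $1$-Lipschitz, it is continuous, and therefore $\g u - u = \lim_k (\g u_k - u_k) = 0$. Thus $u\in\Fix\g = \bigcap_k \Fix T_k$, which is asymptotic demiclosedness at $0$. One could instead invoke Browder's demiclosedness principle for nonexpansive maps, but the finite-dimensional continuity argument is enough here.

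\emph{Step 4 (conclusion).} Theorem \ref{th:peypouquet} now gives $(x_k,y_k,z_k)\to(p^*,p^*,p^*)$ with $p^*\in\Fix\g$, which is the claim. The convergence is strong because we are in finite dimension.

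The only delicate point is Step 1. Admissibility is an assumption about a fixed point near a local minimizer, and we use it only as nonemptiness of $\Fix\g$. If $\g$ is defined only on a set $K$ with $\g:K\to K$, as in the text preceding the proposition, one must also check that the iterates stay in $K$. This holds if $K=\Hi$, or if $K$ is closed convex and the errors keep $y_k,z_k\in K$. I would state that caveat explicitly rather than attempt to prove it.
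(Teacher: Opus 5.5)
Your proposal is correct and follows the paper's argument: the paper also applies Theorem \ref{th:peypouquet} directly to the constant family $T_k=\g$. It uses admissibility to get $\Fix\g\neq\emptyset$ and $(1,0)$-regularity to get (quasi-)nonexpansiveness. You are more careful than the paper, which leaves implicit the asymptotic demiclosedness of $\Id-\g$ at $0$ and the summability of the error sequences $(\epsilon_k),(\rho_k),(\theta_k)$; both are hypotheses of Theorem \ref{th:peypouquet}.
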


Without any more knowledge on our approximations, only approximations of type (d) and (f) may satisfy these assumptions. The rest of the approximation have a non-zero $\gamma$ error in their Lipschitz continuity, therefore they require either more explicit construction or $\mu>0$-strong convexity of $\phi$ in order to converge to a solution. 

\subsection{Proximal splitting algorithms}
In this section we aim at minimizing the sum of $f$ and $\phi$ with proximal splitting algorithms. We can regroup the convergence analysis of the algorithms by proving either that the operator (regrouping the action of the operators of $f$ and $\phi$) is quasi non-expansive, or a contraction. In the following we will make the following assumption:
\begin{assumption}\label{ass:strong_convexity} $f$ is $\mu$-strongly convex and $L_f$-smooth.
\end{assumption}
The value of $\mu$ will allows us to counteract the higher Lipschitz constant of the approximations. We then study the following optimization operators:
\paragraph{Forward-backward splitting.}
We replace in the usual forward-backward splitting the proximal operator of $\phi$ by $\g$ so that:
\begin{equation} \label{eq:T_FB}
    T_{\tau FB} = \g\left(\Id - \tau \nabla f \right).
\end{equation}
\paragraph{Peaceman-Rachford splitting.} We replace the reflection operator of $\prox_\phi$ by $2\g-\Id$, to define symmetrically the Peaceman-Rachford splitting:
\begin{equation}
    T_{PR,\g,\tau f} = (2\g -\Id) \circ (2\prox_{\tau f} -\Id), \text{ and } T_{PR,\tau f,\g} = (2\prox_{\tau f} -\Id)\circ(2\g-\Id).
\end{equation}
In a typical convex optimization setting, the order does not matter w.r.t. the solutions, but in our case, as $\g$ is $(L_\g,\gamma)$-Lipschitz continuous, the order matters. 
\paragraph{Douglas-Rachford splitting.} The Douglas-Rachford splitting is the average between $\Id$ and the $T_{PR}$, i.e.,
\begin{equation}
    T_{DR, \g,\tau f} = \frac{\Id+T_{PR, \g, \tau f}}{2}, \text{ and }  T_{DR, \tau f ,\g} =\frac{\Id + T_{PR,\tau f, \g}}{2}.
\end{equation}
We display in Figure \ref{fig:Lipschitz_constants} the Lipschitz continuity of these operators with respect to the values of $L_\g$ and of $\mu/L_f$ if $\gamma = 0$.
\begin{figure}[ht]
  \centering
  \begin{subfigure}{0.32\textwidth}
    \includegraphics[width=\linewidth]{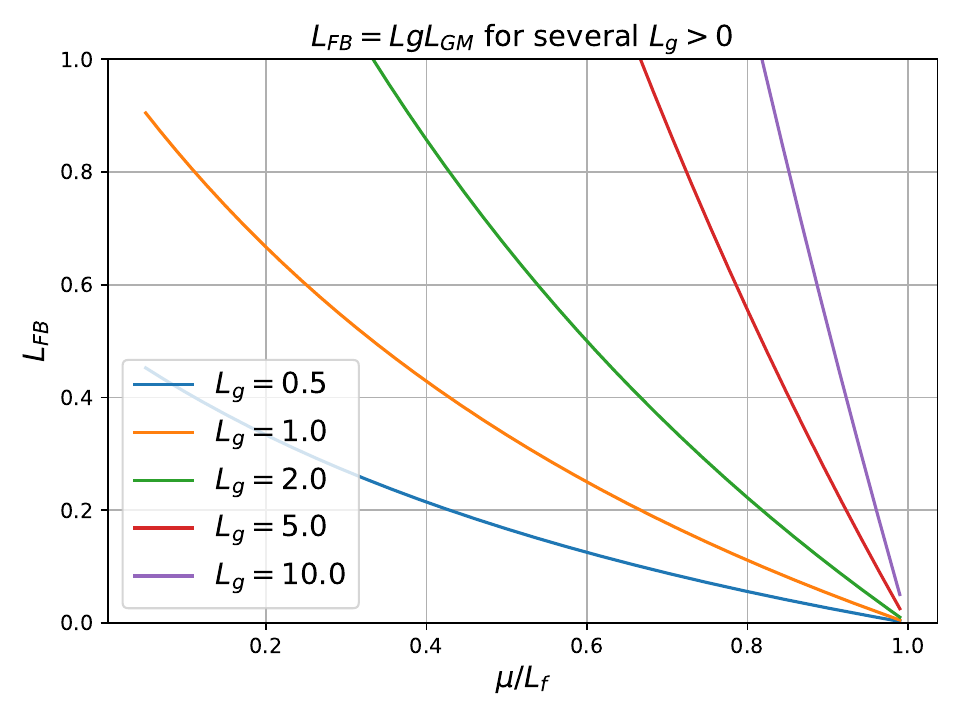}
    \caption{FB contractivity}
    \label{fig:a}
  \end{subfigure}
  \begin{subfigure}{0.32\textwidth}
    \includegraphics[width=\linewidth]{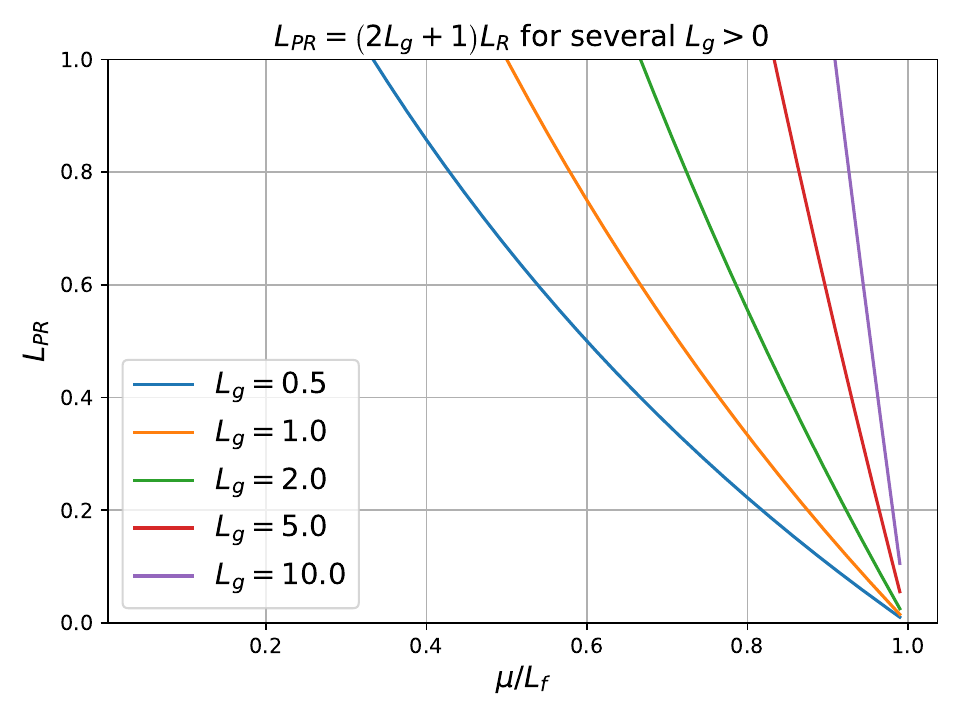}
    \caption{PR contractivity}
    \label{fig:b}
  \end{subfigure}
  \begin{subfigure}{0.32\textwidth}
    \includegraphics[width=\linewidth]{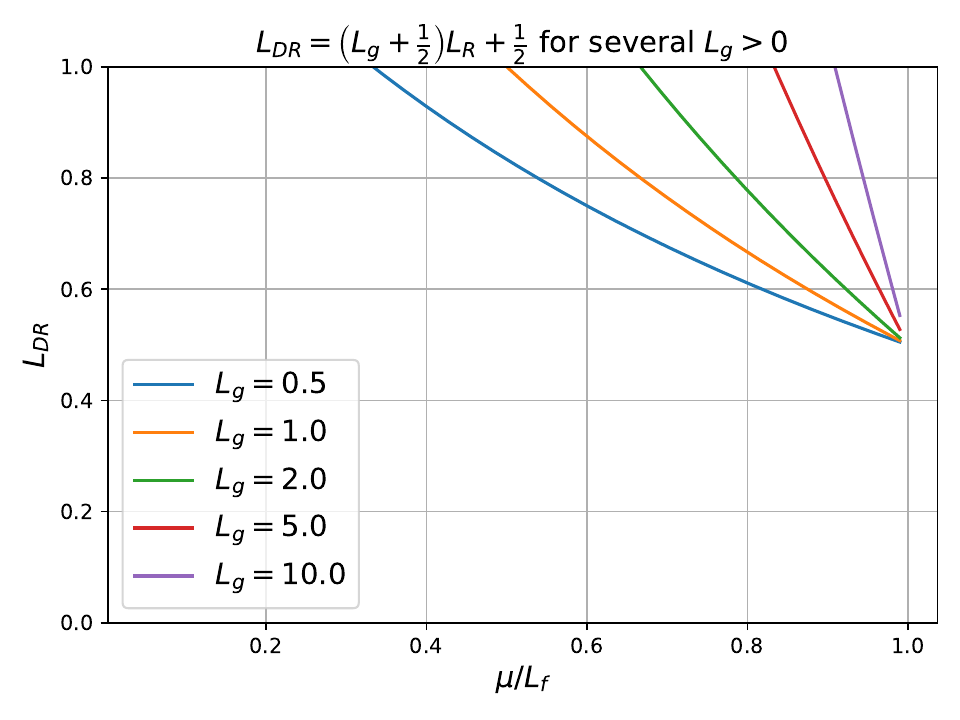}
    \caption{DR contractivity}
    \label{fig:c}
  \end{subfigure}
  \caption{Contractivity of the operators, with respect to the Lipschitz constant of $\g$ and the strong convexity of $f$, when the continuity is exact and varying $\mu/L_f$ for the optimal value of $\tau$ \cite{ryu2016primer}.}
  \label{fig:Lipschitz_constants}
\end{figure}
One can see that even in the case where $L_\g$ is strictly greater than $1$, existence and convergence to a solution can be obtained for the three algorithms. Note for instance that $L_\g=10$ corresponds to $0.9$-weakly convex function and recall that $1$-weakly convex function do not have Lipschitz continuous proximal operators. 

This is a promising result, notably for plug-and-play methods where contractivity of $\g$ is required in one way or another for convergence to happen (albeit in more general settings) \cite{hurault2022proximal,terris2021learning,bredies2024learning}. Using the computations done in the proof of Proposition \ref{prop:proximal_point}, we infer the convergence guarantees of the three splitting algorithms.

\begin{proposition}{Convergence of Forward-backward splitting.} Suppose that Assumption \ref{ass:strong_convexity} holds.  Let $(\g_k)_{k\in\mathbb{N}}$ be a sequence of $(\epsilon_k)$-approximations of $\prox_\phi$ with $(L_\g,\gamma_k)$-Lipschitzness. Then, the sequence $(x_k)_{k\in \mathbb{N}}$ generated by $x_0 \in \Hi$ and $x_{k+1} = T_{\tau FB_k}(x_k)$ with $\tau >0$ converges to
    \begin{enumerate}
        \item a ball of radius $\frac{\gamma+\sigma(\epsilon)}{1-L_{FB}}$ around the minimizer of $f+\phi/\tau$, if $\gamma(x_k)\leq \gamma$ and $\epsilon_k \leq \epsilon$ for all $k$,
        \item the minimizer of $f+\phi/\tau$, if $\lim_{k\to+\infty}\gamma(x_k)+\sigma(\epsilon_k)=0$. 
    \end{enumerate}
    if 
    \begin{equation*}
        L_{FB} := L_\g L_{GM}<1.
    \end{equation*}
\end{proposition}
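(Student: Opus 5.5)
The plan is to reduce the statement to the argument of Proposition \ref{prop:proximal_point}, with $T_{\tau FB_k} = \g_k \circ (\Id - \tau\nabla f)$ in place of $\g_k$. Here $L_{GM}$ denotes the Lipschitz constant of the gradient step $\Id - \tau \nabla f$. Under Assumption \ref{ass:strong_convexity}, the classical bound gives $L_{GM} = \max\{|1-\tau\mu|, |1-\tau L_f|\}$, which is strictly less than $1$ for $\tau \in (0, 2/L_f)$.

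\textbf{Step 1: reference point.} Let $x^*$ be the fixed point of the exact forward-backward operator $T^* := \prox_\phi \circ (\Id - \tau \nabla f)$. I will use the notation of the statement, where $x^*$ is characterized by $0 \in \nabla f(x^*) + \partial \phi(x^*)/\tau$ up to the scaling convention for $\phi$. To show $x^*$ exists and is unique, I will estimate the Lipschitz constant of $T^*$. It is at most $L_\psi L_{GM}$, and $L_\psi \le L_\g$ since the $\gamma$-slack is the only freedom. Then $L_\psi L_{GM} \le L_{FB} < 1$, so Banach's theorem gives existence and uniqueness. The relation $L_\psi \le L_\g$ holds for every approximation type in Table \ref{tab:summary_approx}, and I will take it as part of the setting. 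Otherwise I would simply assume $\Fix T^* \neq \emptyset$, as the section announces.

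\textbf{Step 2: one-step recursion.} Set $z_k = (\Id-\tau\nabla f)(x_k)$ and $z^* = (\Id-\tau\nabla f)(x^*)$. Then
\begin{align*}
\Vert x_{k+1} - x^*\Vert &= \Vert \g_k(z_k) - \prox_\phi(z^*)\Vert \\
&\le \Vert \g_k(z_k) - \g_k(z^*)\Vert + \Vert \g_k(z^*) - \prox_\phi(z^*)\Vert \\
&\le L_\g L_{GM}\Vert x_k - x^*\Vert + \gamma_k + \sigma(\epsilon_k).
\end{align*}
The first term uses the $(L_\g,\gamma_k)$-Lipschitzness of $\g_k$ composed with the $L_{GM}$-Lipschitz gradient step. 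The second uses qualitativeness (Definition \ref{def:qualitative}), evaluated at the single point $z^*$.

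\textbf{Step 3: conclusion.} This is the same scalar recursion $a_{k+1} \le L_{FB} a_k + \gamma_k + \sigma(\epsilon_k)$ as in Proposition \ref{prop:proximal_point}, with $L_\g$ replaced by $L_{FB} < 1$. Unrolling it gives $a_{k+1} \le L_{FB}^{k+1} a_0 + \sum_{j=0}^k (\gamma_{k-j} + \sigma(\epsilon_{k-j})) L_{FB}^j$.
\begin{itemize}
\item If the errors are bounded by $\gamma$ and $\epsilon$, the geometric sum gives $\limsup_k a_k \le (\gamma + \sigma(\epsilon))/(1 - L_{FB})$.
\item If $\gamma_k + \sigma(\epsilon_k) \to 0$, the same split of the sum into a tail and a head, with the constants $C$, $M$, $J(\eta)$, $K(\eta)$ from Proposition \ref{prop:proximal_point}, gives $a_k \to 0$.
\end{itemize}

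\textbf{Main obstacle.} The delicate point is Step 1: making sure the reference point $x^*$ is the true minimizer and exists without using contractivity of $\g$. The fix is to rely only on the Lipschitz constant of the exact prox, $L_\psi \le L_\g$, rather than on any fixed point of $T_{\tau FB_k}$, which may fail to exist when $\gamma_k > 0$. A second subtlety is that $\sigma(\epsilon_k)$ is needed only at the one point $z^*$, so the domain in Definition \ref{def:qualitative} must contain $z^*$; for the compact-$K$ type (d) setting, I would assume $z^* \in K$.
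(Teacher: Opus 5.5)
Your proposal is correct and matches the paper's proof. The paper also takes $x^*$ as the fixed point of $\prox_\phi\circ(\Id-\tau\nabla f)$, splits $\Vert \g_k(z_k)-\prox_\phi(z^*)\Vert$ through $\g_k(z^*)$, and reuses the unrolled recursion of Proposition \ref{prop:proximal_point} with $L_{FB}$ in place of $L_\g$. One caution on Step 1: $L_\psi\le L_\g$ does not follow from the definitions, since the slack $\gamma$ and type (d) approximations allow $L_\g<L_\psi$. The paper simply takes the unique minimizer as given, so your fallback assumption $\Fix T^*\neq\emptyset$ is the right way to close that step.
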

\begin{proof}
    The unique minimizer of $f+\phi/\tau$, $x^*$ is such that
    \begin{align*}
        \Vert x_{k+1} - x^* \Vert = \Vert \g_k(x_k - \tau \nabla f(x_k)) - \prox_\phi(x^* - \tau \nabla f(x^*))\Vert
    \end{align*}
    The rest follows from the proof of Proposition \ref{prop:proximal_point}.
\end{proof}
\begin{proposition}{Convergence of Peaceman-Rachford splitting.} Suppose that Assumption \ref{ass:strong_convexity} holds. Let $(\g_k)_{k\in\mathbb{N}}$ be a sequence of $(\epsilon_k)$-approximations of $\prox_\phi$ with $(L_\g,\gamma_k)$-Lipschitzness. Then, the sequence $(x_k)_{k\in \mathbb{N}}$ generated by $x_0 \in \Hi$ and $x_{k+1} = T_{PR,\g_k,\tau f}(x_k)$ with $\tau >0$ converges to
    \begin{enumerate}
        \item a ball of radius $\frac{2(\gamma+\sigma(\epsilon))}{1-L_{PR,\g,f}}$ around the minimizer of $f+ \phi/\tau$, if $\gamma(x_k)\leq \gamma$ and $\epsilon_k \leq \epsilon$ for all $k$,
        \item the minimizer of $f+\phi/\tau$, if $\lim_{k\to+\infty}\gamma(x_k)+\sigma(\epsilon_k)=0$,%
    \end{enumerate}
    And, the sequence $(x_k)_{k\in \mathbb{N}}$ generated by $x_0 \in \Hi$ and $x_{k+1} = T_{PR,\tau f,\g_k}(x_k)$ with $\tau >0$ converges to
    \begin{enumerate}
        \item a ball of radius $\frac{2 L_R (\gamma+\epsilon)}{1-L_{PR,\g,f}}$ around the minimizer of $f+ \phi/\tau$, if $\gamma(x_k)\leq \gamma$ and $\epsilon_k \leq \epsilon$ for all $k$,
        \item the minimizer of $f+ \phi/\tau$, if $\lim_{k\to+\infty}\gamma(x_k)+\sigma(\epsilon_k)=0$, %
    \end{enumerate}
    if 
    \begin{equation*}
        L_{PR,\g,f} := (2L_\g+1) L_R <1.
    \end{equation*}
\end{proposition}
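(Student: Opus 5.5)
The plan is to reduce both statements to the contraction-with-perturbation argument already carried out in the proof of Proposition \ref{prop:proximal_point}. Let $x^*$ be the unique minimizer of $f+\phi/\tau$; it exists and is unique since $f$ is $\mu$-strongly convex and $\phi$ is weakly convex, at least when the total is strongly convex. I will take as known that a fixed point $p^*$ of the exact Peaceman--Rachford operator $T^*:=R_{\prox_\phi}\circ R_{\prox_{\tau f}}$ exists, with $\prox_{\tau f}(p^*)$ the minimizer. Here $R_T$ denotes $2T-\Id$, and $L_R$ denotes the Lipschitz constant of $R_{\prox_{\tau f}}$. Since $f$ is $\mu$-strongly convex and $L_f$-smooth, $L_R = \max\{|1-\tau\mu|/(1+\tau\mu),\,|1-\tau L_f|/(1+\tau L_f)\}<1$, and I take this value as given. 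The strategy is a one-step inequality $\Vert x_{k+1}-p^*\Vert \le L_{PR}\Vert x_k-p^*\Vert + c_k$, where $c_k$ collects the errors. Unrolling it as in Proposition \ref{prop:proximal_point} gives the geometric term $L_{PR}^{k+1}\Vert x_0-p^*\Vert$ plus a convolution $\sum_j c_{k-j}L_{PR}^j$. This convolution is bounded by $\sup_k c_k/(1-L_{PR})$ in the bounded-error case, and tends to $0$ when $c_k\to 0$, by the same splitting of the sum into $j\le J(\eta)$ and $j>J(\eta)$.

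For the ordering $T_{PR,\g_k,\tau f}$, set $w_k = R_{\prox_{\tau f}}(x_k)$ and $w^* = R_{\prox_{\tau f}}(p^*)$. I write
\[
x_{k+1}-p^* = (2\g_k-\Id)(w_k) - (2\prox_\phi-\Id)(w^*) = 2\bigl(\g_k(w_k)-\g_k(w^*)\bigr) + 2\bigl(\g_k(w^*)-\prox_\phi(w^*)\bigr) - (w_k-w^*).
\]
The first bracket is bounded using $(L_\g,\gamma_k)$-Lipschitzness, the second by qualitativeness through $\sigma(\epsilon_k)$, and the third by $L_R\Vert x_k-p^*\Vert$. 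This gives the constant $(2L_\g+1)L_R$ and the error $c_k = 2(\gamma_k+\sigma(\epsilon_k))$, which is exactly the first claimed radius.

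For the reversed ordering $T_{PR,\tau f,\g_k}$, the inexact reflection comes first, and $R_{\prox_{\tau f}}$ is then applied to its output. The point $p^*$ now has to be a fixed point of $R_{\prox_{\tau f}}\circ R_{\prox_\phi}$. I write
\[
\Vert x_{k+1}-p^*\Vert \le L_R\,\Vert (2\g_k-\Id)(x_k) - (2\prox_\phi-\Id)(p^*)\Vert \le L_R\bigl((2L_\g+1)\Vert x_k-p^*\Vert + 2\gamma_k + 2\sigma(\epsilon_k)\bigr),
\]
after inserting $\g_k(p^*)$ in the same way. This gives the same contraction constant and error $2L_R(\gamma_k+\sigma(\epsilon_k))$. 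The radius stated in the proposition uses $\epsilon$ where $\sigma(\epsilon)$ is expected; I would read that as $\sigma(\epsilon)$, which coincides with $\epsilon$ for type (a) approximations. Finally, the limit point in each case is related to the minimizer via $\prox_{\tau f}(p^*)$ (respectively $\prox_\phi(p^*)$). Since these are Lipschitz maps, convergence of $x_k$ to $p^*$, or to a ball around it, transfers to the shadow sequence, possibly at the cost of a factor $L_{\prox}$.

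The main obstacle is identifying the right reference point. The proposition speaks of the ``minimizer of $f+\phi/\tau$'', but the iteration actually converges to a fixed point of the exact PR operator, and that operator's fixed point is the minimizer only after applying the appropriate resolvent. Its existence also has to be justified when $\phi$ is only weakly convex. I would handle this in two steps. First, $(2L_\psi+1)L_R<1$ makes $T^*$ a strict contraction, so Banach's theorem yields $p^*$. Second, the fixed-point equation unwinds to $0\in\nabla f(z)+\partial\phi(z)/\tau$ with $z=\prox_{\tau f}(p^*)$, which is the critical point and, by strong convexity of the sum, the minimizer. A secondary issue is that $L_\g\ge L_\psi$ is needed so that the exact operator is also covered by the contraction hypothesis; this holds for all approximation types in Table \ref{tab:summary_approx}.
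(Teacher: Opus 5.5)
Your argument is correct and is what the paper's one-line proof (``similar steps to the proof of Proposition~\ref{prop:proximal_point}'') intends: a one-step bound $\Vert x_{k+1}-p^*\Vert\le (2L_\g+1)L_R\Vert x_k-p^*\Vert+c_k$ against a fixed point $p^*$ of the exact Peaceman--Rachford operator, unrolled as in the proximal point case; your distinction between $p^*$ and the minimizer $\prox_{\tau f}(p^*)$ (resp.\ $\prox_\phi(p^*)$), and your reading of $\epsilon$ as $\sigma(\epsilon)$ in the second radius, correctly sharpen the paper's loosely worded conclusion. One small correction: you do not need $L_\g\ge L_\psi$, which is not guaranteed for type (d) since the paper itself exhibits $L_\epsilon<L_\psi$, because the one-step estimate never uses contractivity of the exact operator and you can take $p^*=z^*+\tau\nabla f(z^*)$ (resp.\ $p^*=z^*-\tau\nabla f(z^*)$) directly from the minimizer $z^*$ that the statement presupposes.
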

\begin{proof}
    Similar steps to the proof of Proposition \ref{prop:proximal_point}.
\end{proof}
\begin{proposition}{Convergence of Douglas-Rachford splitting.} Suppose that Assumption \ref{ass:strong_convexity} holds. Let $(\g_k)_{k\in\mathbb{N}}$ be a sequence of $(\epsilon_k)$-approximations of $\prox_\phi$ with $(L_\g,\gamma_k)$-Lipschitzness. Then, the sequence $(x_k)_{k\in \mathbb{N}}$ generated by $x_0 \in \Hi$ and $x_{k+1} = T_{DR,\g_k,\tau f}(x_k)$ with $\tau >0$ converges to
    \begin{enumerate}
        \item a ball of radius $\frac{\gamma+\sigma(\epsilon)}{1-L_{DR,\g,f}}$ around the minimizer of $f+ \phi/\tau$, if $\gamma(x_k)\leq \gamma$ and $\epsilon_k \leq \epsilon$ for all $k$,
        \item the minimizer of $f+ \phi/\tau$, if $\lim_{k\to+\infty}\gamma(x_k)+\sigma(\epsilon_k)=0$, %
    \end{enumerate}
    if 
    \begin{equation*}
        L_{DR,\g,f} := (L_\g+1/2) L_R +1/2<1.
    \end{equation*}
    And, the sequence $(x_k)_{k\in \mathbb{N}}$ generated by $x_0 \in \Hi$ and $x_{k+1} = T_{DR,\tau f,\g_k}(x_k)$ with $\tau>0$ converges to
    \begin{enumerate}
        \item a ball of radius $\frac{L_R (\gamma+\epsilon)}{1-L_{DR,\g,f}}$ around the minimizer of $f+ \phi/\tau$, if $\gamma(x_k)\leq \gamma$ and $\epsilon_k \leq \epsilon$ for all $k$,
        \item the minimizer of $f+ \phi/\tau$, if $\lim_{k\to+\infty}\gamma(x_k)+\sigma(\epsilon_k)=0$. %
    \end{enumerate}
\end{proposition}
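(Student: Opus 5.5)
The plan is to copy the argument of Proposition \ref{prop:proximal_point}: write the Douglas--Rachford iteration as a perturbed fixed-point iteration of an exact operator with Lipschitz constant below $1$, and unroll the recursion. Let $x^*$ be a fixed point of the \emph{exact} Douglas--Rachford operator built from $\prox_{\tau f}$ and $\prox_\phi$, so that $\prox_{\tau f}(x^*)$, or $\prox_\phi(x^*)$ for the other ordering, is the minimizer of $f+\phi/\tau$ in the paper's normalization. Write $R_f:=2\prox_{\tau f}-\Id$. Under Assumption \ref{ass:strong_convexity}, $R_f$ is $L_R$-Lipschitz with $L_R<1$ for a suitable $\tau$, by the classical reflected-resolvent bound of \cite{ryu2016primer}. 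Then everything reduces to one-step estimates of the form
\begin{equation*}
\Vert x_{k+1}-x^*\Vert \leq L_{DR,\g,f}\Vert x_k-x^*\Vert + c\,(\gamma_k+\sigma(\epsilon_k)),
\end{equation*}
with an explicit constant $c$.

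\emph{Ordering $T_{DR,\g_k,\tau f}$.} I would decompose
\begin{equation*}
T_{DR,\g_k,\tau f}(x_k)-T_{DR}(x^*)=\tfrac12(x_k-x^*)+\tfrac12\bigl[(2\g_k-\Id)(R_f x_k)-(2\prox_\phi-\Id)(R_f x^*)\bigr],
\end{equation*}
and insert $\pm(2\g_k-\Id)(R_f x^*)$ in the bracket. The $(L_\g,\gamma_k)$-Lipschitzness of $\g_k$ makes $2\g_k-\Id$ a $(2L_\g+1,2\gamma_k)$-Lipschitz map, so the first difference is at most $(2L_\g+1)L_R\Vert x_k-x^*\Vert+2\gamma_k$. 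The second difference is $2\Vert\g_k(R_fx^*)-\prox_\phi(R_fx^*)\Vert\le 2\sigma(\epsilon_k)$ by Definition \ref{def:qualitative}. After the factor $\tfrac12$ this gives
\begin{equation*}
\Vert x_{k+1}-x^*\Vert\le\bigl(\tfrac12+(L_\g+\tfrac12)L_R\bigr)\Vert x_k-x^*\Vert+\gamma_k+\sigma(\epsilon_k),
\end{equation*}
which is exactly $L_{DR,\g,f}$ and the claimed radius numerator.

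\emph{Ordering $T_{DR,\tau f,\g_k}$.} Here the error enters inside $R_f$. Inserting $\pm R_f\bigl((2\g_k-\Id)(x^*)\bigr)$, the Lipschitz part gives the same factor $(2L_\g+1)L_R$ together with a slack $L_R\cdot2\gamma_k$. The approximation part gives $L_R\cdot 2\Vert\g_k(x^*)-\prox_\phi(x^*)\Vert$. After halving, the additive term is $L_R(\gamma_k+\sigma(\epsilon_k))$; this matches the stated $L_R(\gamma+\epsilon)$ when $\sigma(\epsilon)=\epsilon$, and otherwise I would write it with $\sigma(\epsilon)$. Note that $L_{DR,\g,f}<1$ forces $L_R<1$.

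\emph{Conclusion.} With $q:=L_{DR,\g,f}<1$, unrolling gives
\begin{equation*}
\Vert x_{k+1}-x^*\Vert\le q^{k+1}\Vert x_0-x^*\Vert+c\sum_{j=0}^k q^j(\gamma_{k-j}+\sigma(\epsilon_{k-j})).
\end{equation*}
For bounded errors, the geometric sum yields the ball of radius $c(\gamma+\sigma(\epsilon))/(1-q)$. For vanishing errors, the Toeplitz-type splitting argument of Proposition \ref{prop:proximal_point} (tail of $\sum q^j$ small, head controlled by the vanishing errors) shows the sum tends to $0$.

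\emph{Main obstacle.} The estimates converge to the fixed point $x^*$ of the DR operator, not to the minimizer itself. To state the result around the minimizer, I would transfer through the shadow map: $\prox_{\tau f}$ is $1$-Lipschitz, and $\prox_\phi$ is $L_\psi$-Lipschitz for the other ordering. This only rescales the radius by a bounded constant, and the claim should be read through this map. A second point is that the bound $L_R<1$ depends on the choice of $\tau$, and I would fix $\tau$ as in \cite{ryu2016primer}.
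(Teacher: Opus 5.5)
Your proposal is correct and follows the route the paper intends; its proof only says ``similar steps to the proof of Proposition~\ref{prop:proximal_point}''. That route is a one-step bound $\Vert x_{k+1}-x^*\Vert\le L_{DR,\g,f}\Vert x_k-x^*\Vert + c\,(\gamma_k+\sigma(\epsilon_k))$, obtained by inserting the exact operator at $x^*$, followed by the same unrolling and head/tail splitting argument, and your constants $c=1$ and $c=L_R$ check out. Your caveats are also justified: the iterates approach the exact DR fixed point (e.g.\ $x^*=\bar x+\tau\nabla f(\bar x)$ for the first ordering, with $\bar x$ the minimizer), not $\bar x$ itself, so ``minimizer'' must be read through the shadow sequence, and the second radius should carry $\sigma(\epsilon)$ rather than $\epsilon$ unless $\sigma(\epsilon)=\epsilon$.
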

\begin{proof}
    Similar steps to the proof of Proposition \ref{prop:proximal_point}.
\end{proof}
\begin{remark}
    In contrast with what is known about proximal splitting algorithms \cite{briceno2023theoretical}, the forward-backward splitting algorithm has here the better convergence rates (as it can be seen in Figure \ref{fig:Lipschitz_constants}). This is a consequence of the averaged property of the proximal operator \cite{bauschke2017}, which can be exploited in the exact setting to obtain better rates \cite{briceno2023theoretical}. To the best of our knowledge, we should expect (with similar inexactness) these results to hold in inexact settings, but it has not been shown yet.
\end{remark}
In order to obtain an approximate solution of the original problem, we can set $\tau =1$. In all cases, this result is the first to provide a relationship between the fixed point obtained by the inexact but non-summable optimization method and its exact counterpart.

Also, the convergence results presented could probably be improved by adding additional structure on the approximations. For instance, approximations of type (d) are not assumed to be derived from a convex $\psi_\epsilon$. If we add this additional constraint, then $\nabla \psi_\epsilon$ is the proximal operator of a functional $\Tilde{\phi}$. If it is nonexpansive then convex analysis tools are available, otherwise one can use the Kurdyka-Łojasiewicz \cite{attouch2009convergence,bolte2010characterizations} to show the convergence and recover the approximate solution property using the bound $\Vert \g-\prox_\phi \Vert \leq \sigma(\epsilon)$. 

\section{Conclusion}
In this article, we conducted an exhaustive analysis of the modelling of inexactness of inexact proximal operators. We established their regularity and their approximation power with respect to the true proximal operator but also with respect to minimizers of $\phi$. An interesting consequence is that with these properties we highlighted that in a convex setting, estimating the proximal operator through the approximate subdifferential (approximations of type (c)) is the correct path if the errors can be controlled reliably (as it is done in \cite{salzo2012inexact,villa2013accelerated}). Another consequence is that when learning a proximal operator, the common approach consisting in constraining approximatively the Lipschitz constant of the approximant to be below $1$ in all cases, is detrimental to the quality of the solution obtained by the subsequent algorithms (approximations of type (d)). It remains to quantify how detrimental this is.

This analysis then allowed us to derive some new convergence results for proximal splitting algorithms in the presence of non-summable errors. We managed to show that the summability of the errors is not a necessary condition in all cases to obtain convergence, and that vanishing errors may be sufficient. We also managed to prove that convergence was still possible on non-convex problems without contractivity assumptions on the approximation of the proximal operator.

Nevertheless, we did not answer all of our introducting questions. Indeed, if the errors cannot be controlled reliably during the optimization, how can we estimate the quality of our approximate solutions? As an example, in typical learning situation (approximations of type (d)), we do not have access to the ground truth, therefore we do not have access to the error $\epsilon$. In such cases, it possible to estimate \emph{a posteriori} the error we made?

From a more optimization perspective, extension of these results to inertial methods would be of great interest, as we know that these methods have higher sensitivity to accumulated errors. Also, a drawback of our analysis is that convergence is ensured through a fixed point lens, which requires significantly stronger assumptions than typical descent methods, especially in a non-convex setting \cite{bolte2007lojasiewicz,bolte2014proximal,attouch2010proximal,attouch2013convergence}. Therefore an improvement of this work could be obtained by reducing the gap between these two ways of analyzing proximal algorithms. 

Finally, generalized proximal operators (with Bregman divergence) have been characterized in \cite{gribonval2020characterization}. Considering inexactness in this setting is a difficult task \cite{yang2022bregman}, and thus an extension of our work to this setting would be of great interest.

\bibliographystyle{plain}
\bibliography{references}

\begin{appendices}
\section{Additional results}
    \subsection{Examples of admissibility} \label{app:admissible_examples}
    In order to show that Definition \ref{def:admissible} is a reasonable assumption, we construct approximations of type (a), (b), and (d) of the proximal operators of the $\ell_2-$norm.

\paragraph{The example of the $\ell_2$-norm.}
We set $\phi := \Vert \cdot \Vert_2$. Its proximal operator of parameter $\gamma>0$ is given for all $x \in \Hi$ by
\begin{equation}
    \prox_{\gamma \phi}(x)=\left(1 - \frac{\gamma}{\max(\Vert x \Vert_2, \gamma)} \right)x
\end{equation}
We have $\Fix \prox_\phi = 0$. Moreover, $\psi$ is defined as
\begin{equation}
    \psi(x) =
    \begin{cases}
        C & \Vert x \Vert_2 \leq \gamma \\
        \frac{1}{2} \Vert x \Vert^2_2 - \gamma\Vert x \Vert_2 +C & \Vert x \Vert_2 > \gamma,
    \end{cases}
\end{equation}
with some $C>0$.  The $\epsilon$-subdifferential of $\phi$ is given by
\begin{lemma}
    The $\epsilon$-subdifferential of $\Vert . \Vert_2$ is
    \begin{equation}
        (\forall x \in \Hi), \quad \partial_\epsilon \Vert \cdot \Vert_2 (x) = \{s \mid \Vert s \Vert_2 \leq 1, ~ \Vert x \Vert_2 - \langle s,x \rangle \leq \epsilon \}.
    \end{equation}        
\end{lemma}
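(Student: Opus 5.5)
Write $\phi=\Vert\cdot\Vert_2$ and unfold the definition of $\partial_\epsilon\phi$. By definition, $s\in\partial_\epsilon\phi(x)$ if and only if
\begin{equation*}
(\forall y\in\Hi),\quad \Vert y\Vert_2 \geq \Vert x\Vert_2+\langle s,y-x\rangle-\epsilon .
\end{equation*}
This can be rearranged as
\begin{equation*}
\sup_{y\in\Hi}\bigl(\langle s,y\rangle-\Vert y\Vert_2\bigr)+\Vert x\Vert_2-\langle s,x\rangle\leq\epsilon,
\end{equation*}
that is, $\phi^*(s)+\phi(x)-\langle s,x\rangle\leq\epsilon$. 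This is the Fenchel--Young gap characterization of the $\epsilon$-subdifferential (cf.\ \cite{hiriart-urruty1993convexII}). So the proof reduces to computing the conjugate of the Euclidean norm and substituting it.

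The conjugate is the indicator of the closed unit ball. If $\Vert s\Vert_2\leq1$, Cauchy--Schwarz gives $\langle s,y\rangle-\Vert y\Vert_2\leq0$, with equality at $y=0$, so the supremum is $0$. If $\Vert s\Vert_2>1$, take $y=ts$ with $t\to+\infty$: then $\langle s,y\rangle-\Vert y\Vert_2=t\Vert s\Vert_2(\Vert s\Vert_2-1)\to+\infty$.

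Substituting this into the gap inequality proves both inclusions at once. The condition holds if and only if $\Vert s\Vert_2\leq1$ (otherwise the left-hand side is $+\infty>\epsilon$) and $\Vert x\Vert_2-\langle s,x\rangle\leq\epsilon$. This is exactly the claimed set.

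No step is a real obstacle. The only care needed is to state both directions explicitly, i.e.\ that the rearrangement into a supremum is an equivalence. One may also note that the condition $\Vert x\Vert_2-\langle s,x\rangle\geq0$ holds automatically on the ball, which is a useful sanity check.
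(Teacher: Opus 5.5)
The paper states this lemma without proof, so there is no argument of its own to compare with. Your derivation is correct and complete, and it is the standard one. The defining inequality is equivalent to the Fenchel--Young gap condition $\phi^*(s)+\phi(x)-\langle s,x\rangle\le\epsilon$. You then identify $\phi^*$ as the indicator of the closed unit ball, using Cauchy--Schwarz when $\Vert s\Vert_2\le 1$ and the ray $y=ts$ when $\Vert s\Vert_2>1$, which settles both inclusions at once.

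One remark on your first step: you took the definition in the standard form $\Vert y\Vert_2\ge\Vert x\Vert_2+\langle s,y-x\rangle-\epsilon$ for all $y$. That is the right choice, and it is the form the paper actually uses in the proof of Lemma~\ref{lm:espilon_subdiff_smooth_bound}. The displayed definition of $\partial_\epsilon\phi$ in Section~2, however, has the roles of the two points swapped. Read literally, it would give $\partial_\epsilon\Vert\cdot\Vert_2(x)=\emptyset$ for every $x$. So your ``by definition'' should be read against the standard definition.
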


\begin{itemize}
    \item Take $\g_a$ an $\epsilon$-approximation of type (a) of $\prox_\phi$ as
    \begin{equation}
        \g_a(x) = \prox_\phi(x) + e
    \end{equation}
    where $\Vert e \Vert_2 \leq \epsilon$.
    \item Take $\g_b$ an $\epsilon$- approximation of type (b) of $\prox_\phi$ as
\begin{equation}
    \g_b(x) = \prox_\phi(x + r)
\end{equation}
where $\Vert r \Vert_2 = \epsilon$.
    \item Take $\g_d$ an $\epsilon$-approximation of type (d) of $\prox_\phi$ as
    \begin{equation}
        \psi_\epsilon(x) = \psi(x) +\epsilon \exp\left(-\frac{\Vert x-e \Vert^2}{2}\right)
    \end{equation}
    with $\Vert e \Vert = \epsilon$. Hence,
    \begin{equation}
        \g_d(x) = \left(1 - \frac{\gamma}{\max(\Vert x \Vert_2, \gamma)} \right)x - \epsilon (x-e) \exp \left(-\frac{\Vert x-e \Vert^2}{2} \right).
    \end{equation}
\end{itemize}

\paragraph{Approximation of type (a)}
\begin{proposition}
    The fixed point of $\g_a$, the $\epsilon$-type (a) approximation of $\prox_\phi$ are
    \begin{equation}
\Fix \g_a=
\begin{cases}
\{e\}, & \epsilon <\gamma,\\
\{\alpha e | \alpha\geq 1\}, & \epsilon =\gamma,\\
\varnothing, & \epsilon >\gamma.
\end{cases}
\end{equation}
\end{proposition}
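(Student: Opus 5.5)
The plan is to turn the fixed-point equation into a statement about the projection onto the ball of radius $\gamma$. By Moreau's decomposition, or directly from the closed form of $\prox_{\gamma\phi}$ given above,
\begin{equation*}
x-\prox_{\gamma \phi}(x) = \frac{\gamma}{\max(\Vert x \Vert_2,\gamma)}\,x = P_{\mathbb{B}(0,\gamma)}(x),
\end{equation*}
the Euclidean projection of $x$ onto the closed ball of radius $\gamma$. The additive error $e$ is the constant vector from the construction. I read the case distinction in the statement as referring to $\Vert e\Vert_2=\epsilon$, as for $\g_b$ and $\g_d$. If only $\Vert e \Vert_2\le\epsilon$ is assumed, the same argument goes through with $\Vert e\Vert_2$ in place of $\epsilon$ in the case split.

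The first step is the reformulation
\begin{equation*}
x\in\Fix \g_a \iff x = \prox_{\gamma\phi}(x)+e \iff P_{\mathbb{B}(0,\gamma)}(x) = e.
\end{equation*}
So $\Fix \g_a$ is exactly the preimage of the single point $e$ under the projection onto the ball. The second step is to compute this preimage, splitting into three cases according to where $e$ lies relative to the ball.

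\emph{Case $\epsilon<\gamma$.} Here $e$ is an interior point of $\mathbb{B}(0,\gamma)$. If $\Vert x\Vert_2>\gamma$, the projection $\gamma x/\Vert x\Vert_2$ lies on the sphere, so it cannot equal $e$. Hence $\Vert x\Vert_2\le\gamma$, the projection is $x$ itself, and $x=e$. The fixed point set is $\{e\}$.

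\emph{Case $\epsilon=\gamma$.} Now $e$ lies on the sphere. If $\Vert x\Vert_2\le\gamma$, then $x=e$. If $\Vert x\Vert_2>\gamma$, the equation $\gamma x/\Vert x\Vert_2=e$ says $x$ is a positive multiple of $e$ with norm larger than $\gamma$, that is, $x=\alpha e$ with $\alpha>1$. Conversely, every $x=\alpha e$ with $\alpha\ge1$ is checked to satisfy the equation. This gives $\{\alpha e \mid \alpha\ge 1\}$, the normal cone of the ball at $e$ translated by $e$.

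\emph{Case $\epsilon>\gamma$.} The range of the projection is contained in $\mathbb{B}(0,\gamma)$, which does not contain $e$. So there is no solution and $\Fix \g_a=\varnothing$.

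I do not expect a real obstacle. The only delicate point is the boundary case $\epsilon=\gamma$: there the two branches of the formula for $\prox_{\gamma\phi}$ must be treated separately, to confirm that $\alpha=1$ and every $\alpha>1$ all give fixed points and that nothing else does. This uses that $\gamma x/\Vert x\Vert_2=e$ forces $x$ to point in the direction of $e$.
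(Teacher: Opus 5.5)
Your proof is correct and follows the paper's route. Both arguments reduce $x=\g_a(x)$ to $e=\frac{\gamma}{\max(\Vert x\Vert_2,\gamma)}x$, which you recognize as the projection onto $\mathbb{B}(0,\gamma)$, and then do a case analysis: the paper splits on $\Vert x\Vert_2$, you split on $\Vert e\Vert_2=\epsilon$. Your reading $\Vert e\Vert_2=\epsilon$ is what the paper's proof implicitly uses, and your version handles the boundary case $\epsilon=\gamma$ more cleanly, since it also checks the inclusion $\{\alpha e\mid\alpha\ge1\}\subseteq\Fix\g_a$ explicitly.
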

\begin{proof}
    We have that $\g_a(x)= \prox_\phi +e$, hence
    \begin{equation}
        \g_a(x)=\left(1 - \frac{\gamma}{\max(\Vert x \Vert_2, \gamma)} \right)x +e
    \end{equation}
    The fixed point equation of $\g_a$ is 
    \begin{equation}
        x = \left(1 - \frac{\gamma}{\max(\Vert x \Vert_2, \gamma)} \right)x +e,
    \end{equation}
    which is equivalent to
    \begin{equation}
        e = \frac{\gamma}{\max(\Vert x \Vert_2,\gamma)}x.
    \end{equation}
    \textbullet Case 1: $\Vert x \Vert_2 < \gamma$. It implies $x=e$. Thus, if $\Vert e \Vert=\epsilon < \gamma$, the unique fixed point of $\g$ is $e$.
    \textbullet Case 2: $\Vert x \Vert_2 = \gamma$. Hence,
    \begin{equation}
        x = \frac{\Vert x \Vert_2}{\gamma} e \implies \Vert e \Vert_2 = \gamma \implies x = \alpha e, \quad \alpha \geq 1.
    \end{equation} 
    \textbullet Case 3 $\Vert x \Vert_2 >\gamma$. Again 
    \begin{equation}
        e = \frac{\gamma}{\max(\Vert x \Vert_2,\gamma)}x \implies \Vert e \Vert = \gamma.
    \end{equation}
    Hence, it forces to fall back on case 2. There is no fixed point if $\epsilon>\gamma$.
\end{proof}

\paragraph{Approximation of type (b)}
\begin{proposition}
    The fixed point of $\g_b$, the $\epsilon$-type (b) approximation of $\prox_\phi$ are
    \begin{equation}
\Fix \g_b=
\begin{cases}
\{0\}, & \epsilon <\gamma,\\
\{\alpha r | \alpha\geq 0\}, & \epsilon =\gamma,\\
\varnothing, & \epsilon >\gamma.
\end{cases}
\end{equation}
\end{proposition}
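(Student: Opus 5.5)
The plan is to write the fixed-point equation of $\g_b$ explicitly and split it into cases according to which branch of the closed form of $\prox_{\gamma\phi}$ is active at the shifted point $x+r$. This mirrors the proof given just above for $\g_a$, except that the case distinction is now made on $\Vert x+r\Vert_2$ rather than on $\Vert x\Vert_2$. Using the formula for $\prox_{\gamma\phi}$ with $\phi=\Vert\cdot\Vert_2$, a point $x$ is a fixed point of $\g_b$ if and only if
\begin{equation*}
x=\left(1-\frac{\gamma}{\max(\Vert x+r\Vert_2,\gamma)}\right)(x+r).
\end{equation*}

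\emph{First case: $\Vert x+r\Vert_2\leq\gamma$.} The prefactor vanishes, so the equation forces $x=0$. Consistency then requires $\Vert r\Vert_2=\epsilon\leq\gamma$. Hence $0$ is a fixed point exactly when $\epsilon\leq\gamma$, and in that case it is the only fixed point coming from this branch.

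\emph{Second case: $\Vert x+r\Vert_2>\gamma$.} The equation becomes $x=x+r-\gamma\frac{x+r}{\Vert x+r\Vert_2}$, which is equivalent to
\begin{equation*}
r=\gamma\,\frac{x+r}{\Vert x+r\Vert_2}.
\end{equation*}
Taking norms gives $\Vert r\Vert_2=\gamma$, so this branch can only contribute when $\epsilon=\gamma$. When $\epsilon=\gamma$, the equation says that $x+r$ is a positive multiple of $r$, say $x+r=tr$ with $t>0$. The branch condition $t\Vert r\Vert_2>\gamma$ then becomes $t>1$, so $x=(t-1)r=\alpha r$ with $\alpha>0$. Conversely, every such $x$ satisfies the equation.

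Collecting the two cases gives the statement. For $\epsilon<\gamma$ only the first case contributes, giving $\{0\}$. For $\epsilon=\gamma$ both cases contribute, giving $\{0\}\cup\{\alpha r\mid \alpha>0\}=\{\alpha r\mid\alpha\geq0\}$. For $\epsilon>\gamma$ the first case fails because $\Vert r\Vert_2>\gamma$, and the second fails because it would require $\Vert r\Vert_2=\gamma$, so $\Fix\g_b=\varnothing$.

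The computations are elementary. The only points needing care are the boundary situations: $\Vert x+r\Vert_2=\gamma$ must be placed in the first branch (where the prefactor is zero), and the converse check in the case $\epsilon=\gamma$ must confirm that $\alpha=0$ is accounted for by the first branch and not the second.
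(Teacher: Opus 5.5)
Your proof is correct and follows the paper's argument: the same case split on $\Vert x+r\Vert_2$ relative to $\gamma$, the same reduction to $r=\gamma(x+r)/\Vert x+r\Vert_2$ in the second branch (forcing $\Vert r\Vert_2=\gamma$), and the same parametrization $x+r=tr$. The only difference is cosmetic: you make the branches disjoint by placing $\Vert x+r\Vert_2=\gamma$ in the first one, whereas the paper lets both branches include the boundary, so that $\alpha=0$ also arises in its second case via $t\geq 1$.
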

\begin{proof}
    We have that $\g_b(x) = \prox_\phi(x+r)$, hence
    \begin{equation}
        \g_b(x) = \left( 1 - \frac{\gamma}{\max(\Vert x +r \Vert_2,\gamma )} \right) (x + r).
    \end{equation}
    If $\epsilon \leq \gamma$ then 
    \begin{equation}
        \g_b(0) = 0.
    \end{equation}
    For $\epsilon$ sufficiently small, the unique fixed point of $\prox_\phi$ is a fixed point of $\g_b$. Now let us look at other fixed points of $\g_b$. The fixed point equation of $\g_b$ is
    \begin{equation}
         x = \left( 1 - \frac{\gamma}{\max(\Vert  x +r \Vert_2,\gamma )} \right) ( x + r).
    \end{equation}
    Set
\begin{equation}
y = x + r.
\end{equation}
Then the fixed-point equation becomes
\begin{equation} 
x = \left(1 - \frac{\gamma}{\max(\Vert y\Vert ,\gamma)}\right)y,
\quad
x = y - r.
\end{equation}
\textbullet Case 1: $\Vert y\Vert \leq \gamma$.
Thus, $x=0$. For the case to hold it requires $\Vert 0+r\Vert  = \Vert r \Vert  \leq \gamma$. Hence if $\epsilon <\gamma$ the unique fixed point of $\g_b$ is
\begin{equation}
\bar x = 0.
\end{equation}
\textbullet Case 2: $\Vert y\Vert \geq \gamma$. We have
\begin{equation}
y - r = \left(1 - \frac{\gamma}{\Vert y\Vert }\right)y
\quad\implies\quad
-r = -\frac{\gamma}{\Vert y\Vert }y
\quad\implies\quad
r = \frac{\gamma}{\Vert y\Vert }y.
\end{equation}
Thus $y$ is scaling of the error component $r$ and taking norms gives $\Vert r\Vert =\gamma$.
Let $y = t~r$ with $t\geq 1$; then
\begin{equation}
x = y - r = (t-1)r,
\end{equation}
and one checks that $_b(x)=x$. Hence if $\epsilon =\gamma$, every
\begin{equation}
\bar x = \alpha r,\quad \alpha\geq 0,
\end{equation}
is a fixed point including $\bar x=0$. Now if we suppose that $\Vert r\Vert >\gamma$, case 1 cannot hold as it would force $x=0$ while $\Vert r\Vert >\gamma$ (a contradiction),
and case 2 forces $\Vert r\Vert =\gamma$, again a contradiction. Hence if $\epsilon >\gamma$ there is no fixed point. 
To conclude, the fixed points of $\g_b$ are thus
\begin{equation}
\Fix \g_b=
\begin{cases}
\{0\}, & \epsilon <\gamma,\\
\{\alpha r | \alpha\geq 0\}, & \epsilon =\gamma,\\
\varnothing, & \epsilon >\gamma.
\end{cases}
\end{equation}

\end{proof}

\paragraph{Approximation of type (d).}
\begin{proposition}
    There exists fixed point of $\g_d$ if $\epsilon \leq \gamma$.
\end{proposition}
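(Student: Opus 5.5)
The plan is to reduce the fixed-point problem for $\g_d$ to a one-dimensional equation along the direction of $e$, as in the squared $\ell_2$ example, and then conclude with the intermediate value theorem. Recall
\[
\g_d(x) = \left(1 - \frac{\gamma}{\max(\Vert x \Vert_2, \gamma)} \right)x - \epsilon (x-e) \exp \left(-\frac{\Vert x-e \Vert^2}{2} \right).
\]
Both terms are parallel to $x$ or to $x-e$. So if $x = te$ with $t\in\RR$, then $\g_d(x)$ is again a multiple of $e$. I therefore look for fixed points of the form $x=te$ and reduce to a scalar equation $h(t)=0$. If $e=0$ (the case $\epsilon=0$), $\g_d = \prox_\phi$ and $0$ is a fixed point, so assume $\epsilon>0$.

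The key observation is that on the ball $\Vert x\Vert_2\le\gamma$ the prox part vanishes, so there $\g_d(x) = -\epsilon(x-e)\exp(-\Vert x-e\Vert^2/2)$. With $x=te$ and $\Vert e\Vert=\epsilon$, the condition $\Vert te\Vert\le\gamma$ reads $|t|\le \gamma/\epsilon$. Since $\epsilon\le\gamma$, this includes all $t\in[-1,1]$. On that range the fixed-point equation $te = -\epsilon(t-1)e\,\exp(-(t-1)^2\epsilon^2/2)$ becomes
\[
h(t) := t + \epsilon (t-1)\exp\!\left(-\tfrac{(t-1)^2\epsilon^2}{2}\right) = 0 .
\]

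The function $h$ is continuous. At $t=1$ we have $h(1)=1>0$. At $t=0$ we have $h(0) = -\epsilon e^{-\epsilon^2/2}<0$. The intermediate value theorem then gives $t^*\in(0,1)$ with $h(t^*)=0$. Because $|t^*|<1\le\gamma/\epsilon$, the point $t^*e$ lies in the region $\Vert x\Vert\le\gamma$ where the reduced formula is valid, so $x^*=t^*e$ is a fixed point of $\g_d$. Moreover $\Vert x^*\Vert < \epsilon$, so the fixed point lies within $\epsilon$ of $0=\Fix\prox_\phi$, which is the admissibility claim.

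The only delicate step is checking that the candidate segment stays inside the ball $\Vert x\Vert\le\gamma$. This is exactly where the hypothesis $\epsilon\le\gamma$ is used, and I expect no further obstacle.
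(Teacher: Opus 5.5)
Your proof is correct and follows the paper's argument. You restrict to $x=te$ inside the ball $\Vert x\Vert\le\gamma$, where the prox term vanishes, apply the intermediate value theorem to the scalar equation on $(0,1)$, and use $\epsilon\le\gamma$ to keep $t^*e$ in that ball. Your exponent $(t-1)^2\epsilon^2/2$ is the correct one: the paper writes $(t+1)^2$, a slip that does not change the signs at $t=0,1$. Your explicit check that $\Vert t^*e\Vert<\epsilon\le\gamma$ also makes precise the paper's remark that the root is a fixed point ``if for instance $\epsilon\le\gamma$''.
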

\begin{proof}
    We have 
    \begin{equation}
        \g_d(x) = \left( 1 - \frac{\gamma}{\max(\Vert x \Vert, \gamma)} \right) x - \epsilon (x - e) \exp\left( - \frac{\Vert x- e \Vert^2}{2} \right)
    \end{equation}
    Hence the fixed points are solutions to the equation
    \begin{equation}
        x = \frac{\epsilon \exp\left( - \frac{\Vert x- e \Vert^2}{2} \right)}{\gamma / \max(\Vert x \Vert, \gamma) + \exp\left( - \frac{\Vert x- e \Vert^2}{2} \right)} e
    \end{equation}
    All solutions are of the form $x = te$ with $t \in \RR$. Assume for now that $t\geq 0$ and that $\Vert x \Vert \leq \gamma$. We have:
    \begin{equation}
        t + \epsilon (t-1) \exp\left( - \frac{(t+1)^2\epsilon^2}{2} \right) = 0
    \end{equation}
    Denote by $f(t)$ the left hand side of this equation. $f$ is continuous and $f(0) <0$, $f(1)>0$. Hence by the intermediate value theorem there exists a solution of this equation in $(0,1)$ which is a fixed point if for instance $\epsilon \leq \gamma$. No negative solution exists as for all $t<0$, $f(t) <0$. If $\Vert x \Vert > \gamma$ then the equation becomes
    \begin{equation}
        \gamma + (t-1) \exp\left( - \frac{(t+1)^2\epsilon^2}{2} \right) = 0
    \end{equation}
    Consider
\begin{equation}
f(t)= (t-1)\exp \Bigl(-\frac{\epsilon^{2}}{2}(t+1)^{2}\Bigr).
\end{equation}
We want $t$ such that $f(t)=-\gamma$
has solutions.
\begin{equation}
\lim_{t\to\pm\infty} f(t)=0,
\quad
f(1)=0.
\end{equation}
Moreover,
\begin{equation}
f(t)<0 \quad\text{for } t<1, \qquad
f(t)>0 \quad\text{for } t>1.
\end{equation}
Hence, possible solutions lie in $(-\infty,1)$. To identify them, we compute the minimum value of $f$.
Let $a=\varepsilon^{2}/2$. Then
\begin{equation}
f'(t) = e^{-a(t+1)^{2}}\bigl[1-2a(t-1)(t+1)\bigr] = e^{-a(t+1)^{2}}\bigl[1-\varepsilon^{2}(t^{2}-1)\bigr].
\end{equation}
Setting $f'(t)=0$ gives
\begin{equation}
1-\varepsilon^{2}(t^{2}-1)=0
\quad\Longleftrightarrow\quad
t^{2}=1+\frac{1}{\varepsilon^{2}}.
\end{equation}
Thus there are two critical points
\begin{equation}
t_{\pm}=\pm\sqrt{1+\frac{1}{\varepsilon^{2}}}.
\end{equation}
We can classify them with the sign of $f''$. Since
\begin{equation}
f''(t)=e^{-a(t+1)^{2}}\Bigl[-2a(t+1)\bigl(1-\varepsilon^{2}(t^{2}-1)\bigr)
-2\varepsilon^{2}t\Bigr],
\end{equation}
and at the critical points \(1-\varepsilon^{2}(t^{2}-1)=0\), we get
\begin{equation}
f''(t_{\pm})
= e^{-a(t_{\pm}+1)^{2}}\bigl[-2\varepsilon^{2}t_{\pm}\bigr].
\end{equation}
$f''(t_-)>0$, hence $t_-$ is a global minimum. While $f''(t_+)<0$, which makes $t_+$ a global maximum. We are interested in the minimum value $f_{\min}$ which is:
\begin{equation}
f_{\min}=f(t_{-})
=(t_- - 1)\exp\!\Bigl(-\tfrac{\varepsilon^{2}}{2}(t_- -1)^{2}\Bigr) <0.
\end{equation}
Therefore there exists a solution if $\gamma \leq - f_{\min}$.
\end{proof}

\subsection{Some properties of the proximal operator.}
The following elementary result can be found for convex function in \cite{combettes2011proximal}, without proof, and we could not find any so we state it below, with a weak convexity assumption. With this property, one can "transfer" the weak convexity to a gradient term, and thus approximate the proximal operator of a convex penalty.
\begin{proposition}
    Let $\phi$ be a proper, l.s.c. function. Let $\gamma>0$ and $\alpha>0$. Then,
    \begin{equation}
        (\forall z \in \Hi), \quad \prox_{\gamma (\phi + \alpha \Vert \cdot \Vert^2)}(z) = \prox_{\gamma \phi/(2\alpha\gamma+1)}\left(\frac{z}{2\alpha \gamma +1}\right).
    \end{equation}
\end{proposition}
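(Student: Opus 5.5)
The plan is to write down the minimization problem that defines the left-hand side and complete the square in the quadratic terms. We then recognize the result as the minimization defining the right-hand side, up to a positive multiplicative constant and an additive constant independent of the variable. Both operations leave the set of minimizers unchanged. The identity is therefore understood as an equality of (possibly set-valued) argmin sets, and no convexity of $\phi$ is needed.

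Fix $z \in \Hi$ and set $c := 2\alpha\gamma + 1 > 0$. By definition, $\prox_{\gamma(\phi+\alpha\Vert\cdot\Vert^2)}(z)$ is the argmin over $x$ of
\begin{equation*}
\phi(x) + \alpha\Vert x\Vert^2 + \frac{1}{2\gamma}\Vert x - z\Vert^2 .
\end{equation*}
Expanding, the quadratic part equals
\begin{equation*}
\frac{c}{2\gamma}\Vert x\Vert^2 - \frac{1}{\gamma}\langle x, z\rangle + \frac{1}{2\gamma}\Vert z\Vert^2 .
\end{equation*}
Completing the square, this becomes
\begin{equation*}
\frac{c}{2\gamma}\Big\Vert x - \frac{z}{c}\Big\Vert^2 + K(z),
\end{equation*}
where $K(z) = \frac{1}{2\gamma}\left(1 - \frac{1}{c}\right)\Vert z\Vert^2$ does not depend on $x$.

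The objective is then $\phi(x) + \frac{c}{2\gamma}\Vert x - z/c\Vert^2 + K(z)$. We drop the constant $K(z)$ and multiply by $1/c > 0$; neither operation changes the argmin. This leaves
\begin{equation*}
\frac{\phi(x)}{c} + \frac{1}{2\gamma}\Big\Vert x - \frac{z}{c}\Big\Vert^2 ,
\end{equation*}
whose argmin is by definition $\prox_{\gamma \phi / c}(z/c)$. This is exactly the stated right-hand side with $c = 2\alpha\gamma+1$.

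The only point needing care is to make sure the two argmin sets are compared as sets, including existence. Since the objectives differ by a positive factor and an additive constant, one is proper, bounded below or prox-bounded exactly when the other is. The two prox sets are therefore simultaneously empty or equal, and for any proper l.s.c. $\phi$ the equality holds as an identity of sets. When $\phi$ is $\rho$-weakly convex with $\rho$ small enough relative to $\gamma$, both sides are single-valued and the equality is pointwise. There is no real obstacle beyond bookkeeping the constant $c$ correctly.
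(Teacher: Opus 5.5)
Your proposal is correct and follows the same route as the paper: expand the quadratic terms, complete the square around $z/(2\alpha\gamma+1)$, and discard the $x$-independent constant and the positive rescaling, neither of which changes the argmin. Your added remark is also sound: it notes that the identity holds as an equality of possibly set-valued (or empty) argmin sets and needs no convexity, a point the paper leaves implicit.
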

\begin{proof}
    Let $z\in \Hi$, we have
    \begin{align*}
        \prox_{\gamma (\phi + \alpha \Vert \cdot \Vert^2)}(z) & = \argmin_{x\in \Hi} \phi(x) + \alpha \Vert x \Vert^2 + \frac{1}{2\gamma} \Vert x-z \Vert^2 \\
    & = \argmin_{x\in\Hi} \phi(x) + \frac{2\alpha\gamma+1}{\gamma}\left(\frac{1}{2}\Vert x \Vert^2 - \langle x,\frac{z}{2\alpha\gamma+1}\rangle + \frac{1}{2(2\alpha\gamma+1)}\Vert z \Vert^2 \right) \\
    & = \argmin_{x\in\Hi} \phi(x) + \frac{2\alpha\gamma+1}{\gamma}\left(\frac{1}{2}\Vert x \Vert^2 - \langle x,\frac{z}{2\alpha\gamma+1}\rangle + \frac{1}{2}\left\Vert \frac{z}{2\alpha\gamma+1} \right\Vert^2 \right) \\
    &+ \frac{2\alpha\gamma+1}{\gamma}\Vert z \Vert^2 \left(\frac{1}{2(2\alpha\gamma+1)}-\frac{1}{2(2\alpha\gamma+1)^2}\right)\\
    & = \argmin_{x\in\Hi} \phi(x) + \frac{2\alpha \gamma +1}{2\gamma}\left\Vert x- \frac{z}{2\alpha \gamma +1}\right\Vert^2.
    \end{align*}
\end{proof}

\paragraph{From weakly convex to convex proximal operator}
Recall that for $\rho$-weakly convex functions $\prox_{\gamma \phi}$ is single valued if and only if $\gamma^{-1}-\rho>0$. We can rewrite the minimization problem of $\phi$ as 
\begin{equation*}
    \argmin_{x\in \Hi} \phi(x) = \argmin_{x\in\Hi} - \frac{\rho}{2}\Vert x \Vert^2 + \phi(x) + \frac{\rho}{2}\Vert x \Vert^2
\end{equation*}
which is the sum of a $\rho$-smooth function and a convex function, which can in turn be minimized starting from $x_0 \in \Hi$ by
\begin{equation}
    x_{k+1} = \prox_{\gamma \left(\phi + \frac{\rho}{2}\Vert \cdot \Vert^2\right)} \left(x_k - \gamma \rho x_k \right),
\end{equation}
where $0<\gamma<1/\rho$. It also means that one can replace the estimation of the proximal operator of a weakly convex function by the one of a convex function, provided that the weakly convex constant is known to adjust the scaling in the approximation.

\end{appendices}
\end{document}